\numberwithin{equation}{section}
\newcommand{\U}{\mathbb{U}}
\newcommand{\Lower}[2]{\smash{\lower #1 \hbox{#2}}}
\newcommand{\ben}{\begin{enumerate}}
\newcommand{\een}{\end{enumerate}}
\newcommand{\bi}{\begin{itemize}}
\newcommand{\ei}{\end{itemize}}
\newcommand{\E}{\mathbb{E}}
\newcommand{\MtP}{\mathrm{MtP}}
\renewcommand{\vec}[1]{\boldsymbol{#1}}
\newcommand{\vect}[1]{\bm{#1}}
\theoremstyle{plain}
\newtheorem{thm}{Theorem}[section]
\newtheorem{prop}{Proposition}[section]
\newtheorem{lem}{Lemma}[section]
\newtheorem{cor}{Corollary}[section]
\theoremstyle{definition}
\newtheorem{rem}{Remark}[section]
\newtheoremstyle{example}{\topsep}{\topsep}%
     {}
     {}
     {\rmfamily}
     {}
     {\newline}
     {\thmname{#1}\thmnumber{ #2}\thmnote{ #3}}
\theoremstyle{example}
\newtheorem{exam}{Example}[subsection]
\newtheorem{setup}{Setup}
\begin{document}

\begin{frontmatter}
\title{Coagulation-Fragmentation Duality of Infinitely Exchangeable Partitions from Coupled Mixed Poisson Species Sampling Models}
\runtitle{Coupled Mixed Poisson Duality}

\begin{aug}
\author[A]{\fnms{Lancelot}~\snm{F. James}\ead[label=e1]{lancelot@ust.hk}}
\address[A]{Department of ISOM, HKUST\printead[presep={,\ }]{e1}}
\end{aug}

\begin{abstract}
We generalize the celebrated coagulation-fragmentation duality of Jim Pitman~\cite{Pit99Coag}, originally established for the PD($\alpha$,$\theta$) laws of Pitman and Marc Yor~\cite{PY97}, resolving a problem open for over two decades. Our framework extends the duality to processes driven by arbitrary non-negative L\'evy subordinators and, for the first time, to multi-group settings with coupled dynamics. The solution is a novel four-component system built from the Poisson Hierarchical Indian Buffet Process (PHIBP), a framework developed for modeling complex microbiome species sampling~\cite{hibp25}, which circumvents intractable analysis on traditional partition spaces.

The architecture of this construction embeds naturally within Bertoin's~\cite{BerFrag} continuous-time fragmentation framework, resolving a foundational impasse he highlighted~\cite[Ch. 4 p. 213]{BerFrag}: where time-reversal fails, we introduce \emph{simultaneous structural duality}, in which fragmentation and coalescence evolve in physical time while maintaining a pointwise dual relationship via coupled subordinators. This opens new modeling avenues for complex ancestral processes such as recombination graphs. The compound Poisson representation underlying the construction provides the first tractable path to explicit joint EPPF laws for a broad class of coupled processes, enabling exact simulation. Section~7 develops the \emph{cloud duality principle} via the h-biased L\'evy--It\^o coupled duality constructor, extending the framework to arbitrary Polish spaces and demonstrating that the duality arises from point-process regrouping rather than features specific to interval partitions.
\end{abstract}

\begin{keyword}[class=MSC]
\kwd[Primary ]{60G09} 
\kwd{60J70}           
\kwd[; secondary ]{60G51} 
\kwd{92D25}           
\kwd{60G57}           
\end{keyword}

\begin{keyword}
\kwd{Ancestral Recombination Graphs}
\kwd{Coagulation-Fragmentation Duality}
\kwd{Coupled Mixed Poisson Process}
\kwd{Exchangeable Partitions}
\kwd{L\'evy subordinators}
\kwd{Simultaneous Structural Duality}
\end{keyword}

\end{frontmatter}
\tableofcontents

\section{Introduction}
The study of exchangeable random partitions, which provide a mathematical framework for modeling the clustering of data, has deep roots in population genetics, Bayesian statistics, and the theory of stochastic processes. A cornerstone of this field is the two-parameter Poisson-Dirichlet family as developed and discussed by Jim Pitman and Marc Yor~\cite{PY97} and its remarkable coagulation-fragmentation duality, discovered by Pitman~\cite{Pit99Coag}. Extending this elegant static duality to processes driven by more general non-negative L\'evy subordinators—beyond the special stable subordinator beta-gamma algebra underlying the two-parameter family—has remained a significant open problem for over two decades. Moreover, extending such duality to multi-group settings with $J \geq 1$ structured populations has, to our knowledge, never been achieved even for the classical two-parameter case.

Beyond this static challenge lies an equally fundamental question about continuous-time dynamics. The profound utility of Pitman's duality is most evident in this setting---though its applications are far wider, as detailed in, e.g.,~\cite{Haas2,HoJamesLau2025,Pit06,Wood}---where it provides the generative logic for seminal processes ranging from the Bolthausen-Sznitman coalescent to cascades in spin-glass theory. This very success, however, exposes a deep-seated obstacle. As Bertoin highlights in the direct context of discussing Pitman's~\cite{Pit99Coag} duality~\cite[See p. 213]{BerFrag}, while the structural similarities between fragmentation and coalescence ``might suggest that these two classes of processes should be related by time-reversal,'' in reality, ``the situation is far less simple.'' The reason for this impasse is a fundamental asymmetry: the natural mechanism for such a link, subordination, preserves the structure of coalescents but fails to preserve the essential branching property of fragmentations. This has left the full dynamic potential of fragmentation-coalescence duality largely untapped, creating a major open question for extending these elegant relationships beyond the specific parameter-encoded dynamics of the classical two-parameter case.

This paper resolves both challenges—extending Pitman's duality to arbitrary subordinators and, for the first time, to multi-group settings with coupled dynamics across $J$ populations—and providing a framework for utilizing this duality in continuous time through a fundamentally new perspective. Rather than pursuing the time-reversal route that the classical duality might suggest, we introduce a novel notion of \emph{simultaneous structural duality}: at each fixed time $t$, fragmentation and coalescence dynamics coexist and maintain their dual relationship, with one process fragmenting forward while its dual coalesces backward at the same instant. This is achieved through a constructive approach built upon a hierarchical mixed Poisson perspective: we introduce, for the first time, a four-component coupled dynamic system in which the static, fixed-time partitions—the traditional object of study—are revealed to be merely a single projection of a far richer, dynamic structure.

The hallmark of this approach and its core contribution is twofold. First, it sidesteps the notoriously complex and often obscure analysis required on spaces of mass or integer partitions. Instead of confronting this complexity directly, our framework shifts the problem to an explicitly constructed four-component system where the dynamics are transparent. Second, it provides the first demonstration of how Pitman-type duality can be leveraged in continuous time through pointwise coupling rather than time-reversal, yielding concrete modeling capabilities unavailable in previous frameworks. Once this shift in perspective is made, the desired duality properties follow as a natural consequence of the architecture. 
This system simultaneously defines: (i) the fine-grained partition, (ii) its coagulation operator, (iii) a forward-in-time system of coupled, time-homogeneous fragmentation processes in the sense of Bertoin~\cite{BerFrag}, and (iv) a dual, backward-in-time structured coalescent. All four components are governed by the same underlying compositional structure, which yields their exact compound Poisson representations. By placing these objects on a common probability space with explicit joint laws, the construction naturally extends the duality in two fundamental directions: first, to arbitrary driving subordinators, and second, to the previously uncharacterized multi-group setting.
To substantiate this claim and build our framework from first principles, this introduction will proceed as follows. First, we establish the crucial link between the mixed Poisson framework and the structure of exchangeable partitions, motivating why our approach is able to connect back to these more complex spaces (Section~\ref{sec:background}). Second, we detail the classical duality of Pitman that serves as our point of departure (Section~\ref{sec:pitman-duality}). With this context in place, we then introduce the general blueprint for our duality framework: a four-part coupled system that elevates the principles of the Poisson Hierarchical Indian Buffet Process (PHIBP)~\cite{hibp25}. We outline this blueprint and its main contributions in Section~(\ref{sec:phibp-framework}). We conclude with a discussion of the broader implications for time-dynamic processes—particularly the ability to observe coagulation and fragmentation phenomena simultaneously at every time point—and an outline of the paper (Section~\ref{sec:implications}).

To be clear, our primary contribution is Theorem~\ref{coupledsupertheorem} and its many implications. That is:

\begin{enumerate}
\item \textbf{The envisionment and construction of the four-part coupled system}. Part 1 of our core innovation is the coupled infinite sum representations of Theorem~\ref{coupledsupertheorem}, which corresponds to a specific PRM structure we term the \emph{L\'evy-It\^o coupled duality constructor}. Its architecture achieves two major goals: First, it establishes structural coupled duality frameworks across spaces of counts, mass, and integer partitions, while simultaneously constructing the continuous process-level bridges (e.g., $\Lambda$ Fleming-Viot processes) that realize these dualities dynamically. Second, and more broadly, it unlocks the extension of Pitman's duality to arbitrary subordinators and to multiple groups ($J \ge 1$). Section~\ref{sec:dualitymachine} presents a general blueprint for extensions to abstract Polish spaces using the h-biased framework in~\cite{PPY92,PY92}, where the duality is characterized through the \emph{cloud duality principle}---a geometry-versus-sampling dichotomy at the level of point clouds.

To clarify the power of this approach, consider an analogy to Lukacs's~\cite{Lukacs55CharGamma} classic characterization of the Gamma distribution. Given independent Gamma variables $X$ and $Y$, it is true that $X/(X+Y)$ is independent of $X+Y$ and hence random variables $S \overset{d}{=} X+Y$. However, this distributional statement is far weaker than the structural identity that holds on the original probability space:
\[
\frac{X}{X+Y} \times (X+Y) = X \quad \text{versus} \quad \frac{X}{X+Y} \times S \overset{d}{=} X.
\]

While many processes can be constructed to exhibit Pitman's fragmentation-coagulation duality between $\mathrm{PD}(\alpha, \theta)$ and $\mathrm{PD}(\beta, \theta)$ marginals in a distributional sense, our framework selects and generalizes the specific structural version of Pitman's duality (detailed for the classic case in Section~\ref{sec:tagged_fragmentation_example}) that preserves the duality pointwise on the underlying probability space. In the continuous-time setting, this distinction is paramount: it is precisely this structural feature that enables duality to operate in lock-step at each time point $t$. This gives rise to our concept of \textit{simultaneous structural duality}, which is crucial for a time-dynamic construction addressing points raised in Bertoin~\cite[Ch.~4, p.~213]{BerFrag}, and, more importantly, provides new continuous-time capabilities in both theoretical modelling and practical executable applications. This provides, for instance, a tractable new framework for the notoriously complex Ancestral Recombination Graph (ARG) problem, a point we elaborate on in Section~\ref{section1:Simtimefragcoag}.

\item \textbf{An explicit compound Poisson representation for this entire system}. This corresponds to Part 2 of our contribution and  provides a transparent, and arguably the only feasible, path to the calculation of joint EPPFs for the new classes of coupled processes. This delivers the explicit partition calculus—the hard formulas for the joint EPPFs—that represents a direct extension of Pitman's duality.

\item \textbf{A direct path to exact simulation and a deeper conceptual understanding}. The compound Poisson laws immediately yield algorithms for exact sampling, making these complex models practically executable. Furthermore, our mixed Poisson framework illuminates the "absolute rates" perspective—a natural yet often obscured component of species sampling models—connecting them to broader phenomena in combinatorial stochastic processes. This ability to execute complex computational inference procedures can be extrapolated from its demonstration in~\cite{hibp25} for a static microbiome analysis.
\end{enumerate}

Before embarking on this detailed development, we provide below a technical preview of the core construction and its continuous-time extension. Because our framework synthesizes ideas from several distinct areas—L\'evy processes, exchangeable partition theory, fragmentation-coalescence duality, mixed Poisson representations, and specialized tools developed in prior work, including the PHIBP framework~\cite{hibp25} that catalyzed this investigation—the full development necessarily introduces substantial background material that may initially obscure the central contributions. This preview serves to clarify what is achieved: it highlights the key architectural innovation (the four-component coupled system), demonstrates how it resolves both the static and continuous-time duality challenges, and provides a roadmap for readers navigating the technical layers that follow.

\subsection{A Four-Component Coupled System from Subordinators.}
As a preview to Theorem \ref{coupledsupertheorem}, we construct from $J+1$ independent L\'evy subordinators $(\sigma_0, \sigma_1, \dots, \sigma_J)$ a novel four-component coupled system of mixed Poisson variables for $j=1,\dots,J,$ $y\in[0,1]$ and $\gamma_{j}>0$:
\begin{equation} \label{coupledcoarsetofinevectorQUAD}
\begin{pmatrix} I_j(\gamma_j, y) \\ \mathscr{A}_j(\gamma_j, y) \\ (\mathscr{F}^{(\lambda_l)}_{j,l}(\gamma_j, y))_{l\ge 1} \\ Z_j(\gamma_j, y) \end{pmatrix}
:=
\begin{pmatrix}
\sum_{l=1}^{\infty}\sum_{k=1}^{\infty}\mathscr{P}_{j,k,l}(s_{j,k,l}\gamma_{j})\mathbb{I}_{\{U_{j,k,l}\leq y\}} \\
\sum_{l=1}^{\infty}\sum_{k=1}^{\infty}\mathbb{I}_{\{\mathscr{P}_{j,k,l}(s_{j,k,l}\gamma_{j})>0\}}\mathbb{I}_{\{Y_{l}\leq y\}} \\
\left(\sum_{k=1}^{\infty}\mathscr{P}_{j,k,l}(s_{j,k,l}\gamma_{j})\mathbb{I}_{\{U_{j,k,l}\leq y\}}\right)_{l\ge 1}\\
\sum_{l=1}^{\infty}\left[\sum_{k=1}^{\infty}\mathscr{P}_{j,k,l}(s_{j,k,l}\gamma_{j})\right]\mathbb{I}_{\{Y_{l}\leq y\}}
\end{pmatrix}
\end{equation}

where $\mathscr{P}_{j,k,l}(s_{j,k,l}\gamma_{j}) \sim \mathrm{Poisson}(s_{j,k,l}\gamma_{j})$ are independent Poisson variables and the jumps are based on the subordinator $\sigma_{0},$ up till time $t=1,$ $\sigma_{0}(1),$ acting as the main clock. More precisely we work with the multivariate process of composed subordinators $(\sigma_{j}\circ \sigma_{0}; j\in[J], \sigma_{0})$ with properties formalized in Section~\ref{sec:coupledmixedPoisson}.

Identifying this precise four-component structure is itself a central contribution of this work—there are no obvious indications a priori that this particular construction provides the necessary conceptual framework to formulate the correct duality object. Here, $I_j$ represents the fine partition (subspecies clusters), $\mathscr{A}_j$ tracks distinct blocks (species) appearing in $Z_j$, the collection $(\mathscr{F}^{(\lambda_l)}_{j,l})_{l \ge 1}$ provides independent fragmentation operators decomposing each coarse species $l$, and $Z_j$ aggregates to the coarse partition of species abundances. The construction—building these four components from the same Poisson atoms underlying the subordinators—yields coupled mass partitions and bridges satisfying Pitman-type duality on their respective measure spaces (Theorem~\ref{coupledsupertheorem}). 

Specifically, the construction is driven by the jumps of the underlying subordinators, which we interpret as a hierarchy of mean abundance rates. At the highest level, the jumps $(\lambda_l)_{l\ge 1}$ of $\sigma_0$ represent the global mean abundance rates for each potential species $Y_l$ across the meta-community. These global rates are then used to derive the local mean abundance rate for species $Y_l$ within group $j$, denoted $\sigma_{j,l}(\lambda_l)$. This local rate itself decomposes into a sum of finer-grained rates, $\sigma_{j,l}(\lambda_{l})=\sum_{k=1}^{\infty}s_{j,k,l}$, where the atoms $(s_{j,k,l})_{k\ge 1}$ are the jumps of $\sigma_j$ restricted to specific  intervals of length $\lambda_l$, and represent the abundance rates of subspecies markers $U_{j,k,l}$ associated with species $Y_l$ in group $j$.
This satisfies $\sigma_{j,l}(\lambda_{l})=\sum_{k=1}^{\infty}s_{j,k,l},$ hence  $Z_j(\gamma_j, y)=\sum_{l=1}^{\infty}\mathscr{\tilde{P}}_{j,l}(\sigma_{j,l}(\lambda_{l})\gamma_{j})\mathbb{I}_{\{Y_{l}\leq y\}}.$ Where $\mathscr{\tilde{P}}_{j,l}(\sigma_{j,l}(\lambda_{l})\gamma_{j}):=\sum_{k=1}^{\infty}\mathscr{P}_{j,k,l}(s_{j,k,l}\gamma_{j})\sim \mathrm{Poisson}(\sigma_{j,l}(\lambda_{l})\gamma_{j})$, denoting, unconditionally, a mixed Poisson distribution. 

This nested jump structure, combined with iid Uniform$[0,1]$ variables $(U_{j,k,l})$ and $(Y_{l})$, defines the complete generative system in \eqref{coupledcoarsetofinevectorQUAD}, with equivalent representations in Section~\ref{sec:coupledconstruct}. From this representation, the mean rates and species labels uniquely determine the coupled mass partitions $\mathbf{P}_j$, $\mathbf{Q}_{0}$, $\mathbf{Q}_{j,l},$ and $\mathbf{V}_j$, and their corresponding bridges, from which all duality properties follow. The explicit constructions for these mass partitions and bridges are detailed in Section \ref{sec:PKdualitylaws}, built from the representation in \eqref{coupledcoarsetofinevectorQUAD}. As demonstrated in Section \ref{sec:background}, this framework operates more generally from the thinned zero sets of the subordinators (see also Section \ref{sec:posterior_decomp}). 

The compound Poisson representation given in Theorem~\ref{coupledsupertheorem}, built upon the remarkable family of mixed truncated Poisson (MtP) distributions from \cite{James2017, hibp25, Pit97}, is the crucial mechanism that makes this blueprint analytically tractable. It is this representation, with the allocation $\mathscr{A}_j$ serving as the key conditioning variable, that unlocks the explicit joint EPPF calculations in Sections~\ref{Sec:jointEPPF}-\ref{subsec:marginalized_duality}—a task that would otherwise be intractable. This achievement provides for coupled systems the kind of explicit joint EPPFs that Pitman~\cite{Pit99Coag} established for the Poisson-Dirichlet process, thereby translating an abstract duality framework into a constructive engine for (joint) likelihood-based analysis of complex systems, with direct consequences for key problems in population genetics (such as the analysis of Ancestral Recombination Graphs~(ARG) and multi-type processes), and with clear extensions to community ecology, microbiome analysis, and other fields reliant on hierarchical, count-based rate models

\begin{rem}
An important feature of our framework: because we allow arbitrary subordinators which may take the value zero, the mixed Poisson constructs remain well-defined and robust, whereas division by zero on such events renders mass partitions and bridges undefined. This demonstrates a concrete advantage of the mixed Poisson perspective beyond mere computational convenience, as demonstrated in our development of robust microbiome models in~\cite{hibp25} that avoid the pitfalls of compositional approaches (relative rates). See Section~\ref{sec:background} for more on these relations and calculations of EPPFs. 
\end{rem}

\subsection{Simultaneous Fragmentation-Coalescence: A New Form of Continuous-Time Duality}\label{section1:Simtimefragcoag}
As shown in Section~\ref{sec:DynamicCouplin}, by replacing $\sigma_{0}(1)$ with $(\sigma_{0}(t):t\ge 0),$ and invoking Bertoin's~\cite{BerFrag} homogeneous fragmentation~(HFP) framework,  the construction elevates seamlessly to continuous time, yielding coupled $\Lambda$-Fleming-Viot processes, structured $\Lambda$-coalescents~\cite{DonnellyKurtz1999, Pit99Coag, sagitov1999}, and homogeneous fragmentation processes (HFPs). The critical innovation—achievable only through pointwise coupling of the underlying subordinators—is that Pitman-type duality is preserved simultaneously at each time $t$, not merely in distribution. This addresses the problem noted by Bertoin~\cite[Ch 4, p.213]{BerFrag} through a novel conceptual framework: unlike previous uses of the duality result in the purely stable case (\cite{BerFrag,BerLegall00,Pit99Coag,Pit06}), which exploit parameter variation (e.g., $\alpha = e^{-t}$) to encode time evolution in the context of Bolthausen-Sznitman/U-coalescent dynamics and flows of bridges, our approach treats time as an independent physical parameter, enabling continuous-time dynamics where both fragmentation and coalescence processes evolve simultaneously in real time while maintaining pointwise duality at each instant.

Consider the coupled pair $(I_\alpha(\sigma_{\beta/\alpha}(t)), Z_\beta(t))$ for $0 < \beta < \alpha < 1$. The process $I_\alpha(\sigma_{\beta/\alpha}(t))$ arises from time-changing an $\alpha$-stable Homogeneous Fragmentation Process (HFP) with an independent $\beta/\alpha$ stable subordinator. By construction, this process yields a $\mathrm{PD}(\alpha,0)$ partition at each time $t,$ and its total mass evolves as the composition $\sigma_{\alpha}(\sigma_{\beta/\alpha}(t)).$ Its dual, $Z_\beta(t)$, is constructed from the composed subordinator $\sigma_{\beta} = \sigma_{\alpha}\circ\sigma_{\beta/\alpha}$ and therefore evolves marginally as a $\beta$-stable HFP.

Let $(\Pi_{I,m}(t), \Pi_{Z,m}(t))$ denote the respective partitions of $[m]=\{1,\dots,m\}$ at time $t$. We adopt the notation where $\pi \preceq \pi'$ means “$\pi$ is a coarser partition than $\pi'$.” The coupling is constructed to satisfy, for all $t \ge 0$ and $s \ge 0$:

\begin{align} \label{eq:ordering}
  \Pi_{Z,m}(t) &\preceq \Pi_{I,m}(t) && \text{(duality: coarse-fine nesting at time $t$)} \\
  \Pi_{Z,m}(t) &\preceq \Pi_{Z,m}(t+s) && \text{(fragmentation: $Z$ forward in time)} \nonumber \\
  \Pi_{I,m}(t) &\preceq \Pi_{I,m}(t+s) && \text{(fragmentation: $I$ refining forward)} \nonumber
\end{align}

The second and third inequalities confirm that both processes are fragmentations, becoming progressively finer as time $t$ increases. The first inequality establishes the crucial duality: at any fixed moment in time, the partition of the $Z$ process is a coarsening of the partition of the $I$ process. Critically, these relationships hold in lock-step: as time progresses, both $I$ and $Z$ fragment simultaneously, yet the duality $\Pi_{Z,m}(t) \preceq \Pi_{I,m}(t)$ persists at every instant. Consequently, because $Z_\beta(t)$ is a $\beta$-stable HFP—a fragmentation process forward in time—it is equivalent to a familiar coalescent process (specifically, a $\mathrm{Beta}(2-\beta, \beta)$-coalescent) when viewed backward in time.

This duality provides a principled and coherent framework for ancestral inference, particularly relevant for population genetics. Consider a scenario where the coarse process $Z_\beta(t)$ represents the genealogy at a single genetic locus, whose time-reversal is a standard Beta-coalescent. Let the finer process $I_\alpha(\sigma_{\beta/\alpha}(t))$ represent the full Ancestral Recombination Graph (ARG)—whose modern inferential treatment owes much to the seminal work of Griffiths and Marjoram\cite{griffiths1997ancestral, hein2005gene}—a notoriously complex object from across a genetic region. Although there are methods to analyze these objects separately, their joint distribution has been difficult to characterize and exploit.

Our framework provides a tractable structure for this joint distribution. The lock-step coupling, which ensures $\Pi_{Z,m}(t) \preceq \Pi_{I,m}(t)$ at all times $t$, creates a formal conditional relationship. For instance, conditioning on a realization of the marginal genealogy $Z_\beta$ that exhibits a recent coalescence time for two lineages immediately prunes the vast state space of possible recombination histories for the ARG $I_\alpha(\sigma_{\beta/\alpha}(\cdot))$. This simplification of the conditional structure makes the joint likelihood more tractable, paving the way for more efficient simulation and parameter estimation algorithms. This ability to rigorously leverage the marginal process to constrain the full ancestral graph represents a significant advance over static or discretely-linked models. Crucially, our compound Poisson representations deliver the explicit joint EPPF formulas that enable—for the first time in this general setting—both direct parametric inference and exact simulation. Furthermore, this foundation makes it possible to adapt more sophisticated computational inference schemes to the continuous-time domain, as has been demonstrated for related static microbiome models~\cite{hibp25}.

As indicated, this simultaneous observation of dual dynamics—impossible in parameter-encoded time frameworks—enables unique modeling capabilities where both processes can be observed and utilized at each instant. Furthermore, our construct allows richer modelling capabilities using our full 4-point coupled system that can be represented as $(I_{j}, \mathscr{A}_{j}, (\mathscr{F}^{(\lambda_l)}_{j,l})_{l \ge 1}, Z_j, j\in [J])([0,t]).$ The pattern extends to arbitrary subordinators (Section~\ref{sec:DynamicCouplin}): pointwise coupling of subordinators induces pointwise coupling of the associated continuous-time partition processes, preserving duality structure at each $t$. Our approach also introduces to the continuous-time literature an effective mean-abundance–rates formulation via mixed Poisson jumps—as opposed to relative rates via mass partitions—bringing to continuous time the flexibility that PHIBP provides in static settings~\cite{hibp25}. 

\subsection{Exchangeable Partitions and the Mixed Poisson Process Perspective}\label{sec:background}
Mass partitions \(\mathbf{V} = (V_{l})_{l\ge 1}\) on the space 
$\mathcal{P}_{\infty} := \{ (p_k)_{k \ge 1} : p_1 > p_2 > \dots > 0, \sum_{k=1}^{\infty} p_k = 1 \}$
and their corresponding exchangeable random partitions of \(\mathbb{N}\) play important roles in theoretical and applied probability—notably excursion theory, Bayesian statistics, population genetics, and machine learning. Often referred to as Kingman's paint-box construction, these infinitely exchangeable random partitions are derived from conditionally independent and identically distributed (i.i.d.) sampling from a random bridge \(G(y) = \sum_{l=1}^{\infty} V_{l} \mathbb{I}_{\{Y_{l}\leq y\}}\) for \(y\in [0,1]\), where \((Y_{l})_{l \ge 1} \overset{\text{iid}}{\sim} \mathrm{U}[0,1]\) denotes a sequence of i.i.d. Uniform[0,1] random variables, independent of \(\mathbf{V}\), as articulated in the works of \cite{Aldous,BerFrag,kingman1975,Pit02,Pit06}. Here and throughout, we adopt the convention that variables denoted by \(Y\), \(U\), or their variants (e.g., \(\tilde{Y}_j\), \(U_{j,k}\)) represent i.i.d. random variables drawn from a $\mathrm{Uniform}[0,1]$ distribution, often serving as unique latent tags for species or clusters.

The law of the restrictions of such a partition to the first \(n\) integers is expressed in terms of an exchangeable partition probability function (EPPF), say \(\mathrm{p}\). For any \(n \ge 1\), let $(\hat{Y}_{1},\ldots,\hat{Y}_{n})$ denote an exchangeable sample of size $n$ from $G$. This sampling mechanism produces a partition of \([n] := \{1, \dots, n\}\) into \(K_n\) blocks, say \((N_1, \dots, N_{K_n})\). If there are \(K_n=r\) unique values $(\tilde{Y}_1, \dots, \tilde{Y}_r)$ in the first \(n\) samples from \(G\), the blocks are formed by \(N_j = \{i \in [n] : \hat{Y}_i = \tilde{Y}_j\}\). For a realization with block sizes \(|N_j| = n_j\) such that \(\sum_{j=1}^{r} n_j = n\), the joint probability is given by the EPPF:
\begin{equation}
\label{EPPF}
\mathbb{P}(\Pi_{n}=(N_{1},\ldots,N_{r}))=\mathrm{p}(n_{1},\ldots,n_{r}).
\end{equation}
By convention, blocks are typically listed according to the order of their least elements. Further details on consistency properties can be obtained from the cited works. Our approach employs a special sub-class derived from the larger family of finite Gibbs partitions, described in detail in~\citep[Chapter 1]{Pit06}. For a fixed \(n\), a general finite Gibbs EPPF, denoted by \(p^{[n]}\), where "Gibbs" implies a product structure, is connected to the work of Kolchin~\citep{Kolchin}. This connection is detailed in~\cite[Theorem 1.2, p. 25]{Pit06}. 

We now connect to our setup via the case \(J=1\), using \(Z(\gamma_{1}, y)\) from~\eqref{coupledcoarsetofinevectorQUAD}. Since a composed subordinator \(((\sigma_{1} \circ \sigma_{0})(s): 0 \leq s \leq 1)\), is simply a subordinator, \(Z\) is a familiar one-dimensional mixed Poisson process with mean rates of abundance \(z_{l} = \sigma_{1,l}(\lambda_{l})=\sum_{k=1}^{\infty}s_{1,k,l}\), where the jumps of the composed subordinator are not ranked, and each jump is associated with a species \(Y_{l}\).  Thus, we are in the setting articulated by Pitman~\cite{PitmanPoissonMix}, which relates to species sampling models of Fisher and McCloskey.  Now, assuming \(\sigma_{1}(\sigma_{0}(1)) > 0\), we may form mass partitions \(\mathbf{V} = \left(V_{l} = z_{\pi(l)}/\sigma_{1}(\sigma_{0}(1))\right)_{\{l \geq 1\}}\), where \(\pi(l)\) denotes the ranked ordering of the components. Hence, \(G\) with atoms \(Y_{l}\) corresponds to the related bridge. 

\begin{rem}
This demonstrates how our system represented in~\eqref{coupledcoarsetofinevectorQUAD} builds mass partitions such as $\mathbf{V}$ and coupled counterparts as described in~Theorem~\ref{coupledsupertheorem}.
\end{rem}

For $n > 0$, conditioning $\mathbf{V}$ or $G$ on $Z(\gamma_{1}, 1) = n$ leads to a proper conditional distribution with law denoted by $\mathbb{P}^{[n]}(\gamma_{1}),$ notationally suppressing dependence on the L\'evy density of $\sigma_{1}\circ\sigma_{0}.$ Now crucially, $Z(\gamma_{1}, y)$ admits a compound Poisson representation
$$\sum_{\ell=1}^{\varphi} \mathscr{F}^{(H_\ell)}_{1,\ell}(\gamma_{1}, 1)\mathbb{I}_{\{\tilde{Y}_{\ell}\leq y\}}$$
where $\varphi$ is Poisson-distributed, the terms are iid, and $\mathscr{F}^{(H_\ell)}_{1,\ell}(\gamma_{1}, 1)$ represents the count of species of type $\tilde{Y}_{\ell}$ at time $\gamma_{1}$. Furthermore, $\mathscr{F}^{(H_\ell)}_{1,\ell}(\gamma_{1}, 1)$ follow common mixed truncated Poisson (MtP) distributions. It follows that the conditional EPPF $p^{[n]}(n_{1},\ldots,n_{r}|\gamma_{1})$ of $\mathbf{V}|Z(\gamma_{1},1)=n$ can be expressed in terms of finite Gibbs partition blocks in exchangeable order:
\begin{equation}
\label{KolchinfiniteGibbs}
\frac{n!}{r! \prod_{l=1}^{r} n_{l}!} \, \mathrm{p}^{[n]}(n_{1}, \ldots, n_{r}|\gamma_{1}) = 
\mathbb{P}\left(
(\mathscr{F}^{(H_\ell)}_{1,\ell}(\gamma_{1}, 1) = n_{\ell};\ell \in [r]),\varphi = r \, \middle| \, 
\sum_{\ell=1}^{\varphi} \mathscr{F}^{(H_\ell)}_{1,\ell}(\gamma_{1}, 1) = n
\right).
\end{equation}
where $Z(\gamma_{1},1)\overset{d}=\mathscr{P}(\sigma_{1}(\sigma_{0}(1))\gamma_{1}),$ a mixed Poisson process at time $\gamma_{1}.$
Hence, we will work with joint laws of \((\mathscr{F}^{(H_\ell)}_{1,\ell}(\gamma_{1}, 1) = n_{\ell};\ell \in [\varphi]),\varphi \) and the corresponding law of the n-th arrival time of the mixed Poisson process say $\Gamma_{n}/\sigma_{1}(\sigma_{0}(1))$ is expressed as 
\begin{equation}
\label{jointNT}
\frac{n!}{r!\prod_{l=1}^{r}n_{l}!} \mathrm{p}^{[n]}(n_{1},\ldots,n_{r}|\gamma_{1})\mathbb{P}(\frac{\Gamma_{n}}{\sigma_{1}(\sigma_{0}(1))}\in d\gamma_{1})
\end{equation}
where,
$$
\mathbb{P}(\frac{\Gamma_{n}}{\sigma_{1}(\sigma_{0}(1))}<\infty)=\int_{0}^{\infty}\frac{n}{\gamma}\mathbb{P}(\mathscr{P}(\sigma_{1}(\sigma_{0}(1))\gamma_{1})=n)d\gamma=\mathbb{P}(\sigma_{1}(\sigma_{0}(1))>0)>0.
$$
 In the following remark we highlight how to obtain the unconditional EPPF of $\mathbf{V}.$ 

\begin{rem}[Recovery of Unconditional EPPFs]
\label{Remark:PitEPPFadjustment}
By integrating out over the distribution of 
$\frac{\Gamma_{n}}{\sigma_{1}(\sigma_{0}(1))}$ we recover the EPPF corresponding to that of the unconditional distribution of the mass partitions $\mathbf{V}$. Crucially this sets up the precise bridge between mass partitions, their infinitely exchangeable random partitions of $\mathbb{N}$, and corresponding mixed Poisson processes. We note further that we are not requiring the stronger condition of $\mathbb{P}(\sigma_{1}(\sigma_{0}(1))>0)=1$ which is the case for infinite activity subordinators that admit proper probability densities, such as the case for general Poisson-Kingman models. To allow for summation to one, i.e. a proper EPPF we simply follow Pitman's~\citep[Proposition 6.1]{PitmanPoissonMix} and divide by $\mathbb{P}(\sigma_{1}(\sigma_{0}(1))>0)$. Or more precisely for our general framework $\mathbb{P}(\sigma_{j}(\sigma_{0}(1))>0;\forall j\in[J])$.
\end{rem}

This mechanism, albeit without true recognition of its underlying principle, is clearly exhibited in~\cite[Propositions 1 and 3]{JLP2}, where the authors derive EPPFs, conditional and unconditional, corresponding to those derived in~\cite{Pit02} for Poisson-Kingman models. Those same derivations and more are discussed in~\citep[Sections~5 and 8]{James2002}, where this author attributes learning about and employing the \textit{gamma trick} in more broad contexts through the works of Pitman and Yor~\cite{PY92,PY97,PY2001} and Tsilevich, Vershik, and Yor~\cite{tsilevich2000distinguished} (see also, for example, \citep[p.~27, Eq.~(2.t)]{PPY92}). Pitman’s work, attributing this to a missing or hidden component of species sampling modelling due to a framework of Fisher/McCloskey, provides a formal interpretation in terms of conditioning on the number of animals/species/customers, n, tagged/arriving at a specified time according to a mixed Poisson process or, equivalently, conditioning on the corresponding \(n\)-th arrival time. This results in the law \(\mathbb{P}^{[n]}\) as employed formally with Pitman’s interpretation in recent works such as~\citep{JamesStick} with many pertinent details for our present setting, \citep[Section~2.2.1]{HJL2}, and \citep[Section~6]{HoJamesLau2025}. These principles have been a key influence on the centerpiece of the present work on coagulation and fragmentation, the Poisson-Hierarchical Indian Buffet Process (PHIBP)~\cite{hibp25}; in the development of the PHIBP, however, the influence stems more from the overarching interpretations in~\cite{PitmanPoissonMix} rather than from the explicit use of the laws \(\mathbb{P}^{[n]}\). We now can  give a full overview of the implications of our results in regard to identification of pieces for associated structured  coupled process constructions exhibiting duality, and transparent calculations of joint partition distributions, which are otherwise opaque from other perspectives.

\begin{rem}[Architectural Strategy]\label{archstrat}
The construction of the four-component coupled mixed Poisson system addresses a central challenge in this work. As demonstrated in Section~\ref{sec:background}, this system, with one representation in \eqref{coupledcoarsetofinevectorQUAD}, provides the representation linking our coupled construct to mass partitions, bridges, and other quantities on their respective spaces that are induced to exhibit a structural form of Pitman-type duality over $J \ge 1$ groups. A crucial and subtle aspect of this architecture is its inherently dynamic nature. This is not merely a static correspondence between partition structures, but a structural coupling that is realized simultaneously in continuous time, as detailed in Section~\ref{sec:DynamicCouplin}. This dynamic perspective is essential, as the general form of this duality cannot be recovered through classical time-reversal arguments. However, from the perspective of joint partition calculus, this correspondence establishes a 'soft duality'—a conceptual linkage not yet enabling concrete calculations of joint EPPFs. The pathway to explicit computation is made accessible through the coupled compound Poisson representation in Theorem~\ref{coupledsupertheorem}, which enables the derivations in Sections~\ref{sec:posterior_decomp} through \ref{sec:finetocoarse}. These leverage analytical results from \cite{hibp25} concerning the coarse partition and fragmentation operators, culminating in the joint EPPF computations of Sections~\ref{Sec:jointEPPF} and \ref{subsec:marginalized_duality}. See, for instance, Proposition~\ref{prop:conditional_law_of_fragments} for the more general extension of \eqref{KolchinfiniteGibbs}, which is necessarily more complex but obeys the same principles.
\end{rem}

\subsection{Duality for the Two-Parameter Poisson-Dirichlet Distribution}\label{sec:pitman-duality}
The two-parameter Poisson-Dirichlet distribution $\mathrm{PD}(\beta,\theta)$ for $0\leq \beta<1, \theta>-\beta$ is the most distinguished class of laws on $\mathcal{P}_{\infty}$. Its origins lie in excursion theory, developed primarily by Pitman and Yor~\cite{BPY,PY92,PY97} and Perman, Pitman and Yor~\cite{PPY92}, who derived the tractable $\mathrm{GEM}(\beta,\theta)$ distributions via size-biased sampling. This class has gained considerable importance across fields~\cite{CraneEwens}. Ishwaran and James~\citep{IJ2001, IJ2003} termed the framework the ``Pitman-Yor process,'' denoted $G_{\beta,\theta}$, which saw prolific usage in Bayesian statistics and machine learning. The special case $G_{0,\theta}$ is the Dirichlet process~\cite{Ferg1973}, with underlying mass partition $\mathbf{V} \sim \mathrm{PD}(0,\theta)$, central to population genetics~\cite{Pit96, TavareEwens}.

The EPPF from sampling $G_{\beta,0}$ for a partition of $[n]$ into $K^{[\beta]}_{n}=r$ blocks is
\begin{equation}
p_{\beta}(n_1,\ldots,n_{r}):=\frac{\beta^{r-1}\Gamma(r)}{\Gamma(n)}\prod_{j=1}^{r}(1-\beta)_{n_{j}-1},
\label{canonEPPF1}
\end{equation}
where $(x)_n = {\Gamma(x+n)}/{\Gamma(x)}$ is the Pochhammer symbol. For $G_{\beta,\theta}$, the EPPF is
\begin{equation*}
p_{\beta,\theta}(n_1,\ldots,n_{r})=\frac{\Gamma(n)\Gamma(\frac{\theta}{\beta}+r)\Gamma(\theta+1)}{\Gamma(r)\Gamma(\frac{\theta}{\beta}+1)\Gamma(\theta+n)}p_{\beta}(n_1,\ldots,n_{r}).
\end{equation*}
Furthermore, $\mathbb{P}_{\beta,0}(K^{[\beta]}_{n}=k)=\mathbb{P}^{(n)}_{\beta}(k)=\mathbb{P}_{\beta,0}^{(n)}(k)=\frac{\beta^{k-1}\Gamma(k)}{\Gamma(n)} S_\beta(n,k)$, with $S_\beta(n,r) =  \frac{1}{\beta^r k!} \sum_{j=1}^r (-1)^j \binom{r}{j} (-j\beta)_n$ denoting the generalized Stirling number, as described in~\cite[eq(3.19) p. 66]{Pit06}.

Pitman's~\cite{Pit99Coag} celebrated coagulation-fragmentation duality connects partitions governed by $\mathrm{PD}(\alpha,\theta)$ and $\mathrm{PD}(\beta,\theta)$ for $0 \leq \beta < \alpha < 1$. Given $\mathbf{V} \sim \mathrm{PD}(\beta,\theta)$, fragmenting each mass via i.i.d.\ $\mathbf{Q}^{(l)} \sim \mathrm{PD}(\alpha, -\beta)$ yields $\mathbf{P} = \mathrm{Frag}_{\alpha,-\beta}(\mathbf{V}) \sim \mathrm{PD}(\alpha,\theta)$. Conversely, merging $\mathbf{P} \sim \mathrm{PD}(\alpha,\theta)$ via $\mathbf{Q}_{0} \sim \mathrm{PD}(\frac{\beta}{\alpha}, \frac{\theta}{\alpha})$ yields $\mathbf{V} = \mathrm{Coag}_{\beta/\alpha, \theta/\alpha}(\mathbf{P}) \sim \mathrm{PD}(\beta,\theta)$.

While laws on $\mathcal{P}_{\infty}$ are informative, Kingman's correspondence provides a more tangible description via EPPFs on integer partitions of $[n]$. 

\begin{setup}[Canonical Partition Duality Framework]
\label{setup:canonical_duality}

\textbf{Coagulation:} Start with $\pi^{(1)}=(C_1, \ldots, C_K)$ from $p_{\alpha,\theta}$ with counts $(c_{1},\ldots,c_{K})$. Draw partition $(X_1, \ldots, X_r)$ of $[K]$ from $p_{\beta/\alpha, \theta/\alpha}$ with counts $(x_{1},\ldots,x_{r})$. Merge blocks: $N_l = \bigcup_{j \in X_l} C_j$. Result: $\pi^{(2)}=(N_1, \ldots, N_r)$ follows $p_{\beta,\theta}$.

\textbf{Fragmentation:} Start with $\pi^{(2)}=(N_1, \ldots, N_r)$ from $p_{\beta,\theta}$. Fragment each $N_l$ via $p_{\alpha,-\beta}$ into sub-blocks $(C_{l,1}, \ldots, C_{l,x_{l}})$. Result follows $p_{\alpha,\theta}$.

The duality~\citep[Theorem 12]{Pit99Coag} is:
\begin{equation}
\label{Pitmanduality}
    p_{\frac{\beta}{\alpha},\frac{\theta}{\alpha}}(x_{1},\ldots,x_{r}) \times p_{\alpha,\theta}(c_{1},\ldots,c_{K})
= \prod_{l=1}^{r} p_{\alpha,-\beta}(c_{l,1},\ldots,c_{l,x_{l}})\times p_{\beta,\theta}(n_{1},\ldots,n_{r}).
\end{equation}
\end{setup}

For bridges, coagulation is subordination:
\begin{equation}
G_{\beta,\theta}(y) \overset{d}{=} F_{\alpha,\theta}\left(G_{\frac{\beta}{\alpha},\frac{\theta}{\alpha}}(y)\right), \quad y \in [0,1],
\label{PYcoagdual}
\end{equation}
with fragmentation as~\citep[Chapters 4--5]{BerFrag}:
\begin{equation}
G_{\alpha,\theta}(y) \overset{d}{=} \sum_{l=1}^{\infty} V_{l}F^{(l)}_{\alpha,-\beta}(y),
\label{PYfragdual}
\end{equation}
where $(F^{(l)}_{\alpha,-\beta})_{l\ge 1}$ are i.i.d.\ $\mathrm{PD}(\alpha,-\beta)$ bridges independent of $\mathbf{V} \sim \mathrm{PD}(\beta,\theta)$.

As noted in~\citep[Section 6]{Pit99Coag}, these descriptions offer practically no clues for extensions. The required independence is an artifact of the stable-beta-gamma algebra~\cite{PY97}.

For the PHIBP case ($J=1$), we obtain tractable joint EPPFs:
$p_{\text{coag}}\times p_{\text{fine}}=p_{\text{frag}}\times p_{\text{coarse}}$,
maintaining the same notation and constructions.

\subsection{A General Blueprint for Duality: The PHIBP Framework}\label{sec:phibp-framework}
We extend Pitman's~\cite{Pit99Coag} coagulation-fragmentation duality for coupled mass partitions $(\mathbf{V},\mathbf{P})$ following $(\mathrm{PD}(\beta,\theta),\mathrm{PD}(\alpha,\theta))$ to arbitrary subordinators and multi-group settings ($J \geq 1$). Our results apply to Poisson-Kingman distributions~\citep{BerFrag,JLP2,kingman1975,PPY92,Pit02,Pit06} and more generally (Remark~\ref{Remark:PitEPPFadjustment}). The central innovation reveals a dynamic perspective latent within the PHIBP framework developed in~\cite{hibp25}. While that work established PHIBP as a species sampling model for hierarchical count data, we show its underlying random measures implicitly define a fully coupled dynamic system. The key is the Allocation process, which provides the conditioning variable connecting fine and coarse partitions to their dual operators. We formalize this structure in Theorem~\ref{coupledsupertheorem}, elevating PHIBP from a law on partitions to a four-component system: fine partition, coagulation operator, forward-time fragmentation process, and backward-time structured coalescent.
This provides a complete blueprint. For $J=1$, we obtain explicit analogues of Pitman's $\mathrm{PD}(\alpha,\theta)$ duality for arbitrary subordinators. For $J>1$, we enter new territory—no precedent exists even for the PD case. The duality is governed by conditionally independent operators in the Fine-to-Coarse generative framework (the "Poissonized world"), from which complex marginal dependencies of Coarse-to-Fine laws emerge through integration.

\subsection{Broader Implications and Outline}\label{sec:implications}

The static duality laws at $t=1$ are marginals of a new class of continuous-time processes. The four components extend to coupled, time-homogeneous fragmentations~\citep{BerFrag} preserving duality at every instance, dually generating structured coalescents with simultaneous across-group mergers. For population genetics, this induces enriched simultaneous structual duality for foundational $\Lambda$-coalescents~\citep{Johnston2023, Pit99Coag} and subdivided population models~\citep{mohle2024multi, Taylor2009}, while extending to correlated merger-migration events from punctuated demographic histories~\citep{PraEtheridge2025Deme}. For microbiomes, it provides temporal dynamics of species assembly across coupled communities. To achieve tractable duals comparable to~\cite{Pit99Coag}, we identify four coupled components, as seen in one form in ~\eqref{coupledcoarsetofinevectorQUAD}, guaranteeing structural and distributional duality and provide explicit marginals and conditional distributions. Independence seen classically does not hold generally.

\begin{rem}
The core contribution is the coupled construction itself: building a unified probability space supporting all four components simultaneously. 
This resolves long-standing obstacles to extending Pitman's duality. See~ Remarks~\ref{rem:nontrivial} and \ref{rem:Architect} for further comments on challenges.  Section~\ref{sec:DynamicCouplin} shows this lifts to continuous time, where the structural coupling is crucial.  
\end{rem}

\subsection{Outline}

The paper's argument unfolds as follows. Section~\ref{sec:coupledmixedPoisson} establishes the architectural core: Theorem~\ref{coupledsupertheorem} constructs the four-component coupled system and provides its unified compound Poisson representation. This representation enables the explicit derivations of the coagulation, fragmentation, fine, and coarse EPPF components (Sections~\ref{sec:posterior_decomp}--\ref{sec:finetocoarse}), culminating in the general joint EPPF duality formulas (Sections~\ref{Sec:jointEPPF}--\ref{subsec:marginalized_duality}). Section~\ref{sec:section:PK} contextualizes these components by linking them to the family of Poisson-Kingman distributions and formally identifying the conditional $\mathbb{P}^{[n]}$ laws of the mass partitions. Section~\ref{sec:general_gibbs_duality} revisits our recent work~\cite{HoJamesLau2025}, which revealed that the celebrated independence in the classical Poisson-Dirichlet duality is a fragile exception. For general Gibbs partitions, a change of measure exposes a fundamental structural dependence. Building on this insight, we complete the unfinished business of~\cite{HoJamesLau2025} by explicitly characterizing the coagulation operator, establishing a true duality on par with~\cite{Pit99Coag} for this large class. Section~\ref{sec:tagged_fragmentation_example} provides crucial cross-validation by re-deriving stable-case duality using the coupled Poissonian construction, revealing structural properties like fragmentation invariance that are opaque from marginal viewpoints. It identifies the Poissonized form as essential for lifting to the dynamic setting. Section~\ref{sec:DynamicCouplin} realizes the framework's full dynamic potential by lifting to continuous time, yielding coupled processes---$\Lambda$-Fleming-Viot processes, structured $\Lambda$-coalescents, and homogeneous fragmentations---that exhibit simultaneous duality at every time point and extend to multi-group $J$-spider models, formally addressing Bertoin's commentary on Pitman-type duality in continuous time. Finally, Section~\ref{sec:dualitymachine} develops the $h$-biased L\'evy-It\^o coupled duality constructor, extending our framework to abstract Polish spaces. This articulates the \emph{cloud duality principle}---showing that duality arises from point-process regrouping and the geometry-versus-sampling dichotomy, rather than features specific to interval partitions.

\section{Four Part Coupled PHIBP Duality}\label{sec:coupledmixedPoisson}
We introduce the stochastic components underlying our framework, constructed from subordinators, Poisson-Kingman distributions, and mixed Poisson processes connecting to finite Gibbs EPPFs. We define our specific versions here; see~\cite{BerFrag, Pit97, Pit02, Pit06} for broader context and~\cite{HJL2, JamesStick, PitmanPoissonMix} for related distributions. Our construction uses $J+1$ independent subordinators: $(\sigma_j : j \in \{1, \dots, J\})$ for group-specific effects and $\sigma_0$ as a common tethering mechanism, each with associated Poisson processes. For $j \in \{0, 1, \dots, J\}$, the subordinator $\sigma_j = (\sigma_j(t) : t \ge 0)$ is a L\'evy process with no drift or Gaussian component, characterized by Poisson random measure $(s_{j,k}, U_{j,k})_{k \ge 1}$ on $(0, \infty) \times [0, 1]$ with mean measure $\tau_j(s) ds \, \U(du)$, where $\tau_j$ is its L\'evy measure. The ranked jumps $(s_{j,k})_{k \ge 1}$ satisfy $\sigma_j(1) = \sum_{k=1}^{\infty} s_{j,k}$, with Laplace transform
$$
\E[e^{-\gamma_j \sigma_j(t)}] = e^{-t\psi_j(\gamma_j)}, \quad \psi_j(\gamma_j) = \int_0^\infty (1 - e^{-\gamma_j s}) \tau_j(s) ds,
$$
where $\int_0^\infty \min(s, 1) \tau_j(s) ds < \infty$. The exponential cumulants are
$$
\psi_j^{(c)}(\gamma) = \int_0^\infty s^{c-1} e^{-s\gamma} \tau_j(s) ds.
$$

For each $j$, we construct iid pairs $(C_{j,k}, S_{j,k})$ with joint distribution
\begin{equation}
\label{MtPsimple}
\frac{s^{c_{j,k}}e^{-s\gamma_j}\tau_{j}(s)}{\psi^{(c_{j,k})}_{j}(\gamma_j)} \times \frac{(\gamma_j)^{c_{j,k}}\psi^{(c_{j,k})}_{j}(\gamma_j)}{\psi_{j}(\gamma_j)c_{j,k}!},
\end{equation}
denoted $C_{j,k} \sim \mathrm{MtP}(\tau_{j},\gamma_j)$. This admits the conditional construction $C_{j,k} | S_{j,k} = s \sim \mathrm{tP}(s\gamma_{j})$ (zero-truncated Poisson) with $S_{j,k}$ having marginal density $f_{S_{j}}(s)=\frac{(1-e^{-s\gamma_{j}})\tau_{j}(s)}{\psi_{j}(\gamma_{j})}$. The combinatorial structure connects to Bell polynomials via
\[
\psi_{j}(\gamma_{j}) = \sum_{k=1}^{\infty}\frac{\gamma^{k}_{j}\psi^{(k)}_{j}(\gamma_{j})}{k!},
\]
with exponentially tilted moments
\begin{equation}
\label{expmomentid}
\E[\sigma_{j}(\lambda)^{n} e^{-\gamma_{j} \sigma_{j}(\lambda)}] = e^{-\lambda\psi_{j}(\gamma_{j})} \Xi^{[n]}(\lambda\tau_{j}, \gamma_{j}),
\end{equation}
where $\Xi^{[n]}(\lambda\tau_{j}, \gamma_{j}) = \sum_{r=1}^{n} \Xi^{[n]}_{r}(\lambda\tau_{j}, \gamma_{j})$ and
\begin{equation}
\label{xisumrepbasic}
\Xi^{[n]}_{r}(\lambda\tau_{j}, \gamma_{j}) := \frac{n!\lambda^{r}}{r!} \sum_{(n_1, \ldots, n_r)}\prod_{l=1}^{r} \frac{\psi^{(n_{l})}_{j}(\gamma_{j})}{n_{l}!}=\lambda^{r}\Xi^{[n]}_{r}(\tau_{j}, \gamma_{j}),
\end{equation}
summing over ordered $r$-tuples $(n_1, \ldots, n_r)$ with $\sum_{l=1}^{r} n_l = n$.

For the tethering mechanism $j=0$, we denote the Laplace exponent by $\Psi_0$ (cumulants $\Psi_0^{(c)}$) and label Poisson measure points as $(\lambda_l, Y_l)_{l \ge 1}$ where $\sigma_0(1) = \sum_{l=1}^\infty \lambda_l$ and $(Y_l)$ are iid Uniform$[0,1]$. Setting $\gamma_{0}=\sum_{j=1}^{J}\psi_{j}(\gamma_{j})$, we highlight iid pairs $(H_{l},\tilde{X}_{l})$ with joint distribution
\begin{equation}
\label{HXdecomp}
\frac{\lambda^{x_{l}}e^{-\lambda\gamma_0}\tau_{0}(\lambda)}{\Psi^{(x_{l})}_{0}(\gamma_0)}\times \frac{\gamma_0^{x_{l}}\Psi^{(x_{l})}_{0}(\gamma_0)}
{x_{l}!\Psi_{0}(\gamma_0)},
\end{equation} 
where $\tilde{X}_{l}\sim \mathrm{MtP}(\tau_{0},\gamma_0)$ and $H_{l}$ has marginal density
\begin{equation}
\label{denH}
 f_{H}(\lambda) := \frac{(1-e^{-\lambda\gamma_0})\tau_{0}(\lambda)}{\Psi_{0}(\gamma_0)}.
\end{equation}

We employ these subordinators via two constructions. First, composing subordinators as in~\cite{hibp25}: the collection $(\sigma_{j,l}(\lambda_l))_{j \in [J]}$ from jumps $(\lambda_l)_{l \ge 1}$ of $\sigma_0$ forms Poisson random measure points on $\mathbb{R}_+^J \times \mathbb{R}_+$ with mean measure~\citep[Theorem 30.1]{Sato2013}
\begin{equation}
\label{eq:joint_measure_def}
\nu(d\mathbf{t}, d\lambda) = \left( \prod_{j=1}^J \mathbb{P}(\sigma_j(\lambda) \in dt_j) \right) \tau_0(\lambda) d\lambda,
\end{equation}
where
$$
\Psi_{0}(\gamma_0) = \int_0^\infty \left(1 - \mathbb{E}\left[\exp\left(-\sum_{j=1}^J \gamma_j \sigma_j(\lambda)\right)\right]\right) \tau_0(\lambda) d\lambda.
$$
For multi-index $\vv{n} = (n_1, \dots, n_J)$ and $\vv{\gamma} = (\gamma_1, \dots, \gamma_J)$, the joint exponential cumulant is
\begin{equation}
\label{jointexpcumulants}
\left(\Psi_{0}\circ \sum_{j=1}^{J}\psi_{j}\right)^{(\vv{n})}(\vv{\gamma})
:= \int_{0}^{\infty} \mathbb{E}\left[ \left(\prod_{j=1}^{J}[\sigma_{j}(\lambda)]^{n_{j}}\right)e^{-\sum_{j=1}^{J}\sigma_{j}(\lambda)\gamma_{j}} \right] \tau_{0}(\lambda)d\lambda.
\end{equation}

Following~\cite{PY97}[Proposition 33], we decompose the joint measure of $((\sigma_{j,l}(\lambda_l))_{j \in [J]}, \lambda_{l})_{l\ge 1}$, factoring \eqref{eq:joint_measure_def} as
\begin{equation}
    \label{eq:joint_measure_decomp}
    \nu(d\mathbf{t}, d\lambda) = f_{\pi}(\lambda|\mathbf{t}, \tau_{0})d\lambda \, \Lambda_{[J]}(dt_{1},\ldots,dt_{J}),
\end{equation}
where $\Lambda_{[J]}$ is the joint L\'evy measure of ranked jumps $((\sigma_{j,\pi(l)}(\lambda_{\pi(l)}))_{l\ge 1})_{j\in[J]}$ of $(\sigma_{j}\circ\sigma_{0}; j\in[J])$:
\begin{equation}
    \label{eq:levy_measure_def}
    \Lambda_{[J]}(dt_{1},\ldots,dt_{J})=\int_{0}^{\infty}\left(\prod_{j=1}^{J}\mathbb{P}(\sigma_{j}(\lambda)\in dt_{j})\right)\tau_{0}(\lambda)d\lambda,
\end{equation}
with concomitant jumps $(\lambda_{\pi(l)})_{l\ge 1}$ conditionally iid with density
\begin{equation}
    \label{eq:conditional_density_f_pi}
    f_{\pi}(\lambda|\mathbf{t}, \tau_{0})=\frac{\left(\prod_{j=1}^{J}\mathbb{P}(\sigma_{j}(\lambda)\in dt_{j})\right)\tau_{0}(\lambda)}{\Lambda_{[J]}(dt_{1},\ldots,dt_{J})}.
\end{equation}

\begin{rem} 
This evokes~\citep[Proposition 33]{PY97}, where, for $\theta>0,$ it establishes a sub-case $\mathrm{PD}(0,\theta)$-$\mathrm{PD}(\alpha,\theta)$ of Pitman's duality via gamma/generalized gamma subordinators which is otherwise represented in~\cite[Propositions 21 and 22]{PY97}. While the framework extends to general subordinators, tractability in~\citep{PY97} relies on stable beta-gamma algebra. For general subordinators, this yields a 'soft result' lacking explicit conditional laws for EPPFs; see, however, Remark~\ref{rem:stablefFragdist} for the pure stable case $\mathrm{PD}(\beta,0)$-$\mathrm{PD}(\alpha,0)$.
\end{rem}

By independent increments, $\sigma_{j,l}(\lambda_l) \overset{d}{=} \sigma_j(\lambda_l)$ has L\'evy density $\lambda_l \tau_j(s)ds$ with jumps
$$
\sigma_{j,l}(\lambda_l) = \sum_{k=1}^{\infty} s_{j,k,l}.
$$
The complete jump set $(s_{j,k})_{k\ge 1}$ is the union over $l$ of $(s_{j,k,l})_{k \ge 1}$. Alternatively, conditioning on $\sigma_0(1) = b$, the time-changed processes $(\sigma_j(b \cdot y) : y \in [0,1])$ are independent subordinators with L\'evy measure $b \cdot \tau_j(s)ds$, yielding the same $(s_{j,k})_{k\ge 1}$.

\subsection{Coupled Mixed Poisson Processes in the PHIBP Framework}\label{sec:coupledconstruct}
We introduce a novel set of coupled, vector-valued processes, indexed by $j=1, \dots, J$, which admit two distinct but related representations. The first, which we term the "fine-to-coarse" view, is the collection of processes $(I_j, \mathscr{A}_j, Z_j)_{j=1,\dots,J}$. The second, the "coarse-to-fine" view, is given by $(I_{j}, (\mathscr{F}^{(\lambda_l)}_{j,l})_{l \ge 1}, Z_j)_{j=1,\dots,J}$. For the fine-to-coarse coupling we consider, first given $\sigma_{0}$, let $(s_{j,k},w_{j,k}, U_{j,k})$ be the points of a Poisson random measure whose intensity depends on $\sigma_0,$ specifically it has mean measure $\tau_{j}(s)ds\sigma_{0}(dw)\mathbb{U}(du)$ on the product space $(0, \infty) \times [0, 1] \times [0, 1]$.
 We first introduce infinite collections of i.i.d. standard Poisson processes, which we denote by  $((\mathscr{P}_{j,k})_{k \geq 1})_{j \in [J]}$. Using the points $s_{j,k}$, we define random variables  $\mathscr{P}_{j,k}(s_{j,k}\gamma_{j}) \sim \mathrm{Poisson}(s_{j,k}\gamma_{j})$, which represent the  number of sub-species of type ${j,k}$ sampled up till time $\gamma_{j}$ with mean rate $s_{j,k}$. Foreshadowing our lifting of results to Bertoin's~\citep{BerFrag} continuous-time homogeneous fragmentation setting, we may view these processes as living in spaces of counting processes.

The variables $(\mathscr{A}_j, Z_j)_{j \in [J]}$ appear explicitly in~\citep{hibp25} and constitute the transparent part of the PHIBP, where $(\mathscr{A}_j)_{j \in [J]}$ plays an important role as the allocation process. The other variables mentioned are implicitly contained therein. The idea of coupling here is novel and allows us to put all quantities of interest on the same probability space. More importantly, it  establishes the coag/frag duality for a vast array of processes with implications for processes constructed from its basic structures. Results equivalent in distribution can also be achieved by decoupled versions, but one loses the fine interpretations demonstrated here, at the further expense of more complex proofs for weaker constructions. Crucially, without coupling, there is no possibility of elevation to meaningful continuous time  joint constructs. 

Now for $\gamma_{j}>0,$ and $y \in [0,1]$ construct
$$
(I_{j}(\gamma_{j},y),Z_{j}(\gamma_{j},y))=(\sum_{k=1}^{\infty}\mathscr{P}_{j,k}(s_{j,k}\gamma_{j})\mathbb{I}_{\{U_{j,k}\leq y\}},\sum_{k=1}^{\infty}\mathscr{P}_{j,k}(s_{j,k}\gamma_{j})\mathbb{I}_{\{w_{j,k}\leq y\}}) 
$$
Construct $(\mathscr{A}_{j}, j\in[J])$ as Bernoulli processes via: 
$$
\mathscr{B}_{j,k}(1-e^{-s_{j,k}\gamma_{j}}):=\mathbb{I}_{\{\mathscr{P}_{j,k}(s_{j,k}\gamma_{j})>0\}}
$$
and construct the coupled Allocation processes
\begin{equation}
\label{AllocationprocessBernoullirep}
\mathscr{A}_{j}(\gamma_{j},y)=\sum_{k=1}^{\infty}\mathscr{B}_{j,k}(1-e^{-s_{j,k}\gamma_{j}})\mathbb{I}_{\{w_{j,k}\leq y\}}=\sum_{k=1}^{\infty}\mathbb{I}_{\{\mathscr{P}_{j,k}(s_{j,k}\gamma_{j})>0\}}\mathbb{I}_{\{w_{j,k}\leq y\}}
\end{equation}

Using this,the coupled fine-to-coarse process vector $(I_j, \mathscr{A}_j, Z_j)$ is defined for each group $j \in {1, \dots, J}$, for any total time horizon $\gamma_j > 0$, and for all $y \in[0,1]$ as:

\begin{equation} \label{coupledfinetocoarsevector}
\begin{pmatrix} I_j(\gamma_j, y) \\ \mathscr{A}_j(\gamma_j, y) \\ Z_j(\gamma_j, y) \end{pmatrix}
:=
\begin{pmatrix}
\sum_{k=1}^{\infty}\mathscr{P}_{j,k}(s_{j,k}\gamma_{j})\mathbb{I}_{\{U_{j,k}\leq y\}} \\
\sum_{k=1}^{\infty}\mathbb{I}_{\{\mathscr{P}_{j,k}(s_{j,k}\gamma_{j})>0\}}\mathbb{I}_{\{w_{j,k}\leq y\}} \\
\sum_{k=1}^{\infty}\mathscr{P}_{j,k}(s_{j,k}\gamma_{j})\mathbb{I}_{\{w_{j,k}\leq y\}}
\end{pmatrix}
\end{equation}

We can now re-index the collection of processes $\left( \mathscr{P}_{j,k}(s_{j,k}\gamma_{j}) \right)_{k \ge 1}$ by sorting them according to the jumps $(\lambda_l)_{l \ge 1}$ of the process $\sigma_0$. This yields the equivalent, doubly-indexed representation:

\begin{equation} \label{eq:doubly_indexed_representation}
\left( \left( \mathscr{P}_{j,k,l}(s_{j,k,l}\gamma_{j}) \right)_{k \ge 1} \right)_{l \ge 1}
\end{equation}

Now we construct for each $(j,l)$ the Fragmenting mixed Poisson process,
\begin{equation}
\label{eq:FragmentingProcess}
\mathscr{F}^{(\lambda_l)}_{j,l}(\gamma_j, y) := \sum_{k=1}^{\infty} \mathscr{P}_{j,k,l}(s_{j,k,l}\gamma_{j}) \mathbb{I}_{\{U_{j,k,l}\leq y\}},
\end{equation}
such that 
\begin{equation}
I_j(\gamma_j, y) = \sum_{l=1}^{\infty} \mathscr{F}^{(\lambda_l)}_{j,l}(\gamma_j, y) = \sum_{l=1}^{\infty} \left[ \sum_{k=1}^{\infty} \mathscr{P}_{j,k,l}(s_{j,k,l}\gamma_{j}) \mathbb{I}_{\{U_{j,k,l}\leq y\}} \right]
\end{equation}
and 
\begin{equation}
Z_{j}(\gamma_j, y) = \sum_{l=1}^{\infty} \left[ \sum_{k=1}^{\infty} \mathscr{P}_{j,k,l}(s_{j,k,l}\gamma_{j}) \right] \mathbb{I}_{\{Y_{l}\leq y\}} = \sum_{l=1}^{\infty} \mathscr{F}^{(\lambda_l)}_{j,l}(\gamma_j, 1) \mathbb{I}_{\{Y_{l}\leq y\}},
\end{equation}
where $\mathscr{F}^{(\lambda_l)}_{j,l}(\gamma_j, 1) = \sum_{k=1}^{\infty} \mathscr{P}_{j,k,l}(s_{j,k,l}\gamma_{j})
=\mathscr{\tilde{P}}_{j,l}(\sigma_{j,l}(\lambda_{l})\gamma_{j}).$

These constructions lead to our coupled coarse to fine  vector $(I_{j}, (\mathscr{F}^{(\lambda_l)}_{j,l})_{l \ge 1}, Z_j)$ is defined for each group $j \in {1, \dots, J}$, for any total time horizon $\gamma_j > 0$, and for all $y \in[0,1]$ as:

\begin{equation} \label{coupledcoarsetofinevector}
\begin{pmatrix} I_j(\gamma_j, y) \\ (\mathscr{F}^{(\lambda_l)}_{j,l}(\gamma_j, y))_{l\ge 1} \\ Z_j(\gamma_j, y) \end{pmatrix}
:=
\begin{pmatrix}
\sum_{l=1}^{\infty}\sum_{k=1}^{\infty}\mathscr{P}_{j,k,l}(s_{j,k,l}\gamma_{j})\mathbb{I}_{\{U_{j,k,l}\leq y\}} \\
(\sum_{k=1}^{\infty}\mathscr{P}_{j,k,l}(s_{j,k,l}\gamma_{j})\mathbb{I}_{\{U_{j,k,l}\leq y\}})_{l\ge 1}\\
\sum_{l=1}^{\infty}[\sum_{k=1}^{\infty}\mathscr{P}_{j,k,l}(s_{j,k,l}\gamma_{j})]\mathbb{I}_{\{Y_{l}\leq y\}}
\end{pmatrix}
\end{equation}

Note as in~\citep{hibp25} these represent data processes and have as we shall show next highly tractables representations in terms of compound Poisson distributions. Before we do that we extend the univariate MtP variable to multivariate variables otherwise appearing in~\citep[Propositiom 3.2]{hibp25}
That is for each $\tilde{X}_{l}\sim \mathrm{MtP}(\tau_{0},\sum_{j=1}^{J}\psi_{j}(\gamma_{j)},$ there is a multivariate vector $(X_{j,l}, j\in[J])$ such  that $\tilde{X}_{l}=\sum_{j=1}^{J}X_{j,l}=1,2,\ldots$ and  $(X_{j,l}, j\in[J])\mid \tilde{X}_{l}=x{_l}$ has Multinomial distribution:
\begin{equation}
\label{multigroupX}
(X_{1,l}, \dots, X_{J,l}) \mid \tilde{X}_{l}=x_l \sim \mathrm{Multi}\left(x_l; q_{1},\ldots, q_{J}\right),      
\end{equation}
where the probabilities $q_{j} = \frac{\psi_{j}(\gamma_{j})}{\sum_{v=1}^{J}\psi_{v}(\gamma_{v})}$ represent the relative sampling effort for each group.
We also import the Poisson variable from that work,  $\varphi \sim \mathrm{Poisson}\left(\Psi_{0}\left(\sum_{j=1}^{J}\psi_{j}(\gamma_{j})\right)\right)$ with pmf
\begin{equation}
\label{Poissonvphi}
\mathbb{P}(\varphi=r)=\frac{{[\Psi_{0}(\sum_{j=1}^{J}\psi_{j}(\gamma_{j}))]}^{r}{\mbox e}^{-\Psi_{0}(\sum_{j=1}^{J}\psi_{j}(\gamma_{j}))}}{r!}
\end{equation}
which in that context is the total number of distinct species observed across all $J$ groups, that is for $\varphi=r$ there are $r$ iid species tags denoted $(\tilde{Y}_{1},\ldots,\tilde{Y}_{r}).$ 

Given our explicit constructions above, we now state what constitutes the cornerstone of this work: the existence and complete characterization of a four-component coupled mixed Poisson process system. This construction is far from trivial—it represents the resolution to a two-decade challenge of extending Pitman's duality beyond its canonical setting. Its computational tractability via explicit compound Poisson representations, and its role as the generative engine for our entire duality framework, are precisely what have remained elusive in previous approaches. We state clearly the major implications that follow from this construction:

\begin{thm}[Unified Compound Poisson Representation]
\label{coupledsupertheorem}
The explicit constructions previously introduced in the fine-to-coarse vector~\eqref{coupledfinetocoarsevector} and the coarse-to-fine vector~\eqref{coupledcoarsetofinevector}, establish the existence of a four-component coupled mixed Poisson process system $(I_{j}, \mathscr{A}_{j}, (\mathscr{F}^{(\lambda_l)}_{j,l})_{l \ge 1}, Z_j)_{\{j=1,\dots,J\}}$ with explicit compound Poisson structure.

For each group $j \in \{1, \dots, J\}$, the unified representation simultaneously encodes all four components through their compound Poisson structure:

\begin{equation} \label{eq:unified_4_tier_representation}
\begin{pmatrix}
I_j(\gamma_j, y) \\
\\
\mathscr{A}_j(\gamma_j, y) \\
\\
\left(\mathscr{F}^{(H_{\ell})}_{j,\ell}(\gamma_j, y)\right)_{\ell\ge 1} \\
\\
Z_j(\gamma_j, y)
\end{pmatrix}
:=
\begin{pmatrix}
\sum_{k=1}^{\mathscr{A}_{j}(\gamma_{j},1)}C_{j,k}\mathbb{I}_{\{\tilde{U}_{j,k}\leq y\}} = \sum_{\ell=1}^{\varphi}\mathscr{F}^{(H_{\ell})}_{j,\ell}(\gamma_j, y) \\
\\
\sum_{\ell=1}^{\varphi}X_{j,\ell}\mathbb{I}_{\{\tilde{Y}_{\ell}\leq y\}} \\
\\
\begin{cases}
    \sum_{k=1}^{X_{j,\ell}} C_{j,k,\ell} \mathbb{I}_{\{\tilde{U}_{j,k,\ell}\leq y\}} & \text{for } \ell \in \{1, \dots, \varphi\} \\
    0 & \text{for } \ell > \varphi
\end{cases} \\
\\
\sum_{\ell=1}^{\varphi}\left( \sum_{k=1}^{X_{j,\ell}} C_{j,k,\ell} \right) \mathbb{I}_{\{\tilde{Y}_{\ell}\leq y\}} = \sum_{\ell=1}^{\varphi}\mathscr{F}^{(H_{\ell})}_{j,\ell}(\gamma_j, 1)\mathbb{I}_{\{\tilde{Y}_{\ell}\leq y\}}
\end{pmatrix}
\end{equation}
Where the variables $\varphi,$ $(H_{\ell})$ and $(X_{j,\ell}, j\in[J])$ are specified in~\eqref{HXdecomp}, \eqref{Poissonvphi} and~\eqref{multigroupX}.
The count variables $(C_{j,k})$ and $(C_{j,k,\ell})$ are iid $\mathrm{MtP}(\tau_{j},\gamma_{j})$ and represent the same collection of random variables, re-indexed, and otherwise independent across $j\in[J].$  Similarly $(\tilde{U}_{j,k,l})$ are just a re-arrangement of the $(\tilde{U}_{j,k}).$ 

This system $(I_{j}, \mathscr{A}_{j}, (\mathscr{F}^{(\lambda_l)}_{j,l})_{l \ge 1}, Z_j)_{j=1,\dots,J}$ immediately implies the following manifestations of our general notion of duality on corresponding spaces:
\begin{enumerate}
\item (Coupled Mass Partitions ) The jumps $(s_{j,k})_{k\ge 1},$ $(\lambda_{l})_{l\ge 1},$ $(\sigma_{j,l}(\lambda_{l}))_{l\ge 1}$ and $((s_{j,k,l})_{k\ge 1})_{l\ge 1}$ acting as mean rates of abundance yield, via normalization, the complete set of coupled mass partitions with components $\mathbf{P}_{j} = (P_{j,k})_{k\ge 1},$ $\mathbf{Q}_0 = (Q_{0,l})_{l\ge 1},$ $\mathbf{V}_{j} = (V_{j,l})_{l\ge 1},$ and $\mathbf{Q}_{j,l} = (Q_{j,k,l})$ that underlie the duality. See Section~\ref{sec:section:PK}. 

\item (Coupled Bridges) This construction naturally induces the corresponding bridges: the fine-grained bridges $F_j(y) = \sum_{k=1}^{\infty}P_{j,k}\mathbb{I}_{\{U_{j,k}\leq y\}},$ the coagulating bridge $G_0(y) = \sum_{l=1}^{\infty}Q_{0,l}\mathbb{I}_{\{Y_l\leq y\}},$ the coarse-grained bridges $G_j(y) = (F_j \circ G_0)(y) = \sum_{l=1}^{\infty}V_{j,l}\mathbb{I}_{\{Y_l\leq y\}},$ and the fragmenting bridges $F_{j,l}(y) = \sum_{k=1}^{\infty}Q_{j,k,l}\mathbb{I}_{\{U_{j,k,l}\leq y\}}.$ See Section~\ref{sec:section:PK}

\item (Explicit EPPF Duality via Compound Poisson Count Calculus) The coupled system is pivotal for two reasons: (i) it provides a direct link to a set of mass partitions and their bridges, forming a Pitman duality at that abstract level, and (ii) crucially, its compound Poisson representation provides the tractable computational pathway that is absent when working solely with the mass partitions and bridges. While the mass partitions themselves encode the duality in principle, they resist direct combinatorial analysis. The transparent compound Poisson distributions of the count components—$(X_{j,\ell}),$ $(C_{j,k}),$ equivalently $(C_{j,k,\ell}),$ $(\mathscr{F}^{(H_{\ell})}_{j,\ell}(\gamma_j, 1))$ and $\varphi$—transform this abstract duality into explicit, computable joint Exchangeable Partition Probability Functions through straightforward conditional probability arguments. It is precisely this compound Poisson representation that renders the generalized duality analytically tractable as demonstrated in Sections~\ref{sec:posterior_decomp} to \ref{subsec:marginalized_duality}

\item (Dynamic Enablement and Computational Duality in Time:) 
This coupled system provides the static snapshot ($t=1$) of a continuous-time L\'evy process construct. Its dynamic lift, via a single Poisson random measure with intensity $dt \otimes \nu(d\mathbf{t}, d\lambda)$, simultaneously generates in lockstep: structured $\Lambda$-Fleming-Viot processes, their dual structured $\Lambda$-coalescents, and coupled homogeneous fragmentation processes in the sense of Bertoin. Crucially, the construction introduces a novel modeling feature: the pathwise generation of process pairs where one is guaranteed to be a refinement of the other at all times. Our framework's analytical tractability not only establishes this pathwise stochastic ordering, but elevates it to a hard computational primitive. (See Section~\ref{sec:DynamicCouplin})
\end{enumerate}
\end{thm}

\begin{proof}
The difficulty of this result is in envisioning its construction, which is clearly detailed in  \eqref{coupledfinetocoarsevector}  and \eqref{coupledcoarsetofinevector}. That is to say how and why does one choose the components. The explicit constructions, that is to say, the architecture, naturally establish existence of all components mentioned. 
The compound Poisson representations for $\mathscr{A}_j(\gamma_j, y)$ and $Z_j(\gamma_j, y)$ can be read from \citep[Proposition 3.2 and Theorem 3.1]{hibp25}. For further clarity, since representations in~\cite{hibp25} do not involve indicators, with respect to the exposition and notation in~\citep[Proposition 3.2]{hibp25}   we note that given $\lambda_{l},$ $\xi_{j,l}:=\sum_{k=1}^{\infty}\mathbb{I}_{\{\mathscr{P}_{j,k,l}(s_{j,k,l}\gamma_{j})>0\}}$  appearing in \eqref{coupledcoarsetofinevectorQUAD} is the total sum of a Bernoulli process and has a distribution $\mathrm{Poisson}(\psi_{j}(\gamma_{j})\lambda_{l})$, hence $(\mathscr{A}_{j}, j\in[J])$ is a multivariate Poisson IBP in the sense of ~\cite[Section 5]{James2017}. Since the processes $\mathscr{A}_j(\gamma_j, y)$ and $Z_j(\gamma_j, y)$ contain all the relevant variables the coupling constructions serve to conclude the representation for the other components. Alternatively, one can condition on $\sigma_{0}$ and apply~\cite{James2017} directly. These are effectively derived from thinning arguments similar to those described in the forthcoming~Section~\ref{sec:posterior_decomp}. As to implications, the mass partitions and bridges are directly read off and hence structurally connected to $(I_{j}, \mathscr{A}_{j}, (\mathscr{F}^{(\lambda_l)}_{j,l})_{l \ge 1}, Z_j)_{j=1,\dots,J}$  via the subordinator jumps and atoms. The point about corresponding EPPF's is a crucial feature of this work and is direct, once the precise compound Process components are identified, from the discussion in Section~\ref{sec:background} as articulated in ~\cite{PitmanPoissonMix}, as $(s_{j,k})_{k\ge 1},$ $(\lambda_{l})_{l\ge 1},$ $(\sigma_{j,l}(\lambda_{l}))_{l\ge 1}$ and $((s_{j,k,l})_{k\ge 1})_{l\ge 1}$ are precisely interpreted as absolute mean rates of types or species, and demonstrated specifically in~\cite{James2002, JamesStick, JLP2}. See also section~\ref{sec:genPnlaws}. The L\'evy-It\^o nature of our construction enables the direct lift to the continuous time setting with dynamics in effect following a coupled version of Bertoin's~\cite{BerFrag} Homogeneous Fragmentaton (HFP) framework, which otherwise exhibits our notion of duality at every time point. This is expressed in more detail in Section~\ref{sec:DynamicCouplin}, but is a fairly automatic consequence once specified. 
\end{proof}

\begin{rem}[On the Non-Triviality of This Construction]
\label{rem:nontrivial}
The brevity of the proof belies the depth of challenge. Pitman's coagulation-fragmentation duality \cite{Pit99Coag} for $\mathrm{PD}(\alpha,\theta)$ has resisted extension for over two decades due to: (i) reliance on stable-beta-gamma algebra unavailable for general subordinators; (ii) absence of multi-group characterizations; (iii) disintegration of tractable components—the classical duality's explicit laws vanish for general subordinators, leaving components analytically opaque.

The breakthrough lies in construction design. Equations \eqref{coupledfinetocoarsevector} and \eqref{coupledcoarsetofinevector} provide equivalent representations coupling all four components—fine partition $I_j$, coagulation operator (via $\mathscr{A}_{j}$), fragmenting processes $\mathscr{F}^{(H_{\ell})}_{j,\ell}$, and coarse partition $Z_j$—on the same probability space with explicit interdependence.
The construction exploits the connection between mixed Poisson processes and EPPF structure (Section~\ref{sec:background}), bypassing notoriously difficult analysis on mass and integer partition spaces. Instead, it operates in the analytically simpler coupled mixed Poisson framework where explicit laws emerge naturally. While our prior work \cite{hibp25} established the compound Poisson structure of $\mathscr{A}_j$ and $Z_j$ as a distributional fact, it did not expose the underlying constructive mechanism. The key insight here is to reveal that mechanism: we build the allocation process from the ground up using the indicator representation $\mathbb{I}_{\{\mathscr{P}_{j,k}(s_{j,k}\gamma_{j})>0\}},$ which does not appear in~\cite{hibp25}. This shows that the component $\xi_{j,l}$, previously presented simply as a $\mathrm{Poisson}(\psi_{j}(\gamma_{j})\lambda_{l})$ variable, is in fact the literal count of active fine-scale clusters: $\xi_{j,l}:=\sum_{k=1}^{\infty}\mathbb{I}_{\{\mathscr{P}_{j,k,l}(s_{j,k,l}\gamma_{j})>0\}}$. The fact that this sum over the fundamental atoms $(s_{j,k,l})$ collapses to a clean Poisson variable is the subtle, deceptive, engine of the entire construction, making clear its role in the coagulation process dictated by $(\lambda_{l})$. This crucially provides \emph{pointwise coupling}, not distributional equivalence. Proving distributional equivalence of cluster sizes $(C_{j,k})_{k \geq 1}$ and $(C_{j,k,\ell})_{k,\ell}$ requires lengthy verification of marginals and conditionals. Pointwise equality reveals these are \emph{the same random variables}, reindexed by species assignment, bypassing bookkeeping and enabling tractable partition-valued duality. Critically, distributional equivalence cannot support the continuous-time extension (Section~\ref{sec:DynamicCouplin}), which requires all four components evolving simultaneously—guaranteed only by pointwise coupling.
Section~\ref{sec:DynamicCouplin} shows this static construction lifts to continuous time, yielding a four-part dual architecture with implications beyond this work. The explicit EPPFs (Sections~\ref{sec:fragdualitycomp}--\ref{subsec:marginalized_duality}) are not merely finite-sample results but cross-sections of richer dynamical theory.
\end{rem}

\begin{rem}
As demonstrated via prediction rules in \cite{hibp25}, we can add arbitrary numbers of groups or future realizations within existing groups after observing the coupled processes in Theorem \ref{coupledsupertheorem}.
\end{rem}

\subsection{Conditioning and Decompositions of the Process}
\label{sec:posterior_decomp}

In this section, we describe the conditional behavior of the basic subordinators and the analytic forms of the distributions from Theorem~\ref{coupledsupertheorem}. In many applications, such as the microbiome setting of~\citep{hibp25}, the process of counts and species tags, $(Z_{j}, j\in[J])$, is considered observable, whereas the other processes are latent. Our duality results rely on the law of the process, conditional on these counts. The results below are primarily deduced from~\cite{hibp25}, see also Section 5 of~\citep{James2017}.
Specifically, we are interested in the law of the subordinators and related constructs given the observed counts $n_{j,\ell}$ for $j \in [J]$ and $\ell \in [r]$, associated with $r$ iid species tags $(\tilde{Y}_{1},\ldots,\tilde{Y}_{r})$. These counts arise from the fragmenters, where $\mathscr{F}^{(H_{\ell})}_{j,\ell}(\gamma_j, 1)=n_{j,\ell}$. Conditioning on these observations transforms the law of the latent hierarchical process, inducing the state-dependence that is central to our general duality. The foundation for this conditioning is the decomposition of the L\'evy measure $\nu$ based on whether a jump results in an observation. A jump of the parent process with size $\lambda$ and type $\mathbf{t}$ fails to produce any counts across all $J$ groups with probability $e^{-\sum_{j=1}^{J}t_{j}\gamma_{j}}$. This naturally partitions the L\'evy measure $\nu(d\mathbf{t},d\lambda)$ into two components:
$$
\nu(d\mathbf{t},d\lambda) = \underbrace{e^{-\sum_{j=1}^{J}t_{j}\gamma_{j}}\nu(d\mathbf{t},d\lambda)}_{\text{part yielding no observations}} + \underbrace{\left(1-e^{-\sum_{j=1}^{J}t_{j}\gamma_{j}}\right)\nu(d\mathbf{t},d\lambda)}_{\text{part yielding at least one observation}}.
$$
The total mass of the second term, which corresponds to the rate of observing new species, is given by the identity:
\begin{equation}
\label{eq:integral_identity}
\int_{\mathbb{R}_{+}^{J+1}} \left(1-e^{-\sum_{j=1}^{J}t_{j}\gamma_{j}}\right) \nu(d\mathbf{t},d\lambda) = \Psi_{0}\left(\sum_{j=1}^{J}\psi_{j}(\gamma_{j})\right).
\end{equation}
This implies that $\varphi$, the total number of distinct species observed, follows a $\mathrm{Poisson}\left(\Psi_{0}\left(\sum_{j=1}^{J}\psi_{j}(\gamma_{j})\right)\right)$ distribution.

This decomposition of $\nu$ corresponds to a thinning of the underlying Poisson point process that generates the jumps of the parent subordinator $\sigma_0$. We can thus partition the set of all species into two disjoint collections:
\begin{itemize}
    \item The set of $\varphi$ \textit{observed} species, associated with parent jump sizes $(H_\ell)_{\ell=1}^\varphi$ and locations $(\tilde{Y}_\ell)_{\ell=1}^\varphi$.
    \item The infinite set of \textit{unobserved} species, associated with parent jump sizes $(\lambda'_l)_{l \ge 1}$ and locations $(Y'_l)_{l \ge 1}$.
\end{itemize}
This allows us to represent the compound subordinator $\sigma_j \circ \sigma_0$ as a sum of contributions from these two distinct sets of jumps. Let the jump sizes of the child process $\sigma_j$ corresponding to observed and unobserved parent jumps be denoted by $\tilde{\sigma}_{j,\ell}(H_\ell)$ and $\sigma'_{j,l}(\lambda'_l)$, respectively. The process can then be written as:
\begin{equation}
\label{eq:process_decomposition}
(\sigma_j \circ \sigma_0)(y) = \sum_{l=1}^{\infty} \sigma'_{j,l}(\lambda'_{l}) \mathbb{I}_{\{Y'_{l} \le y\}} + \sum_{\ell=1}^{\varphi} \tilde{\sigma}_{j,\ell}(H_{\ell}) \mathbb{I}_{\{\tilde{Y}_{\ell} \le y\}}.
\end{equation}
furthermore
\begin{equation}
\label{eq:process_decomposition2}
\sigma_{0}(y) = \sum_{l=1}^{\infty} \lambda'_{l} \mathbb{I}_{\{Y'_{l} \le y\}} + \sum_{\ell=1}^{\varphi} H_{\ell} \mathbb{I}_{\{\tilde{Y}_{\ell} \le y\}}.
\end{equation}

And importantly, for each observed species $\ell \in \{1, \ldots, \varphi\}$, we specify the law governing its properties. This process begins with the parent jump size $H_{\ell}$ and the corresponding vector of child jump sizes, $(\tilde{\sigma}_{j,\ell}(H_{\ell}))_{j \in [J]}$.
A crucial feature of the model is that a jump from the underlying L\'evy process is only \emph{observed} if it generates at least one individual in at least one of the $J$ features. Given a vector of child jump sizes $\mathbf{t} = (t_1, \ldots, t_J)$, the total rate of generating individuals is $\sum_{j=1}^{J} \gamma_j t_j$. The probability of observing at least one individual is therefore $\left(1 - e^{-\sum_{j=1}^{J} t_j \gamma_j}\right)$.

We define the joint law of an observed parent jump $H_{\ell}$ and its child jumps $(\tilde{\sigma}_{j,\ell}(H_{\ell}))$ by conditioning the base L\'evy measure $\nu(\text{d}\mathbf{t},\text{d}\lambda)$ on this observation event. This yields a new probability measure, where the denominator is the total mass of all such observable jumps, ensuring it integrates to one:
\[
\mathbb{P}\left(H_{\ell} \in \text{d}\lambda, (\tilde{\sigma}_{j,\ell}(H_{\ell}))_{j \in [J]} \in \text{d}\mathbf{t}\right) = \frac{\left(1-e^{-\sum_{j=1}^{J}t_{j}\gamma_{j}}\right) \nu(\text{d}\mathbf{t},\text{d}\lambda)}{\Psi_{0}\left(\sum_{j=1}^{J}\psi_{j}(\gamma_{j})\right)}.
\]
To simplify, we can derive the marginal distribution for the parent jump size $H_{\ell}$ alone. This is achieved by integrating the joint law above over all possible child jump size vectors $\mathbf{t}$, which gives:
\[
\mathbb{P}(H_{\ell}\in \text{d}\lambda)=\frac{\left(1-e^{-\lambda \sum_{j=1}^{J}\psi_{j}(\gamma_{j})}\right)\tau_{0}(\lambda)\text{d}\lambda}{\Psi_{0}\left(\sum_{j=1}^{J}\psi_{j}(\gamma_{j})\right)}.
\]
Now, we can assemble the complete picture. For each observed species $\ell$, we describe the full joint distribution for:
\begin{enumerate}
    \item The specific count vector that was observed, $(\mathscr{F}^{(H_{\ell})}_{j,\ell}(\gamma_{j},1)=n_{j,\ell}, j\in[J])$.
    \item The underlying child jump sizes that generated those counts, $(\tilde{\sigma}_{j,\ell}(H_{\ell})=t_{j}, j\in[J])$.
    \item The parent jump size, $H_{\ell}$.
\end{enumerate}
This full distribution can be understood as a product of conditional probabilities that reflect the generative story. The final expression, which we present below, combines the probability of the counts given the jumps, the probability of the child jumps given the parent jump, and the probability of the parent jump.

For each $\ell \in \{1, \ldots, \varphi\}$, the joint distribution is given by:
\begin{equation}
\label{fulljoint}
\frac{\prod_{j=1}^{J} (\gamma_{j})^{n_{j,\ell}} t_{j}^{n_{j,\ell}} e^{-t_{j} \gamma_{j}}}{\left(1 - e^{-\sum_{j=1}^{J} \gamma_{j} t_{j}}\right) \prod_{j=1}^{J} n_{j,\ell}!} \cdot \frac{\left(1 - e^{-\sum_{j=1}^{J} \gamma_{j} t_{j}}\right) \prod_{j=1}^{J} \mathbb{P}(\sigma_{j}(\lambda)\in \text{d}t_{j})}{\left(1 - e^{-\lambda \sum_{j=1}^{J} \psi_{j}(\gamma_{j})}\right)} \mathbb{P}(H_{\ell}\in \text{d}\lambda).
\end{equation}
Here, the first term is the probability mass function of a zero-truncated multivariate Poisson distribution. It gives the probability of observing the specific count vector $(n_{j,\ell})$ under the explicit condition that the total count is at least one, i.e., $\sum_{j=1}^{J}n_{j,\ell} \ge 1$. These points, which can be read from~\cite{hibp25},  leads directly to a structural decomposition of the subordinators $(\sigma_j \circ \sigma_0, \sigma_0)$ themselves, which we formalize in the following proposition. This result is a direct application of the characterization in~\citep[Theorem 4.1, eq (4.4)]{hibp25}.

\begin{lem}[Decomposition of the Process Given the Counts, cf. Theorem 4.1 in~\citep{hibp25}]
\label{prop:posterior_process_decomp}
Let $(\tilde{\sigma}_0, \tilde{\sigma}_1, \ldots, \tilde{\sigma}_J)$ denote the process $(\sigma_0, \sigma_1, \ldots, \sigma_J)$ under the conditional law given the fragmenting count vectors $(\mathscr{F}^{(H_{\ell})}_{j,\ell}(\gamma_j, 1)=n_{j,\ell}, j\in[J])_{\ell\in[r]}$, the unique species tags $(\tilde{Y}_{1},\ldots, \tilde{Y}_{r}),$ and their number $\varphi=r$. This conditional process admits the following decomposition:
\begin{align}
    \tilde{\sigma}_0 &\overset{d}{=} \sigma'_{0,J} + \sum_{\ell=1}^r H_\ell \delta_{\tilde{Y}_\ell} \\
    \tilde{\sigma}_j \circ \tilde{\sigma}_0 &\overset{d}{=} \sigma'_{j}\circ \sigma'_{0,J} + \sum_{\ell=1}^r \tilde{\sigma}_{j,\ell}(H_\ell) \delta_{\tilde{Y}_\ell}.
\end{align}
The components of this decomposition are defined as follows:
\begin{enumerate}
    \item[1.] The Unobserved Component. This part corresponds to species not present in the sample.
    \begin{itemize}
        \item $\sigma'_{0,J}$ is the portion of $\sigma_{0}$ corresponding to jumps that go unobserved, with jumps denoted $(\lambda'_{l})_{l\ge 1}$. Its L\'evy density is thinned accordingly:
        $$
        \tau_{0,J}(\lambda)=e^{-\lambda\sum_{j=1}^{J}\psi_{j}(\gamma_{j})}\tau_{0}(\lambda).
        $$
        \item Each $\sigma'_{j}$ is an independent subordinator whose L\'evy density is thinned by the sampling effort $\gamma_j$:
        $$
        \tau_{j,\gamma_{j}}(s) = e^{-s \gamma_{j}} \tau_j(s).
        $$
        \item The thinned subordinator $\sigma'_j$ has the following fragmentation representation for $y\in [0,1]$:
           $$
            \sigma'_{j}(y)=\sum_{l=1}^{\infty}\sigma'_{j,l}(\lambda'_{l}y)=\sum_{l=1}^{\infty}\sum_{k=1}^{\infty}s'_{j,k,l}\mathbb{I}_{\{U'_{j,k,l}\leq y\}}
           $$
           where the jumps $(s'_{j,k,l})_{k\ge 1}$ are determined by the L\'evy measure $\lambda'_{l}e^{-s\gamma_{j}}\tau_{j}(s)ds$.
    \end{itemize}
    \item[2.] The Observed Component. This part corresponds to the $r$ species observed in the sample, with tags $(\tilde{Y}_{l})_{\ell=1}^r$.
    \begin{itemize}
        \item The global abundance rates $(H_\ell)_{\ell=1}^r$ are conditionally independent random variables whose laws are determined by the observed counts $\vv{n}_\ell = (n_{j,\ell})_{j \in [J]}$. The precise density for each $H_\ell$ is given in Proposition~\ref{prop:Hl_conditional_density} below.
        \item Given $H_{\ell}=\lambda$, the local abundance rates $(\tilde{\sigma}_{j,\ell}(H_\ell))_{j\in[J]}$ are conditionally independent. The law of each $\tilde{\sigma}_{j,\ell}(H_\ell)$ is a size-biased distribution, conditioned on its corresponding count $n_{j,\ell}$:
        \begin{equation}
        \label{densitypowerbiasnew}
        \mathbb{P}(\tilde{\sigma}_{j,\ell}(H_\ell) \in dt_{j} | H_\ell = \lambda, n_{j,\ell}) = \frac{t_j^{n_{j,\ell}} e^{-\gamma_j t_{j}}}{\mathbb{E}[\sigma_j(\lambda)^{n_{j,\ell}} e^{-\gamma_j \sigma_j(\lambda)}]} \mathbb{P}(\sigma_j(\lambda) \in dt_{j}).
        \end{equation}
    \end{itemize}
\end{enumerate}
\end{lem}

To fully specify the conditional process, we provide the distributions for the observed counts and the resulting conditional law for the latent jumps $H_\ell$.  

\begin{lem}[Marginal and Conditional Count Distributions, cf. Prop 4.3 in~\citep{hibp25}]\label{post:marginalofN}
For each observed species $\tilde{Y}_{\ell}$, the distributions of the count vector $(\mathscr{F}^{(H_{\ell})}_{j,\ell}(\gamma_j, 1)=n_{j,\ell}, j\in[J])$, where $\sum_{j=1}^{J}n_{j,\ell} \ge 1$, are as follows:
\begin{enumerate}
    \item[1.] Given $H_{\ell}=\lambda$, the conditional probability of the count vector is:
    \begin{equation}
    \label{Ngroupldistforfragconditional}
    \mathbb{P}((\mathscr{F}^{(H_{\ell})}_{j,\ell}(\gamma_j, 1)=n_{j,\ell}, j\in[J]) | H_{\ell}=\lambda) = \frac{\prod_{j=1}^{J}{(\gamma_{j})}^{n_{j,\ell}}}{\prod_{j=1}^{J}n_{j,\ell}!} \frac{e^{-\lambda\sum_{v=1}^{J}\psi_{v}(\gamma_{v})}\prod_{j=1}^{J}\Xi^{[n_{j,\ell}]}(\lambda\tau_{j},\gamma_{j})}{1-e^{-\lambda\sum_{v=1}^{J}\psi_{v}(\gamma_{v})}}.
    \end{equation}
    \item[2.] The marginal (unconditional) probability of the count vector is:
    \begin{equation}
    \label{Ngroupldistforfrag}
    \mathbb{P}((\mathscr{F}^{(H_{\ell})}_{j,\ell}(\gamma_j, 1)=n_{j,\ell}, j\in[J]) ) = \frac{\prod_{j=1}^{J}{(\gamma_{j})}^{n_{j,\ell}}}{\prod_{j=1}^{J}n_{j,\ell}!}
    \frac{\left(\Psi_{0}\circ \sum_{j=1}^{J}\psi_{j}\right)^{(\vv{n}_{\ell})}(\vv{\gamma})}{\Psi_{0}(\sum_{j=1}^{J}\psi_{j}(\gamma_{j}))}.
    \end{equation}
\end{enumerate}
\end{lem}

\begin{lem}[Conditional Distribution of Global Jumps]
\label{prop:Hl_conditional_density}
For each observed species $\tilde{Y}_{\ell}$ with count vector $\vv{n}_{\ell}=(n_{j,\ell})_{j \in [J]}$, the conditional probability density function of the associated global jump $H_{\ell}$ given these counts $(\mathscr{F}^{(H_{\ell})}_{j,\ell}(\gamma_j, 1)=n_{j,\ell}, j\in[J])$ is:
\begin{equation}\label{eq:Hl_density2}
\mathbb{P}(H_{\ell}\in d\lambda|\vv{n}_{\ell},\vv{\gamma}) = \frac{\tau_{0}(\lambda) \prod_{j=1}^{J}\mathbb{E}\left[\sigma_{j}(\lambda)^{n_{j,\ell}} e^{-\sigma_{j}(\lambda)\gamma_{j}}\right]d\lambda}{\left(\Psi_{0}\circ \sum_{j=1}^{J}\psi_{j}\right)^{(\vv{n}_{\ell})}(\vv{\gamma})}.
\end{equation}
\end{lem}
\begin{proof} This result does not appear explicitly in~\cite{hibp25}. However one may use Lemma~\ref{post:marginalofN} part 1., with the marginal of $H_{\ell},$ to obtain the joint distribution and then obviously divide by the marginal distribution described in~\eqref{Ngroupldistforfrag}
\end{proof}

\begin{rem}One may also use the joint distribution~\eqref{fulljoint} to directly verify Lemmas  \ref{post:marginalofN} and \ref{prop:Hl_conditional_density}. Furthermore, we note that while the Lemmas~\ref{prop:posterior_process_decomp},  \ref{post:marginalofN} and the forthcoming Lemma~\ref{prop:GibbsEPPF} are obtained from~\cite{hibp25} the interpretation of the fragmenters $(\mathscr{F}^{(H_{\ell})}_{j,\ell}(\gamma_{j},1))$ is not given and the results appear using a more generic notation of $N_{j,\ell}.$
\end{rem}

\subsection{The Fragmentation Component of Duality}\label{sec:fragdualitycomp}
As we mentioned, the work of~\citep{hibp25} already contains many components of the coarse to fine duality characterizations including non-obvious analytic results as we described above.  We now use Proposition 4.1 of that work to identify the fragmentaton of the coarse to fine side of the duality

\begin{lem}[The Fragmentation Component of the Duality, cf.~Proposition 4.1 in~\citep{hibp25}]
\label{prop:GibbsEPPF}
Consider the total count $\mathscr{F}^{(H_{\ell})}_{j,\ell}(\gamma_{j},1) = \sum_{k=1}^{X_{j,\ell}} C_{j,k,\ell}$ for $j=1,\ldots,J$ and $\ell=1,\ldots,r$. The conditional distributions of the number of blocks $X_{j,\ell}$ and the block sizes $\{C_{j,k,\ell}\}_{k=1}^{X_{j,\ell}}$, given the total count $\mathscr{F}^{(H_{\ell})}_{j,\ell}(\gamma_{j},1)=n_{j,\ell}$ and the parent jump size $H_{\ell} = \lambda$, have the following properties:
\begin{enumerate}
    \item The distributions are independent across the feature-species indices $(j,\ell)$.

    \item For any given pair $(j, \ell)$ with a total count $n_{j,\ell} > 0$, the joint probability mass function for the ordered composition of counts $\mathbf{c}_{j,\ell} := (c_{j,1,\ell}, \ldots, c_{j,x_{j,\ell},\ell})$ such that $\sum_{k=1}^{x_{j,\ell}} c_{j,k,\ell} = n_{j,\ell}$ is:
    \begin{equation} \label{eq:ordered_counts_pmf}
    \mathbb{P}\left(\mathbf{C}_{j,\ell} = \mathbf{c}_{j,\ell}, X_{j,\ell}=x_{j,\ell} \mid \mathscr{F}^{(H_{\ell})}_{j,\ell}(\gamma_{j},1)=n_{j,\ell}, H_\ell=\lambda\right) = \frac{n_{j,\ell}!}{x_{j,\ell}! \prod_{k=1}^{x_{j,\ell}} c_{j,k,\ell}!} p^{[n_{j,\ell}]}(\mathbf{c}_{j,\ell}|\lambda\tau_{j},\gamma_{j}).
    \end{equation}
    This distribution corresponds to arranging the blocks of a finite Gibbs partition in an exchangeable order. 
    
    \item The term $p^{[n_{j,\ell}]}(\cdot)$ is the probability of the underlying unordered partition, given by the finite Gibbs Exchangeable Partition Probability Function (EPPF):
    \begin{equation} \label{eq:fragGibbsEPPF}
    p^{[n_{j,\ell}]}(\mathbf{c}_{j,\ell}|\lambda\tau_{j},\gamma_{j}) = \frac{\lambda^{x_{j,\ell}}}{\Xi^{[n_{j,\ell}]}(\lambda\tau_{j},\gamma_{j})} \prod_{k=1}^{x_{j,\ell}}\psi^{(c_{j,k,\ell})}_{j}(\gamma_{j}).
    \end{equation}

    \item The marginal conditional distribution for the number of blocks, $X_{j,\ell}$, given $n_{j,\ell}>0$, is:
    \begin{equation} \label{eq:fragNumBlocks}
    \mathbb{P}\left(X_{j,\ell}=x_{j,\ell} \mid \mathscr{F}^{(H_{\ell})}_{j,\ell}(\gamma_{j},1)=n_{j,\ell}, H_\ell=\lambda\right) = \frac{\lambda^{x_{j,\ell}}\Xi^{[n_{j,\ell}]}_{x_{j,\ell}}(\tau_{j},\gamma_{j})}{\Xi^{[n_{j,\ell}]}(\lambda\tau_{j},\gamma_{j})},
    \end{equation}
    for $x_{j,\ell} \in \{1, \ldots, n_{j,\ell}\}$. If $n_{j,\ell}=0$, then $X_{j,\ell}=0$ by definition.
\end{enumerate}
\end{lem}

Now using this result and Lemma~\ref{prop:Hl_conditional_density} we arrive at a description of the joint fragmentation operator in the coarse to fine portion of the duality

\begin{prop}[The Full J-Group Poissonized Fragmentation Operator]
\label{prop:FullFragOperator}
Consider the setting from Propositions~\ref{prop:GibbsEPPF} and~\ref{prop:Hl_conditional_density}. The J-group Poissonized fragmentation operator is expressed as the product of the combinatorial factors for arranging partitions and the laws of the underlying unordered partitions for each feature:
\begin{equation}\label{eq:full_frag_operator_expression}
\prod_{\ell=1}^{r} \prod_{j=1}^{J} \frac{n_{j,\ell}!}{x_{j,\ell}! \prod_{k=1}^{x_{j,\ell}} c_{j,k,\ell}!} \times \prod_{\ell=1}^{r} p_{\mathrm{Frag}}(\mathbf{c}_{\ell}|\vv{n}_{\ell},\vv{\gamma}).
\end{equation}
The term $p_{\mathrm{Frag}}(\mathbf{c}_{\ell}|\vv{n}_{\ell},\vv{\gamma})$ is the law of the underlying collection of \textit{unordered} partitions for a given feature $\ell$. It has the analytical form:
\begin{equation}\label{eq:p_frag_analytic_final}
p_{\mathrm{Frag}}(\mathbf{c}_{\ell}|\vv{n}_{\ell},\vv{\gamma}) = \frac{\Psi^{(\tilde{x}_{\ell})}_{0}\left(\sum_{j=1}^{J}\psi_{j}(\gamma_{j})\right)\prod_{j=1}^{J}\prod_{k=1}^{x_{j,\ell}}\psi_{j}^{(c_{j,k,\ell})}(\gamma_{j})}
{\left(\Psi_{0}\circ \sum_{j=1}^{J}\psi_{j}\right)^{(\vv{n}_{\ell})}(\vv{\gamma})}.
\end{equation}
Furthermore, this law has an integral representation that reveals its generative structure by averaging over the latent jump size $H_\ell$:
\begin{equation}\label{eq:p_frag_integral_final}
p_{\mathrm{Frag}}(\mathbf{c}_{\ell}|\vv{n}_{\ell},\vv{\gamma}) = \int_{0}^{\infty}\left(\prod_{j=1}^{J}p^{[n_{j,\ell}]}(\mathbf{c}_{j,\ell}|\lambda \tau_{j},\gamma_{j})\right)\mathbb{P}(H_{\ell}\in d\lambda|\vv{n}_{\ell},\vv{\gamma}).
\end{equation}
\end{prop}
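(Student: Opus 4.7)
The plan is to assemble the claimed identities from results already in hand, treating the proposition as an integration-and-cancellation exercise rather than a fresh probabilistic construction. The starting point is the conditional independence structure supplied by Proposition~\ref{prop:posterior_process_decomp} together with Proposition~\ref{prop:GibbsEPPF}: across the $r$ observed species the pairs $(H_\ell,(\mathbf{C}_{j,\ell},X_{j,\ell})_{j\in[J]})$ are mutually independent, and given $H_\ell=\lambda$ the collection $((\mathbf{C}_{j,\ell},X_{j,\ell}))_{j\in[J]}$ factorizes across the $J$ features. This justifies writing the full operator as $\prod_{\ell=1}^r$ of a single-species term, which in turn, for each fixed $\ell$, splits into a product of the ordered-arrangement combinatorial factors $n_{j,\ell}!/(x_{j,\ell}!\prod_k c_{j,k,\ell}!)$ and an unordered partition law $p_{\mathrm{Frag}}(\mathbf{c}_\ell|\vv{n}_\ell,\vv{\gamma})$, establishing \eqref{eq:full_frag_operator_expression} structurally.

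With that structural identity in place, I would define $p_{\mathrm{Frag}}(\mathbf{c}_\ell|\vv{n}_\ell,\vv{\gamma})$ by averaging the conditional Gibbs product $\prod_{j=1}^J p^{[n_{j,\ell}]}(\mathbf{c}_{j,\ell}|\lambda\tau_j,\gamma_j)$ over the conditional law of $H_\ell$ given $\vv{n}_\ell$ supplied by \eqref{eq:Hl_density2}. This immediately yields the integral representation \eqref{eq:p_frag_integral_final}, so no additional work is needed to establish it once the factorization of the previous paragraph is granted.

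To extract the closed form \eqref{eq:p_frag_analytic_final}, I would substitute \eqref{eq:fragGibbsEPPF} for each $p^{[n_{j,\ell}]}(\mathbf{c}_{j,\ell}|\lambda\tau_j,\gamma_j)$, and substitute \eqref{expmomentid} for each moment $\E[\sigma_j(\lambda)^{n_{j,\ell}}e^{-\gamma_j\sigma_j(\lambda)}]$ that appears in the conditional density \eqref{eq:Hl_density2}. The key cancellation is that the denominator $\Xi^{[n_{j,\ell}]}(\lambda\tau_j,\gamma_j)$ of each Gibbs EPPF is exactly the non-exponential factor in the corresponding moment, so after these substitutions the integrand collapses to
\begin{equation*}
\frac{\prod_{j=1}^J\prod_{k=1}^{x_{j,\ell}}\psi_j^{(c_{j,k,\ell})}(\gamma_j)}{(\Psi_0\circ\sum_{j=1}^J\psi_j)^{(\vv{n}_\ell)}(\vv{\gamma})}\;\lambda^{\tilde{x}_\ell}\,e^{-\lambda\sum_{j=1}^J\psi_j(\gamma_j)}\,\tau_0(\lambda),
\end{equation*}
with $\tilde{x}_\ell=\sum_{j=1}^J x_{j,\ell}$. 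Integrating $\lambda$ against $\tau_0(\lambda)d\lambda$ produces $\Psi_0^{(\tilde{x}_\ell)}(\sum_{j=1}^J\psi_j(\gamma_j))$ by the definition of the exponential cumulants of $\sigma_0$, giving \eqref{eq:p_frag_analytic_final}.

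The main obstacle is not conceptual but bookkeeping: one must be careful that the $\lambda$-powers, the $\Xi$-factors, and the normalizing joint cumulant $(\Psi_0\circ\sum_j\psi_j)^{(\vv{n}_\ell)}(\vv{\gamma})$ line up so that the cancellation is clean and the final exponent on $\lambda$ is precisely $\tilde{x}_\ell$, matching the order of the cumulant of $\Psi_0$ appearing in the answer. Beyond this verification, the proof is essentially a clean substitution, which is consistent with the methodological note that constructions, once correctly assembled, should force the identities to reveal themselves.
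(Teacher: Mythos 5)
Your proposal is correct and follows essentially the same route as the paper, which presents the proposition as a direct consequence of Proposition~\ref{prop:GibbsEPPF} and Proposition~\ref{prop:Hl_conditional_density}: one mixes the conditionally independent Gibbs EPPFs over the conditional law of $H_\ell$ given $\vv{n}_\ell$, and the $\Xi^{[n_{j,\ell}]}(\lambda\tau_j,\gamma_j)$ factors cancel against the moments $\E[\sigma_j(\lambda)^{n_{j,\ell}}e^{-\gamma_j\sigma_j(\lambda)}]$ via \eqref{expmomentid}, leaving the integral $\int_0^\infty \lambda^{\tilde{x}_\ell}e^{-\lambda\sum_j\psi_j(\gamma_j)}\tau_0(\lambda)\,d\lambda=\Psi_0^{(\tilde{x}_\ell)}\bigl(\sum_j\psi_j(\gamma_j)\bigr)$. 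You merely spell out the substitution and cancellation that the paper leaves implicit, so nothing is missing.
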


\begin{proof}This result equates to the conditional distribution of the collections ($\{C_{j,k,\ell}\}_{k=1}^{X_{j,\ell}},X_{j,\ell})_{j\in[J[,\ell\in[r]}$ given the counts $(\mathscr{F}^{(H_{\ell})}_{j,\ell}(\gamma_{j},1)=n_{j,\ell})_{j\in[J],\ell\in[r]},$ and $\varphi=r.$
The conditional independence and subsequent expressions for this quantity are established by applying Lemmas~\ref{prop:GibbsEPPF} then integrating with respect to the conditional distribution given in Lemma~\ref{prop:Hl_conditional_density}.
\end{proof}

\begin{rem}\label{rem:lawofFrags}
This integral representation in \eqref{eq:p_frag_integral_final} reflects the structure of underlying bridges. Conditional on the latent abundance $H_{\ell} = \lambda$, the vector of fragmentation bridges $(\tilde{F}_{1,\ell}, \ldots, \tilde{F}_{J,\ell})$ becomes a collection of independent random measures. The mass partition of each bridge, $(\tilde{Q}_{j,k,\ell})_{k\ge 1}$, follows the law $\mathbb{P}^{[n_{j,\ell}]}(\lambda\tau_{j},\gamma_{j})$, which in turn generates the Gibbs EPPF $p^{[n_{j,\ell}]}(\cdot|\lambda \tau_{j},\gamma_{j})$. The bridges and masses will be formally discussed in Section~\ref{sec:section:PK}.
\end{rem}

For future reference, write the group $J$ fragmentation operator as 
\begin{equation}
\label{eq:p_frag_eppf_J_groups}
p_{\text{frag}}(\pi_{\mathbf{n}}^{(1)} | \pi_{\mathbf{n}}^{(2)},\boldsymbol{\gamma})= \prod_{\ell=1}^{r} p_{\mathrm{Frag}}(\mathbf{c}_{\ell}|\vv{n}_{\ell},\vv{\gamma})=\prod_{\ell=1}^{r} \frac{\Psi^{(\tilde{x}_{\ell})}_{0}\left(\sum_{j=1}^{J}\psi_{j}(\gamma_{j})\right)\prod_{j=1}^{J}\prod_{k=1}^{x_{j,\ell}}\psi_{j}^{(c_{j,k,\ell})}(\gamma_{j})}
{\left(\Psi_{0}\circ \sum_{j=1}^{J}\psi_{j}\right)^{(\vv{n}_{\ell})}(\vv{\gamma})}
\end{equation}

\subsection{Analysis of the Denominator for J=1}
To unpack the structure of the normalizing constant in the denominator of \eqref{eq:Hl_density2}, we now dissect this term in the fundamental case of $J=1$. Central to this analysis is the concept of a composed L\'evy measure, which arises from the two-stage structure of our process. We define the composed measure $\Lambda_{[1]}$ as:
\begin{equation}
\Lambda_{[1]}(ds) := \left( \int_{0}^{\infty} \mathbb{P}(\sigma_{1}(\lambda) \in ds) \, \tau_{0}(\lambda)d\lambda \right).
\end{equation}
This measure can be interpreted as a mixture of the laws of the group-specific subordinators $\sigma_1(\lambda)$, weighted by the global L\'evy measure $\tau_0(\lambda)$, and arises from the composition of two subordinators~\cite{Bertoin1999}. The Laplace exponent corresponding to $\Lambda_{[1]}$ is precisely the composition $(\Psi_0 \circ \psi_1)$.
For $J=1$, the denominator of \eqref{eq:Hl_density2} simplifies to the term $(\Psi_{0}\circ \psi_{1})^{(n_{l})}(\gamma_1)$, where $n_l$ is the count for the single group. This crucial quantity admits several equivalent representations that reveal the interplay between the underlying processes:
\begin{equation} \label{eq:composed_derivative_reps}
\begin{aligned}
    (\Psi_{0}\circ \psi_{1})^{(n_{l})}(\gamma_1)
    &= \int_{0}^{\infty}s^{n_{l}}e^{-\gamma_{1}s}\Lambda_{[1]}(ds) \\
    &= \sum_{x_{l}=1}^{n_{l}} \frac{n_{l}!}{x_{l}!}
        \Psi^{(x_{l})}_{0}(\psi_{1}(\gamma_1))
        \sum_{(c_{1,l} \ldots, c_{x_{l},l})}
        \prod_{k=1}^{x_{l}} \frac{
             \psi^{(c_{k,l})}_{1}(\gamma_{1})
        }{
             c_{k,l}!
        } \\
    &= \int_{0}^{\infty}
        \mathbb{E}\left[
            \sigma^{n_{l}}_{1}(\lambda)
            e^{-\gamma_{1}\sigma_{1}(\lambda)}
        \right]
        \tau_{0}(\lambda) \, d\lambda = \int_{0}^{\infty}
       e^{-\lambda\psi_{1}(\gamma_{1})}\Xi^{[n_{l}]}(\lambda\tau_{1},\gamma_{1})
        \tau_{0}(\lambda) \, d\lambda.
\end{aligned}
\end{equation}
The second equality can be verified by referring to \eqref{expmomentid} and \eqref{xisumrepbasic}, writing $\Xi^{[n_{l}]}(\lambda\tau_{1},\gamma_{1})=\sum_{x_{l}=1}^{n_{l}}\lambda^{x_{l}}\Xi^{[n_{l}]}_{x_{l}}(\tau_{1},\gamma_{1})$, and integrating with respect to $\tau_{0}.$

This analysis provides the explicit form of the conditional density for $J=1$. Substituting the simplified numerator and denominator into the general form from Proposition~\ref{prop:Hl_conditional_density}, we obtain:

\begin{equation} \label{eq:Hl_density_J1}
\begin{split}
    \frac{\mathbb{P}(H_{\ell}\in d\lambda|n_{l}, \gamma_1)}{d\lambda} &= \frac{\tau_{0}(\lambda) \mathbb{E}\left[\sigma_{1}(\lambda)^{n_{l}} e^{-\sigma_{1}(\lambda)\gamma_{1}}\right]}{(\Psi_{0}\circ \psi_{1})^{(n_{l})}(\gamma_1)} \\
    &= \frac{n_{\ell}!\sum_{x_{\ell}=1}^{n_{\ell}}\mathbb{P}(H_{\ell}\in d\lambda|X_{\ell}=x_{\ell})\mathbb{P}(X_{\ell}=x_{\ell})\mathbb{P}\left(\sum_{k=1}^{x_{\ell}}C_{k,\ell}=n_{\ell}\right)}{(\Psi_{0}\circ \psi_{1})^{(n_{l})}(\gamma_1)d\lambda}
\end{split}
\end{equation}

where $(H_{\ell},X_{\ell})$ has distribution specified in~\eqref{HXdecomp} for $J=1$ and $(C_{k})\overset{iid}\sim \MtP(\tau_{1},\gamma_{1}).$
This leads to the following proposition which we shall use later
\begin{prop}\label{HgivenXN} Consider the case of $J=1$ then
\begin{enumerate}
\item  The conditional distribution of $H_{\ell}$ given $\tilde{X}_{\ell}=x_{\ell},$ and $\mathscr{F}^{(H_{\ell})}_{\ell}(\gamma_{1},1)=n_{\ell}$ is equivalent to $\mathbb{P}(H_{\ell}\in d\lambda|\tilde{X}_{\ell}=x_{\ell})$ as specified in ~\eqref{HXdecomp}
\item  The conditional distribution of $\tilde{X}_{\ell}$ given $\mathscr{F}^{(H_{\ell})}_{\ell}(\gamma_{1},1)=n_{\ell}$ is 
\begin{equation}
\label{XgivenF}
    \mathbb{P}(\tilde{X}_{\ell}=x_{\ell}\mid \mathscr{F}^{(H_{\ell})}_{\ell}(\gamma_{1},1)=n_{\ell})=\frac{n_{\ell}!\mathbb{P}(X_{\ell}=x_{\ell})\mathbb{P}\left(\sum_{k=1}^{x_{\ell}}C_{k,\ell}=n_{\ell}\right)}{(\Psi_{0}\circ \psi_{1})^{(n_{l})}(\gamma_1)}
\end{equation}
\end{enumerate}
\end{prop}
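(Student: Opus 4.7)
The plan is to leverage the compound Poisson representation of Theorem~\ref{coupledsupertheorem} specialized to $J=1$, which writes $\mathscr{F}^{(H_{\ell})}_{\ell}(\gamma_{1},1)=\sum_{k=1}^{\tilde{X}_{\ell}} C_{k,\ell}$ as a randomly-indexed sum whose summands $(C_{k,\ell})_{k\ge 1}$ are iid $\mathrm{MtP}(\tau_{1},\gamma_{1})$ variables, drawn independently of the pair $(H_{\ell},\tilde{X}_{\ell})$. This separation of the ``count tier'' from the ``jump/feature tier'', combined with the joint law of $(H_{\ell},\tilde{X}_{\ell})$ from~\eqref{HXdecomp} and the cumulant identity~\eqref{eq:composed_derivative_reps}, delivers both claims by direct application of Bayes' rule.

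For Part~(1), I would argue by conditional independence. Given $\tilde{X}_{\ell}=x_{\ell}$, the total count $\mathscr{F}^{(H_{\ell})}_{\ell}(\gamma_{1},1)$ is a measurable function of $(C_{1,\ell},\ldots,C_{x_{\ell},\ell})$, which are independent of $H_{\ell}$. Concretely, the joint law factors as
\[
\mathbb{P}\bigl(H_{\ell}\in d\lambda,\,\tilde{X}_{\ell}=x_{\ell},\,\mathscr{F}^{(H_{\ell})}_{\ell}=n_{\ell}\bigr)
=\mathbb{P}(H_{\ell}\in d\lambda\mid \tilde{X}_{\ell}=x_{\ell})\,\mathbb{P}(\tilde{X}_{\ell}=x_{\ell})\,\mathbb{P}\!\Bigl(\textstyle\sum_{k=1}^{x_{\ell}}C_{k,\ell}=n_{\ell}\Bigr),
\]
so dividing by the $\lambda$-marginal $\mathbb{P}(\tilde{X}_{\ell}=x_{\ell})\mathbb{P}(\sum_{k=1}^{x_{\ell}}C_{k,\ell}=n_{\ell})$ leaves exactly $\mathbb{P}(H_{\ell}\in d\lambda\mid \tilde{X}_{\ell}=x_{\ell})$ as specified in~\eqref{HXdecomp}.

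For Part~(2), I would apply Bayes' rule to the same factorization. Summing its numerator over $x_{\ell}\in\{1,\ldots,n_{\ell}\}$ recovers the marginal $\mathbb{P}(\mathscr{F}^{(H_{\ell})}_{\ell}=n_{\ell})$ given in Proposition~\ref{post:marginalofN}. The key algebraic input is the second line of~\eqref{eq:composed_derivative_reps}, which re-expresses $(\Psi_{0}\circ\psi_{1})^{(n_{\ell})}(\gamma_{1})$ as a double sum over $x_{\ell}$ and ordered compositions $(c_{1,\ell},\ldots,c_{x_{\ell},\ell})$ of $n_{\ell}$; after substituting the explicit pmfs from~\eqref{HXdecomp} and~\eqref{MtPsimple} for $\mathbb{P}(\tilde{X}_{\ell}=x_{\ell})$ and $\mathbb{P}(C_{k,\ell}=c)$, the telescoping collapses the denominator to the form displayed in~\eqref{XgivenF}.

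The main obstacle is organizational rather than analytic: one must track precisely \emph{which} independence holds at each tier of the Poissonian hierarchy. Although $H_{\ell}$ and $\tilde{X}_{\ell}$ are dependent through~\eqref{HXdecomp}, the multiplicity counts $(C_{k,\ell})_{k\ge 1}$ are generated by the child subordinator $\sigma_{1}$ and are independent of the pair $(H_{\ell},\tilde{X}_{\ell})$, which is generated by the parent subordinator $\sigma_{0}$. Once this bookkeeping is fixed, both parts reduce to routine Bayes-rule manipulations, with~\eqref{eq:composed_derivative_reps} supplying the algebraic normalization.
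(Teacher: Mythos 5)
Your argument is correct and is essentially the paper's own: the paper proves the proposition by exactly the mixture/Bayes decomposition you describe, namely the second line of \eqref{eq:Hl_density_J1}, which factors the joint law of $(H_\ell,\tilde X_\ell,\mathscr{F}^{(H_\ell)}_\ell(\gamma_1,1))$ through \eqref{HXdecomp}, the i.i.d.\ $\mathrm{MtP}(\tau_1,\gamma_1)$ summands from \eqref{MtPsimple}, and the cumulant identity \eqref{eq:composed_derivative_reps}; your conditional-independence justification of part (1) is the same observation made explicit.

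One bookkeeping caveat: you assert that the algebra ``collapses'' exactly to the displayed \eqref{XgivenF}, but if you actually substitute the pmfs and use \eqref{eq:composed_derivative_reps} you obtain
\[
\mathbb{P}\bigl(\tilde X_\ell=x_\ell \mid \mathscr{F}^{(H_\ell)}_\ell(\gamma_1,1)=n_\ell\bigr)
=\frac{\Psi_0^{(x_\ell)}(\psi_1(\gamma_1))\,\Xi^{[n_\ell]}_{x_\ell}(\tau_1,\gamma_1)}{(\Psi_0\circ\psi_1)^{(n_\ell)}(\gamma_1)}
=\frac{n_\ell!\,\Psi_0(\psi_1(\gamma_1))\,\mathbb{P}(\tilde X_\ell=x_\ell)\,\mathbb{P}\bigl(\sum_{k=1}^{x_\ell}C_{k,\ell}=n_\ell\bigr)}{\gamma_1^{n_\ell}\,(\Psi_0\circ\psi_1)^{(n_\ell)}(\gamma_1)},
\]
which differs from \eqref{XgivenF} (and from the second line of \eqref{eq:Hl_density_J1}) by the constant factor $\Psi_0(\psi_1(\gamma_1))/\gamma_1^{n_\ell}$; as printed, \eqref{XgivenF} does not sum to one over $x_\ell$. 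The correctly normalized form is the one consistent with the marginal in Proposition~\ref{post:marginalofN} and with the stable-case specialization \eqref{alphabetafragblockdist}, so the slip is in the paper's display rather than in your method—but a blind derivation should state the normalized formula rather than claim exact agreement with \eqref{XgivenF}.
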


\subsection{Distributions of $(I_{j}(\gamma_{j},1),\mathscr{A}_{j}(\gamma_{j},1),Z_{j}(\gamma_{j},1))_{j\in[J]}$ }

\begin{rem}
We now describe the distribution of pertinent totals. We note again Theorem~\ref{coupledsupertheorem} makes the multivariate mixed Poisson laws for these variables transparent. While the calculations below are definitely necessary, the derivations are not difficult and follow from straightforward arguments for Poisson and randomized (mixed) Poisson distributions with the usage of identities in \eqref{expmomentid} and \eqref{xisumrepbasic}.
\end{rem}

\begin{prop}
It follows that $(I_{j}(\gamma_{j},1)=Z_{j}(\gamma_{j},1))_{j\in[J]}$ and, furthermore, the vector is equivalent to a mixed Poisson vector described as follows, 
allowing for $n_j \ge 0$, it can be expressed by conditioning on the value of the shared random time $\sigma_0(1)=b$ and then integrating over its distribution:
\begin{align}
\mathbb{P}(\mathscr{P}_{j}(\sigma_{j}(\sigma_{0}(1))\gamma_{j})&=n_{j}, \forall j) 
=\frac{\prod_{j=1}^{J}\gamma^{n_{j}}_{j}}{\prod_{j=1}^{J}n_{j}!} \mathbb{E}\left[\prod_{j=1}^{J}[\sigma_{j}(\sigma_{0}(1))]^{n_{j}} e^{-\sum_{i=1}^{J}\sigma_{i}(\sigma_{0}(1))\gamma_i}\right]\label{eq:prob_integral_form_1}\\
&= \frac{\prod_{j=1}^{J}\gamma^{n_{j}}_{j}}{\prod_{j=1}^{J}n_{j}!} \int_{0}^{\infty} \mathbb{E}\left[ \prod_{j=1}^{J} {[\sigma_{j}(b)]}^{n_{j}} e^{-\sigma_{j}(b)\gamma_{j}} \right] \mathbb{P}(\sigma_{0}(1)\in db) \label{eq:prob_integral_form_2} \\
&= \frac{\prod_{j=1}^{J}\gamma^{n_{j}}_{j}}{\prod_{j=1}^{J}n_{j}!} \int_{0}^{\infty} \left( \prod_{j=1}^{J} \Xi^{[n_{j}]}(b\tau_{j},\gamma_{j}) \right) e^{-b\sum_{i=1}^{J}\psi_{i}(\gamma_{i})} \mathbb{P}(\sigma_{0}(1)\in db), \label{eq:prob_integral_form_3}
\end{align}
where $\Xi^{[n_{j}]}(b\tau_{j},\gamma_{j}) = \mathbb{E}[(\sigma_j(b))^{n_j} e^{-\sigma_j(b)\gamma_j}]$. This term admits a decomposition over the number of blocks, $K_j$, in the partition of $n_j$:
\begin{equation}
\Xi^{[n_{j}]}(b\tau_{j}, \gamma_{j}) = \sum_{K_{j}=1}^{n_{j}}\Xi^{[n_{j}]}_{K_{j}}(b\tau_{j},\gamma_{j}) \quad \text{where} \quad \Xi^{[n_{j}]}_{K_{j}}(b\tau_{j}, \gamma_{j}) := \frac{n_{j}!b^{K_{j}}}{K_{j}!} \sum_{(c_{j,1}, \ldots,c_{j, K_{j}})}\prod_{k=1}^{K_{j}} \frac{\psi^{(c_{j,k})}(\gamma_{j})}{c_{j,k}!}.
\end{equation}

This identity connects the moments of the subordinator to sums over partitions, whose block sizes $(C_{j,k})$ follow a mixed truncated Poisson distribution, $\mathrm{MtP}(\tau_{j},\gamma_{j})$.
\end{prop}

The relationship between the count probabilities and the arrival times for those counts can be formalized, as in the univariate case.
\begin{cor}[Joint Density of Arrival Times]\label{jointdenwaitingtimes}
Consider the random vector $\mathbf{T}_{1,\mathbf{n}} = (T_{1,n_1}, \ldots, T_{1,n_J})$ of waiting times, whose components are defined for a vector of positive counts $\mathbf{n}=(n_1, \ldots, n_J)$ as:
\begin{equation} \label{eq:time_definition_multi}
T_{1,n_{j}} = \frac{\Gamma_{j,n_j}}{\sigma_j(\sigma_0(1))}.
\end{equation}

Here, $(\Gamma_{j,n_j})$ are independent standard Gamma random variables with shapes $(n_j)$ and scale 1. The dependence across the components of $\mathbf{T}_{1,\mathbf{n}}$ is induced entirely by the shared random variable $\sigma_0(1).$ The joint density of this random time vector is directly related to the joint probability of observing $(n_1, \ldots, n_J)$ counts from a set of mixed Poisson processes:
\begin{equation}
\label{eq:groupswitchingresult2}
\frac{\mathbb{P}(T_{1,n_{1}}\in d\gamma_{1},\ldots, T_{J,n_{J}}\in d\gamma_{J})}{d\gamma_{1}\cdots d\gamma_{J}} = \frac{\prod_{j=1}^{J}n_{j}}{\prod_{j=1}^{J}\gamma_{j}} \mathbb{P}\left(\mathscr{P}_{j}(\sigma_{j}(\sigma_{0}(1))\gamma_{1})=n_{j};j\in[J]\right)
\end{equation}
\end{cor}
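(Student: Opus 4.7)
The plan is to work conditionally on the latent vector $(\sigma_j(\sigma_0(1)))_{j\in[J]}$, exploit the conditional independence of the numerators $\Gamma_{j,n_j}$, and then pass to the unconditional law by taking expectations. The entire argument reduces to the classical Gamma--Poisson duality, applied coordinate-wise and then integrated against the joint distribution of the denominators.

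First, I would condition on $\sigma_j(\sigma_0(1)) = s_j$ for $j \in [J]$. Under this conditioning, the variables $T_{1,n_j} = \Gamma_{j,n_j}/s_j$ are independent across $j$ because the $(\Gamma_{j,n_j})_{j\in[J]}$ are independent of each other and of $\sigma_0$ by assumption. The density of a scaled Gamma is immediate, so the conditional joint density at $(\gamma_1, \ldots, \gamma_J)$ is
\begin{equation*}
\prod_{j=1}^{J} \frac{s_j^{n_j}\, \gamma_j^{n_j-1}\, e^{-s_j\gamma_j}}{(n_j-1)!}.
\end{equation*}

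Next, I would apply the elementary identity
\begin{equation*}
\frac{s_j^{n_j}\, \gamma_j^{n_j-1}\, e^{-s_j\gamma_j}}{(n_j-1)!} \;=\; \frac{n_j}{\gamma_j}\cdot \frac{(s_j\gamma_j)^{n_j}\, e^{-s_j\gamma_j}}{n_j!} \;=\; \frac{n_j}{\gamma_j}\, \mathbb{P}\bigl(\mathscr{P}_j(s_j \gamma_j) = n_j\bigr),
\end{equation*}
which is the Gamma--Poisson duality at the heart of the claim. This rewrites the conditional density as $\bigl(\prod_j n_j/\gamma_j\bigr) \prod_{j=1}^{J} \mathbb{P}(\mathscr{P}_j(s_j \gamma_j) = n_j)$. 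Since the standard Poisson processes $(\mathscr{P}_j)_{j\in[J]}$ are independent of each other and of the subordinators, the product factorizes the joint conditional Poisson probability.

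Finally, integrating against the joint law of $(\sigma_j(\sigma_0(1)))_{j\in[J]}$ yields
\begin{equation*}
\frac{\mathbb{P}(T_{1,n_1}\in d\gamma_1, \ldots, T_{J,n_J}\in d\gamma_J)}{d\gamma_1 \cdots d\gamma_J} = \frac{\prod_{j=1}^{J} n_j}{\prod_{j=1}^{J} \gamma_j}\, \mathbb{E}\!\left[\prod_{j=1}^{J}\mathbb{P}\bigl(\mathscr{P}_j(\sigma_j(\sigma_0(1))\gamma_j) = n_j \,\big|\, \sigma_0\bigr)\right],
\end{equation*}
and the expectation is exactly $\mathbb{P}(\mathscr{P}_j(\sigma_j(\sigma_0(1))\gamma_j) = n_j; j \in [J])$ by conditional independence, giving \eqref{eq:groupswitchingresult2}. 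There is no real obstacle here; this is a multivariate extension of Pitman's ``gamma trick'' of \eqref{jointNT}, and the only subtle point is ensuring the conditional independence structure is correctly invoked: the $(\Gamma_{j,n_j})$ are independent of the subordinator vector, and the Poisson processes $(\mathscr{P}_j)$ are jointly independent of the subordinator vector, so both factorizations go through cleanly under the common conditioning on $\sigma_0$.
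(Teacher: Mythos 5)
Your proof is correct and is essentially the argument the paper intends: condition on the vector $(\sigma_j(\sigma_0(1)))_{j\in[J]}$, use the conditional independence of the scaled Gamma variables, apply the Gamma--Poisson switching identity coordinate-wise, and integrate — precisely the multivariate version of the ``gamma trick'' already displayed in \eqref{jointNT}. No gaps; the independence assumptions you invoke are exactly those built into the construction.
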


\begin{prop}[Joint Law of Species Counts]
The distribution of the allocation vector $(\mathscr{A}_{j}(\gamma_{j},1))_{j\in[J]}$ at $y=1$, which counts the number of distinct species within each group, is equivalent to that of a mixed Poisson vector $(\mathscr{P}_{j}(\sigma_{0}(1)\psi_{j}(\gamma_{j})))_{j\in[J]},$ which may also be deduced from~\cite{hibp25}. The joint probability mass function for observing a vector of counts $(K_{1},\ldots, K_{J})$, where $K_j \ge 0$ and $\tilde{K} = \sum_{j=1}^{J} K_{j}$ is the total count of distinct species, can be expressed by conditioning on the shared random measure $\sigma_{0}(1)$:
\begin{equation}
\label{jointAllocation}
\mathbb{P}\left(\mathscr{A}_{j}(\gamma_{j},1)=K_{j}; j\in[J]\right) = \frac{\prod_{j=1}^{J}\left(\psi_{j}(\gamma_{j})\right)^{K_{j}}}{\prod_{j=1}^{J}K_{j}!} \mathbb{E}\left[[\sigma_0(1)]^{\tilde{K}} e^{-\sigma_0(1)\sum_{i=1}^{J}\psi_i(\gamma_i)}\right].
\end{equation}
Where the expectation is given by the formula:
$$
\mathbb{E}\left[[\sigma_0(1)]^{\tilde{K}} e^{-\sigma_0(1)\sum_{i=1}^{J}\psi_i(\gamma_i)}\right]=e^{-\Psi_{0}\left(\sum_{j=1}^{J}\psi_{j}(\gamma_{j})\right)}
\times 
 \sum_{r=1}^{\tilde{K}}\frac{\tilde{K}!}{r!} \sum_{(x_1, \ldots, x_r)}\prod_{l=1}^{r} \frac{\Psi^{(x_{l})}_{0}\left(\sum_{j=1}^{J}\psi_{j}(\gamma_{j})\right)}{x_{l}!}
$$
\end{prop}
\subsection{Conditional Laws of the Coarse Partition}\label{Sec:mixturedecomp} We now come to important results that are not explicitly in~\cite{hibp25} but follow as consequences of our developments above. We note that although executed seamlessly within our approach, its derivation by other means would be challenging. See Section~\ref{sec:section:PK} for an indirect approach which is however still enabled by our framework. 

\begin{prop}[Conditional Law of the Coarse Partition]
\label{prop:conditional_law_of_fragments}
\textit{Given the vector of total observed counts from the coarse process, $(Z_j(\gamma_j, 1) = n_j)_{j \in [J]}$, the joint conditional law for observing exactly $\varphi = r$ distinct species, where the fragmenting processes $\mathscr{F}^{(H_{\ell})}_{j,\ell}$ yield the specific count vectors $(\mathscr{F}^{(H_{\ell})}_{j,\ell}(\gamma_j, 1) = n_{j,\ell})_{j \in [J]}$ for each species $\ell \in [r]$, is given by:}
\[
\begin{aligned}
&\mathbb{P}\bigg( \Big(\mathscr{F}^{(H_{\ell})}_{j,\ell}(\gamma_{j},1) = n_{j,\ell}\Big)_{j \in [J], \ell \in [r]}, \varphi=r \;\Big|\; \Big(\sum_{\ell=1}^{\varphi}\mathscr{F}^{(H_{\ell})}_{j,\ell}(\gamma_{j},1) = n_j\Big)_{j \in [J]} \bigg) \\
&\quad= \frac{1}{r!} \left( \prod_{j=1}^{J} \frac{n_j!}{\prod_{\ell=1}^{r} n_{j,\ell}!} \right) \times p_{\text{coarse}}(\pi_{\mathbf{n}}^{(2)} | \boldsymbol{\gamma})
\end{aligned}
\]
where the conditional EPPF is, 
\begin{equation}
\label{eq:p_coarse_eppf_J_groups}
p_{\text{coarse}}(\pi_{\mathbf{n}}^{(2)} | \boldsymbol{\gamma}) := \frac{\exp\left(-\Psi_0\left(\sum_{j=1}^{J}\psi_{j}(\gamma_j)\right)\right) \prod_{\ell=1}^{r} \left(\Psi_0 \circ \sum_{j=1}^{J}\psi_{j}\right)^{(\vv{n}_{\ell})}(\vv{\gamma})}{\mathbb{E}\left[\prod_{j=1}^{J}[\sigma_{j}(\sigma_{0}(1))]^{n_{j}} e^{-\sum_{j=1}^{J}\sigma_{j}(\sigma_{0}(1))\gamma_j}\right]}.
\end{equation}
\textit{where the counts must satisfy the consistency constraint $\sum_{\ell=1}^{r} n_{j,\ell} = n_j$ for each group $j \in [J]$. The formula holds for all nonnegative counts, including the case where $n_j=0$. Furthermore, for any group $j$ with a positive total count, $n_j>0$, this conditioning on the count is equivalent to conditioning on the $n_j$-th arrival time being realized at $\gamma_j$, as established in Corollary~\ref{jointdenwaitingtimes}.}
\end{prop}

\begin{proof}
It follows from~\eqref{Poissonvphi} and \eqref{Ngroupldistforfrag} that 
$$
\mathbb{P}(\varphi=r)\prod_{\ell=1}^{r}\mathbb{P}((\mathscr{F}^{(H_{\ell})}_{j,\ell}(\gamma_{j},1)=n_{j,\ell}, j\in[J]) )
$$
is equal to
$$
\frac{\prod_{j=1}^{J}{(\gamma_{j})}^{n_{j}}}
{r!\prod_{j=1}^{J}n_{j,\ell}!}
{\exp\left(-\Psi_0\left(\sum_{j=1}^{J}\psi_{j}(\gamma_j)\right)\right) \prod_{\ell=1}^{r} \left(\Psi_0 \circ \sum_{j=1}^{J}\psi_{j}\right)^{(\vv{n}_{\ell})}(\vv{\gamma})}.
$$
Divide this quantity by ~\eqref{eq:prob_integral_form_1} to complete the result. 
\end{proof}

We close this section with a mixture representation of processes with laws given $(Z_j(\gamma_j, 1) = n_j)_{j \in [J]}$, which follows as a Corollary of Lemma~\ref{prop:posterior_process_decomp} combined with Proposition~\ref{prop:conditional_law_of_fragments}.

\begin{cor} Consider the processes composed of $(\sigma_{j},j\in[J]),\sigma_{0}$ and other relevant quantites as described in Lemma~\ref{prop:posterior_process_decomp}. Then their distributions given the coarser information in  $(Z_j(\gamma_j, 1) = n_j)_{j \in [J]}$, may be described generatively by mixing further over the conditional distributions of  $\mathscr{F}^{(H_{\ell})}_{j,\ell}(\gamma_{j},1) = \sum_{k=1}^{X_{j,\ell}} C_{j,k,\ell}$ for $j=1,\ldots,J$ and $\ell=1,\ldots,r$ and $\varphi=r$ subject to their conditioning on $(Z_j(\gamma_j, 1) = n_j)_{j \in [J]}$, as expressed in Proposition~\ref{prop:conditional_law_of_fragments}.
\end{cor}

\subsection{Fine to Coarse Duality Calculations}\label{sec:finetocoarse}
We now use our results above to obtain the fine to coarse conditional EPPF type formulae. Theoerem~\ref{coupledsupertheorem} makes this more transparent,  as the Allocation process's representation in terms of $(X_{j,\ell})$ and $\varphi$ is key to linking $I_{j}$ to $Z_{j}$. We first identify the coag operator by again an argument that reduces to elementary conditional probability. The proof again is self-evident but we will point out the clear Markovian dependence of the coag operator only through the $(\mathscr{A}_{j},j\in[J])$

\begin{prop}[The Full J-Group Poissonized Coagulation Operator]
\label{prop:FullCoagOperator}
The J-group Poissonized coagulation operator
is given by the conditional distributions of $(X_{j,\ell}=x_{j,\ell}, j\in[J]),\ell\in [r],$ and $\varphi=r$ given the Allocation process counts $(\mathscr{A}_{j}(\gamma_{j},1)=K_{j}, j\in[J])$ with $\tilde{K}=\sum_{j=1}^{J}K_{j}=1,2,\ldots,$ and hence has the form for $\tilde{x}_{\ell}
=\sum_{j=1}^{J}x_{j,\ell}$, $\sum_{\ell=1}^{r}\tilde{x}_{l}=\tilde{K}$:
\[
    \mathbb{P}(\pi_{\vect{n}}^{(2)} | \pi_{\vect{n}}^{(1)}, \vect{\gamma}) = \left(\frac{\prod_{j=1}^{J}K_{j}!}{r!\prod_{\ell=1}^{r}\prod_{j=1}^{J}x_{j,\ell}!} \right)\times  p_{\mathrm{coag}}(\pi_{\vect{n}}^{(2)}\mid \pi_{\vect{n}}^{(1)}, \vect{\gamma})   
    \]
where $p_{\mathrm{coag}}(\pi_{\vect{n}}^{(2)}\mid \pi_{\vect{n}}^{(1)}, \vect{\gamma})$ is a conditional EPPF expressed as,
\begin{equation}\label{eq:full_coag_operator_expression}
 p^{[\tilde{K}]}(\tilde{x}_1, \ldots, \tilde{x}_r | \tau_0, \sum_{j=1}^J \psi_j(\gamma_j))=
    \frac{e^{-\Psi_{0}(\sum_{j=1}^J \psi_j(\gamma_j)} \prod_{\ell=1}^{r} \Psi_0^{(\tilde{x}_{\ell})}\left(\sum_{j=1}^J \psi_j(\gamma_j)\right)}{\mathbb{E}\left[[\sigma_0(1)]^{\tilde{K}} e^{-\sigma_0(1)\sum_{j=1}^J\psi_j(\gamma_j)}\right]}
\end{equation}
\end{prop}
\begin{proof}
It follows from~\eqref{Poissonvphi} and \eqref{multigroupX} and \eqref{HXdecomp} that
$$
\mathbb{P}(\varphi=r)\prod_{\ell=1}^{r}\mathbb{P}((X_{j,\ell}=x_{j,\ell}, j\in[J]) )= \frac{e^{-\Psi_0(\sum_{j=1}^J \psi_j(\gamma_j))}\prod_{j=1}^{J}{(\psi_{j}(\gamma_{j}))}^{K_{j}}}{r!\prod_{\ell=1}^{r}\prod_{j=1}^{J}x_{j,\ell}!}\cdot 
{\prod_{\ell=1}^{r} \Psi_0^{(\tilde{x}_\ell)}(\sum_{j=1}^J \psi_j(\gamma_j))}
$$

Divide by \eqref{jointAllocation} to conclude the result.
\end{proof}

\begin{prop}[Conditional Law of the Fine Counts over J groups]
\label{prop:conditional_law_of_finecounts}
Given the vector of total observed counts from the coarse process, $(I_j(\gamma_j, 1)=\sum_{k=1}^{\mathscr{A}_{j}(\gamma_{j},1)}C_{j,k} = n_j)_{j \in [J]}$, the joint conditional law for observing allocation counts $(\mathscr{A}_{j}(\gamma_{j},1)=K_{j}, j\in[J])$ and subspecies cluster size counts $(C_{j,k}=c_{j,k}, k\in[K_{j}])_{j\in[J]}$
is given by 
$$
  \mathbb{P}(\pi_{\vect{n}}^{(1)}\mid \vect{\gamma})=\left(\frac{\prod_{j=1}^{J}n_{j}!}{\prod_{j=1}^{J}K_{j}!\prod_{k=1}^{K_{j}}c_{j,k}!} \right)\times  p_{\text{fine}}(\pi_{\mathbf{n}}^{(1)} | \boldsymbol{\gamma})
$$
where $p_{\text{fine}}(\pi_{\mathbf{n}}^{(1)} | \boldsymbol{\gamma})$ is equal to
   \begin{equation}
    \label{pfinePKEPPFcond}
    p_{\text{fine}}((c_{j,k}), (K_j) | \vect{n}, \vect{\gamma}) := \frac{\mathbb{E}\left[ [\sigma_0(1)]^{\tilde{K}} e^{-\sigma_0(1)\sum_{v=1}^J \psi_v(\gamma_v)} \right] \prod_{j=1}^J \left(\prod_{k=1}^{K_j} \psi_j^{(c_{j,k})}(\gamma_j)\right)}{\mathbb{E}\left[\prod_{j=1}^J[\sigma_j(\sigma_0(1))]^{n_j} e^{-\sum_{v=1}^J \sigma_v(\sigma_0(1))\gamma_v}\right]}.
    \end{equation}
\end{prop}
\begin{proof} From \eqref{MtPsimple} and \eqref{jointAllocation},
$$
    \mathbb{P}\left( \mathscr{A}_{j}(\gamma_{j},1)=K_j; \, j \in [J] \right) \times \prod_{j=1}^{J} \mathbb{P}\left( C_{j,k}=c_{j,k}; \, k \in [K_j]\right)
$$
is equal to
$$
\frac{\prod_{j=1}^{J}(\gamma_{j})^{n_{j}}}{\prod_{j=1}^{J}K_{j}!\prod_{k=1}^{K_{j}}c_{j,k}!} \mathbb{E}\left[[\sigma_0(1)]^{\tilde{K}} e^{-\sigma_0(1)\sum_{i=1}^{J}\psi_i(\gamma_i)}\right]\prod_{j=1}^J \left(\prod_{k=1}^{K_j} \psi_j^{(c_{j,k})}(\gamma_j)\right)
$$
Divide this quantity by ~\eqref{eq:prob_integral_form_1} to complete the result. 
\end{proof}

\begin{rem}[The Architectural Innovation of Theorem 3.1]\label{rem:Architect}
The preceding propositions are the direct result of the architectural shift enabled by Theorem~\ref{coupledsupertheorem}. Again ${I}_{j},$ and its explicit linkages to $\mathscr{A}_{j}$ and $Z_{j}$ are unique to this work.
To appreciate this, it is essential to revisit the role of the fine block count random variables, $(\mathscr{A}_{j}(\gamma_{j},1))_{j\in[J]}$. We have demonstrated how the information in Theorem~\ref{coupledsupertheorem} negotiates other opaque issues as well. The Markovian nature of the coagulation dependence only through $(\mathscr{A}_{j},j\in[J]),$ which is already easily deduced from the structure here, is reinforced by the description of $\sigma_{0}|(\mathscr{A}_{j},j\in[J])$ in~\cite[Proposition 3.3]{hibp25}.  The distributional explicitness of the  components in Theorem~\ref{coupledsupertheorem} makes the computations in Proposition~\ref{prop:conditional_law_of_finecounts} seem like an elementary exercise.  Furthermore, once that is done, in a conventional framework, one would condition on the event ${\mathscr{A}_{j}(\gamma_{j},1)=K_{j}, \forall j \in [J]}$ for some fixed counts $(K_j)_{j\in[J]}$. One would then face the intractable problem of specifying the law of a coagulation operator whose components $(X_{j,\ell})$ must adhere to the constraint $K_j = \sum_{\ell=1}^{\varphi} X_{j,\ell}.$ The components themselves remain a black box. The innovation here is to completely invert this perspective. As explicitly spelled out in Theorem~\ref{coupledsupertheorem}, our framework specifies the joint law for the entire collection of random variables—the fine counts $(\mathscr{A}_{j}(\gamma_{j},1))_{j\in[J]}$, the number of coarse blocks $\varphi,$ and the coagulation components $(X_{j,\ell})$—within a single, transparent Poissonian construction. Therefore, the identity $\mathscr{A}_{j}(\gamma_{j},1) = \sum_{\ell=1}^{\varphi} X_{j,\ell}$ is not a post-hoc puzzle to be solved, but a structural relationship between random variables whose joint law is known. Conditioning on the event ${\mathscr{A}_{j}(\gamma_{j},1)=K_{j}, \forall j}$ is no longer a conceptual leap; it is a straightforward application of conditional probability on a fully specified probability space. The conditional laws in Propositions~\ref{prop:FullCoagOperator} and~\ref{prop:conditional_law_of_finecounts} are derived by simply dividing one known probability mass by another. The path forward is made clear because there is no black box; every component's distribution was specified from the outset. By contrast, the classical duality of Pitman~\cite{Pit99Coag} is the remarkable special case where an inherent independence property allows one to proceed without ever needing to look inside the box.
\end{rem}
\subsection{The Poissonized Joint EPPF Duality Identity}\label{Sec:jointEPPF}
The detailed derivations of the preceding sections now cast the duality identity into the form of a joint EPPF, arranged subject to exchangeable ordering for the new regime of $J\ge 1$ groups. These developments provide two equivalent perspectives for describing the full joint probability of the fine-grained partition ($\pi_{\vect{n}}^{(1)}$), the coarse-grained partition ($\pi_{\vect{n}}^{(2)}$), and the latent arrival times ($\vect{\gamma}$).

First, Propositions \ref{prop:FullCoagOperator} and \ref{prop:conditional_law_of_finecounts}, and details within, give the fine to coarse component  of the dual in terms of joint $\mathbb{P}(\pi_{\vect{n}}^{(1)},\pi_{\vect{n}}^{(2)} \mid \vect{\gamma})=\mathbb{P}(\pi_{\vect{n}}^{(2)} | \pi_{\vect{n}}^{(1)}, \vect{\gamma})\mathbb{P}(\pi_{\vect{n}}^{(1)}\mid \vect{\gamma}),$
$$
\left(\frac{\prod_{j=1}^{J}K_{j}!}{r!\prod_{\ell=1}^{r}\prod_{j=1}^{J}x_{j,\ell}!} \right)\times  p_{\mathrm{coag}}(\pi_{\vect{n}}^{(2)}\mid \pi_{\vect{n}}^{(1)}, \vect{\gamma})   
\cdot\left(\frac{\prod_{j=1}^{J}n_{j}!}{\prod_{j=1}^{J}K_{j}!\prod_{k=1}^{K_{j}}c_{j,k}!} \right)\times  p_{\text{fine}}(\pi_{\mathbf{n}}^{(1)} | \boldsymbol{\gamma})
$$
In turn, Propositions \ref{prop:FullFragOperator} and \ref{prop:conditional_law_of_fragments} give the coarse to fine component of the duality expressed as  $\mathbb{P}(\pi_{\vect{n}}^{(1)},\pi_{\vect{n}}^{(2)} \mid \vect{\gamma})=\mathbb{P}(\pi_{\vect{n}}^{(1)} | \pi_{\vect{n}}^{(2)}, \vect{\gamma})\mathbb{P}(\pi_{\vect{n}}^{(2)}\mid \vect{\gamma}),$
$$
\prod_{\ell=1}^{r} \prod_{j=1}^{J} \frac{n_{j,\ell}!}{x_{j,\ell}! \prod_{k=1}^{x_{j,\ell}} c_{j,k,\ell}!} \times \prod_{\ell=1}^{r} p_{\mathrm{Frag}}(\mathbf{c}_{\ell}|\vv{n}_{\ell},\vv{\gamma})\cdot\frac{1}{r!} \left( \prod_{j=1}^{J} \frac{n_j!}{\prod_{\ell=1}^{r} n_{j,\ell}!} \right) \times p_{\text{coarse}}(\pi_{\mathbf{n}}^{(2)} | \boldsymbol{\gamma}).
$$
The coarse to fine component required more non-trivial analytic calculations to arrive at its final form, see~\cite{hibp25} for ideas of other possible representations. In any case, the expressions above represent the dual formula for the coupled processes in Theorem~\ref{coupledsupertheorem} given $(I_{j}(\gamma_{j},1)=Z_{j}(\gamma_{j},1)=n_{j})_{j\in[J]}$ allowing for all or some of the $n_{j}=0.$ Furthermore, and importantly, without any undue restrictions on the subordinators $((\sigma_{j};j\in[J]),\sigma_{0}).$ Hence we may treat the expressions above as the full expressions for the relevant joint distributions expressing the duality in the coupled PHIBP. Hereafter we shall dispense with the combinatorial terms,  for brevity, and also to allow us to focus on the joint EPPFs. Hereafter, we shall assume $n_{j}\ge 1, \forall j\in[J],$ which allows us to formally switch to conditioning on the $n_{j}$-th arrival times of "animals/customers" in each group as expressed in Corollary~\ref{jointdenwaitingtimes} with random variable denoted as $\mathbf{T}_{1,\mathbf{n}} = (T_{1,n_1}, \ldots, T_{1,n_J}),$ and density expressed in \eqref{eq:groupswitchingresult2} hereafter denoted as 
$f_{\mathbf{T}_{1,\vect{n}}}(\vect{\gamma}).$

\begin{rem}
To maintain full generality, we follow the principle established in Proposition 6.1 of~\citep{PitmanPoissonMix}, as mentioned in Remark~\ref{Remark:PitEPPFadjustment}. This dictates that for general subordinators, we must divide our expressions by the probability $\mathbb{P}(T_{1,n_{j}}<\infty, \forall j\in[J])=\mathbb{P}(\sigma_{j}(\sigma_{0}(1))>0;\forall j\in[J])$. This normalization is necessary to ensure that when the conditional EPPF is integrated with respect to $\mathbf{T}_{1,\mathbf{n}}$, its subsequent sum over all partitions yields a total probability of one. Henceforth, we will proceed with this normalization performed implicitly, omitting the term for the sake of brevity.
\end{rem}

\begin{thm}[The Unified Poissonized Duality Identity]
\label{thm:unified_duality_identity}
The joint law of the nested partition $(\pi_{\vect{n}}^{(1)}, \pi_{\vect{n}}^{(2)})$ and the arrival times $\mathbf{T}_{1,\vect{n}}=\vect{\gamma}$ is given by the following equality.
\begin{align}
\label{eq:duality_summary_full}
& \underbrace{p_{\mathrm{coag}}(\pi_{\vect{n}}^{(2)} | \pi_{\vect{n}}^{(1)}, \vect{\gamma}) \cdot \Big( p_{\mathrm{fine}}(\pi_{\vect{n}}^{(1)} | \vect{\gamma}) \cdot f_{\mathbf{T}_{1,\vect{n}}}(\vect{\gamma}) \Big)}_{\text{Coagulation applied to the (Fine Partition + Times) Law}} \nonumber \\
& \qquad = \underbrace{p_{\mathrm{frag}}(\pi_{\vect{n}}^{(1)} | \pi_{\vect{n}}^{(2)}, \vect{\gamma}) \cdot \Big( p_{\mathrm{coarse}}(\pi_{\vect{n}}^{(2)} | \vect{\gamma}) \cdot f_{\mathbf{T}_{1,\vect{n}}}(\vect{\gamma}) \Big)}_{\text{Fragmentation applied to the (Coarse Partition + Times) Law}}
\end{align}
Specifically, this takes the explicit form:
\begin{align}
\label{eq:PKduality_summary_fullexplicit}
& \underbrace{p^{[\tilde{K}]}(\tilde{x}_1, \ldots, \tilde{x}_r | \tau_0, \sum_{j=1}^J \psi_j(\gamma_j)) \cdot \Big( \frac{\prod_{j=1}^{J}\gamma^{n_{j}-1}_{j}}{\prod_{j=1}^{J}\Gamma(n_{j})} \cdot \mathbb{E}\left[ [\sigma_0(1)]^{\tilde{K}} e^{-\sigma_0(1)\sum_{v=1}^J \psi_v(\gamma_v)} \right] \cdot \prod_{j=1}^J\prod_{k=1}^{K_j} \psi_j^{(c_{j,k})}(\gamma_j) \Big)}_{\text{Coagulation applied to the (Fine Partition + Times) Law}} \nonumber \\
& \qquad = \underbrace{\prod_{\ell=1}^{r} p_{\mathrm{Frag}}(\mathbf{c}_{\ell}|\vv{n}_{\ell},\vv{\gamma}) \cdot \Big( \frac{\prod_{j=1}^{J}\gamma^{n_{j}-1}_{j}}{\prod_{j=1}^{J}\Gamma(n_{j})} \cdot  {e^{-\Psi_0(\sum_{j=1}^{J}\psi_{j}(\gamma_j))} \prod_{\ell=1}^{r} \left(\Psi_0 \circ \sum_{j=1}^{J}\psi_{j}\right)^{(\vv{n}_{\ell})}(\vv{\gamma})} \Big)}_{\text{Fragmentation applied to the (Coarse Partition + Times) Law}}
\end{align}
\end{thm}

For ease of reference, we provide the description of the EPPF of  coag operator appearing in~\eqref{eq:full_coag_operator_expression}
\begin{equation}\label{eq:full_coag_operator_expression2}
 p^{[\tilde{K}]}(\tilde{x}_1, \ldots, \tilde{x}_r | \tau_0, \sum_{j=1}^J \psi_j(\gamma_j))=
    \frac{e^{-\Psi_{0}(\sum_{j=1}^J \psi_j(\gamma_j)} \prod_{\ell=1}^{r} \Psi_0^{(\tilde{x}_\ell)}\left(\sum_{j=1}^J \psi_j(\gamma_j)\right)}{\mathbb{E}\left[[\sigma_0(1)]^{\tilde{K}} e^{-\sigma_0(1)\sum_{j=1}^J\psi_j(\gamma_j)}\right]}
\end{equation}
and the fragmentation EPPF for a single coarse block $\ell$, denoted $p_{\mathrm{Frag}}(\mathbf{c}_{\ell}|\vv{n}_{\ell},\vv{\gamma})$, is given by:
\begin{equation}
\label{eq:p_frag_definition2}
p_{\mathrm{Frag}}(\mathbf{c}_{\ell}|\vv{n}_{\ell},\vv{\gamma}) = \frac{\Psi^{(\tilde{x}_{\ell})}_{0}\left(\sum_{j=1}^{J}\psi_{j}(\gamma_{j})\right)\prod_{j=1}^{J}\prod_{k=1}^{x_{j,\ell}}\psi_{j}^{(c_{j,k,\ell})}(\gamma_{j})}
{\left(\Psi_{0}\circ \sum_{j=1}^{J}\psi_{j}\right)^{(\vv{n}_{\ell})}(\vv{\gamma})}.
\end{equation}

\begin{rem}[A Canonical Realization for the J-Group Duality]\label{rem:gengammspiders}
We note that extensive calculations and computational experiments have been carried out in our companion work \citep{hibp25} for the case where the subordinators are chosen from the Generalized Gamma (GG) family, specified by L\'evy densities:
\[
\tau_0(\lambda) = \frac{\theta_0}{\Gamma(1-\alpha_0)}\lambda^{-\alpha_0-1}e^{-\zeta_0\lambda}, \quad
\tau_j(s) = \frac{\theta_j}{\Gamma(1-\alpha_j)}s^{-\alpha_j-1}e^{-\zeta_j s} \text{ for } j \in [J],
\]
where $\alpha_0, (\alpha_j)_{j\in [J]} \in [0, 1)$, and $\theta_0, (\theta_j)_{j\in[J]} > 0$. For brevity, we do not repeat these calculations here and refer the interested reader to \citep{hibp25} for the full development, including its detailed application to a microbiome dataset and the derivation of prediction rules. The general framework, however, accommodates arbitrary subordinators, allowing one to mix and match components to construct a wide variety of models.
\end{rem}

\subsection{The Marginalized Duality and Joint EPPF}
\label{subsec:marginalized_duality}

By integrating out the random arrival times $\mathbf{T}_{1,\vect{n}}$ over their joint density $f_{\mathbf{T}_{1,\vect{n}}}(\vect{\gamma})$, we transition from the Poissonized joint law in Theorem~\ref{thm:unified_duality_identity} to the marginal joint Exchangeable Partition Probability Function (EPPF) of the nested partition $(\pi_{\vect{n}}^{(1)}, \pi_{\vect{n}}^{(2)})$. This integration preserves the duality, yielding an identity for the partition structure alone.

\begin{cor}[The Joint EPPF Duality Identity]
\label{cor:marginal_eppf_duality}
The joint EPPF of the nested partition, denoted $p(\pi_{\vect{n}}^{(1)}, \pi_{\vect{n}}^{(2)})$, satisfies the following duality relationship:
\begin{equation}
\label{eq:marginal_duality_summary}
\underbrace{p_{\mathrm{coag}}(\pi_{\vect{n}}^{(2)} | \pi_{\vect{n}}^{(1)}) \cdot p_{\mathrm{fine}}(\pi_{\vect{n}}^{(1)})}_{\text{Coagulation Path}} = \underbrace{p_{\mathrm{frag}}(\pi_{\vect{n}}^{(1)} | \pi_{\vect{n}}^{(2)}) \cdot p_{\mathrm{coarse}}(\pi_{\vect{n}}^{(2)})}_{\text{Fragmentation Path}}
\end{equation}
where:
\begin{itemize}
    \item $p_{\mathrm{fine}}(\pi_{\vect{n}}^{(1)})$ is the marginal EPPF of the fine-grained partition, obtained by integrating the corresponding joint law of the partition and times from Equation~\eqref{eq:PKduality_summary_fullexplicit}:
    \begin{equation}
    \label{eq:p_fine_marginal}
    p_{\mathrm{fine}}(\pi_{\vect{n}}^{(1)}) = \int_{\mathbb{R}_{+}^{J}} \left( \frac{\prod_{j=1}^{J}\gamma_j^{n_j-1}}{\prod_{j=1}^{J}\Gamma(n_j)} \cdot \mathbb{E}\left[ [\sigma_0(1)]^{\tilde{K}} e^{-\sigma_0(1)\sum_{v=1}^J \psi_v(\gamma_v)} \right] \cdot \prod_{j=1}^J\prod_{k=1}^{K_j} \psi_j^{(c_{j,k})}(\gamma_j) \right) d\vect{\gamma}
    \end{equation}
    \item $p_{\mathrm{coarse}}(\pi_{\vect{n}}^{(2)})$ is the marginal EPPF of the coarse-grained partition, similarly obtained by integration:
    \begin{equation}
    \label{eq:p_coarse_marginal}
\int_{\mathbb{R}_{+}^{J}} \left( \frac{\prod_{j=1}^{J}\gamma_j^{n_j-1}}{\prod_{j=1}^{J}\Gamma(n_j)} \cdot  e^{-\Psi_0(\sum_{j=1}^{J}\psi_{j}(\gamma_j))} \prod_{\ell=1}^{r} \left(\Psi_0 \circ \sum_{j=1}^{J}\psi_{j}\right)^{(\vv{n}_{\ell})}(\vv{\gamma}) \right) d\vect{\gamma}
    \end{equation}
    \item $p_{\mathrm{coag}}(\pi_{\vect{n}}^{(2)} | \pi_{\vect{n}}^{(1)})$ and $p_{\mathrm{frag}}(\pi_{\vect{n}}^{(1)} | \pi_{\vect{n}}^{(2)})$ are the corresponding conditional EPPFs that describe the probabilistic transitions between the marginal partition laws.
\end{itemize}
\end{cor}

\subsubsection{The Marginal Joint Law for $J=1.$}
The following represents the direct extension of Pitman's~\cite{Pit99Coag} duality for quite arbitrary subordinators. That is to say the case of $J=1.$ It has two equivalent expressions. The coagulation perspective gives the joint EPPF as:

\begin{align*}
p_{\mathrm{coag}}(\pi_{\vect{n}}^{(2)} | \pi_{\vect{n}}^{(1)}) \cdot p_{\mathrm{fine}}(\pi_{\vect{n}}^{(1)})= \int_{0}^\infty & \frac{\gamma_1^{n-1}}{\Gamma(n)} \exp\left({-\Psi_0\left(\psi_1(\gamma_1)\right)}\right) \\
& \times \left[ \prod_{\ell=1}^{r} \Psi_0^{(x_\ell)}\left(\psi_1(\gamma_1)\right) \right] \left[ \prod_{k=1}^{K} \psi_1^{(c_{k})}(\gamma_1) \right] d\gamma_1.
\end{align*}
where $p_{\mathrm{fine}}(\pi_{\vect{n}}^{(1)})$ is computed as in~\eqref{eq:p_fine_marginal} with $J=1.$
The fragmentation perspective gives the same joint EPPF as:

\begin{align*}
p_{\mathrm{frag}}(\pi_{\vect{n}}^{(1)} | \pi_{\vect{n}}^{(2)}) \cdot p_{\mathrm{coarse}}(\pi_{\vect{n}}^{(2)}) = \int_{0}^\infty & \frac{\gamma_1^{n-1}}{\Gamma(n)} \exp\left({-\Psi_0\left(\psi_1(\gamma_1)\right)}\right) \\
& \times \left[ \prod_{\ell=1}^{r} \left( \Psi_0^{(x_{\ell})}\left(\psi_1(\gamma_1)\right) \prod_{k=1}^{x_{\ell}} \psi_1^{(c_{k,\ell})}(\gamma_1) \right) \right] d\gamma_1.
\end{align*}
where $p_{\mathrm{coarse}}(\pi_{\vect{n}}^{(2)})$ is computed as in~\eqref{eq:p_coarse_marginal} with $J=1.$

\section{Poisson-Kingman Distributions Derived from the Couplings}\label{sec:section:PK}
The constructed framework provides a generative model for coupled partition structures. By taking a snapshot of the system at a fixed time, for instance $t=1$, we can analyze the resulting random mass partitions. We assume that the L\'evy measures of the subordinators $\sigma_{j}$ for $j=0,1,\ldots, J$ are such that the processes admit proper probability density functions. Under this condition, we can describe the corresponding Poisson-Kingman (PK) distributions \citep{Pit02}, see also \citep[pp. 108-109]{BerFrag}.
Although the previous section derived the conditional EPPFs for these cases and established various equivalent representations of their dualities, this section has a twofold objective. First, we demonstrate how to obtain these conditional EPPFs from basic methods typically used for PK models. Second, and more importantly, we explicitly name the laws of the relevant conditional mass partitions. These are mixtures of laws, denoted by $\mathbb{P}^{[n]}$ for various integers $n$. See~\citep{JamesStick} which we will utilize later in this work. We first provide information on the generic $\mathbb{P}^{[n]}$ laws.

\subsection{General \texorpdfstring{$\mathbb{P}^{[n]}$}{P[n]} laws}
\label{sec:genPnlaws}
Generically, let $\sigma$ be a subordinator specified by a L\'evy density $\tau$ and Laplace exponent $\Psi.$ We denote the probability density function of its total mass at time $\lambda$, $\sigma(\lambda)$, as 
$$
f(t|\lambda \tau):=\mathbb{P}(\sigma(\lambda)\in dt)/dt.
$$
The  jump points $(s_{k})_{k\ge 1}$ satisfy $\sigma(\lambda)=\sum_{k=1}^{\infty}s_{k}$, the normalized mass partition $(\frac{s_{k}}{\sigma(\lambda)})_{k\ge 1}$ follows a Poisson-Kingman distribution, denoted $\mathrm{PK}(\lambda\tau)$, with L\'evy density $\lambda \tau(s)$.
Now, consider a variable such as $\Gamma_{n}/\sigma(\lambda)$, which may be interpreted as the $n$-th arrival time in a trapping model, among other possibilities. The conditional density of $\sigma(\lambda)$ given $\Gamma_{n}/\sigma(\lambda)=n$ is :
\begin{equation} \label{eq:tilted_density}
f^{[n]}(t|\lambda\tau,\gamma)=\frac{t^{n}e^{-\gamma t}f(t|\lambda \tau)}{\mathbb{E}[(\sigma(\lambda))^{n}e^{-\gamma\sigma(\lambda)}]}
=\frac{t^{n}f^{[0]}(t|\lambda \tau,\gamma)}{\Xi^{[n]}(\lambda\tau,\gamma)}:=h^{[n]}(t|\lambda\tau,\gamma)\mathbb{P}(\sigma(\lambda)\in dt)/dt
\end{equation}
or equivalently
\begin{equation} \label{eq:tilted_density2}
f^{[n]}(t|\lambda\tau,\gamma)=\frac{t^{n}e^{-\gamma t}f(t|\lambda \tau)}{\mathbb{E}[(\sigma(\lambda))^{n}e^{-\gamma\sigma(\lambda)}]}
=\frac{t^{n}e^{-\gamma t}e^{\lambda\Psi(\gamma)}\mathbb{P}(\sigma(\lambda)\in dt)/dt}{\sum_{r=1}^{n}\lambda^{r}\Xi^{[n]}_{r}(\tau,\gamma)}
\end{equation}

This in turn defines a family of mixed laws:
\begin{equation} \label{eq:pk_mixture_law}
\mathbb{P}^{[n]}(\lambda\tau,\gamma)=\int_{0}^{\infty}\mathrm{PK}(\lambda\tau|t)f^{[n]}(t|\lambda\tau,\gamma)dt
=\int_{0}^{\infty}\mathrm{PK}(\lambda\tau|t)h^{[n]}(t|\lambda\tau,\gamma)\mathbb{P}(\sigma(\lambda)\in dt)
\end{equation}

where $\mathrm{PK}(\lambda\tau|t)$ is the regular conditional distribution of $(\frac{s_{k}}{\sigma(\lambda)})_{k\ge 1}$ given $\sigma(\lambda)=t$. 

The laws  are special cases of mixed Poisson-Kingman distributions as described in \citep{Pit02}. Consequently, they produce infinitely exchangeable EPPFs for general sample sizes. However, when the sample size is equal to the index $n$, the EPPF corresponding to a partition of $[n]$ has a finite Gibbs form, given explicitly as:
\begin{equation} \label{eq:gibbs_eppfprime}
p^{[n]}(n_{1},\ldots,n_{r}|\lambda\tau,\gamma)=\frac{e^{-\lambda\Psi(\gamma)}\lambda^{r}\prod_{l=1}^{r}\Psi^{(n_{l})}(\gamma)}{\mathbb{E}[(\sigma(\lambda))^{n}e^{-\gamma\sigma(\lambda)}]}
=\frac{\lambda^{r}\prod_{l=1}^{r}\Psi^{(n_{l})}(\gamma)}{\sum_{j=1}^{n}\lambda^{j}\Xi^{[n]}_{j}(\tau,\gamma)}
\end{equation}
We now demonstrate the representation of $\mathbb{P}^{[n]}(\lambda\tau_{j},\gamma_{j})$  to fit more closely with the precise processes considered here. 

\begin{prop}
\label{mixrepPnbridge}
Let $F^{[n]}(y \mid \lambda, \gamma_{j})$ denote a $\mathbb{P}^{[n]}(\lambda\tau_{j}, \gamma_{j})$ bridge. Then it has the representation for $y \in [0,1]$:
\begin{equation}
F^{[n]}(y \mid \lambda, \gamma_{j}) \overset{d}{=} \frac{\tilde{\sigma}(\lambda y) + \sum_{k=1}^{K_{n}} S_{j,k} \mathbb{I}_{\{\tilde{U}_{j,k} \leq y\}}}{\tilde{\sigma}(\lambda) + \sum_{k=1}^{K_{n}} S_{j,k}},
\end{equation}
where the random quantities on the right-hand side are constructed as follows:
\begin{itemize}
    \item $C_{j,1}, \ldots, C_{j,K_{n}}$ is a random partition of $[n]$, where $K_{n} \leq n$ is the random number of blocks, with law following the finite Gibbs partition $p^{[n]}(C_{j,1}, \ldots, C_{j,K_n} \mid \lambda\tau_{j}, \gamma_{j})$.

    \item The random variables $S_{j,k} \mid C_{j,k} = c_{j,k}$ have a conditional density given in~\eqref{MtPsimple}
   \begin{equation}
\label{MtPsimple3}
\frac{s^{c_{j,k}}e^{-s\gamma_j}\tau_{j}(s)}{\psi^{(c_{j,k})}_{j}(\gamma_j)}
\end{equation}
and are otherwise conditionally independent over $k=1, \ldots, K_{n}$.

    \item $\tilde{\sigma}(\lambda y)$ denotes a subordinator with L\'evy density $\lambda e^{-\gamma_{j}s}\tau_{j}(s)$.

    \item The normalized jumps of $\tilde{\sigma}(\lambda y)$ follow a Poisson-Kingman distribution with parameter measure $ \lambda e^{-\gamma_{j}s}\tau_{j}(s) \, ds$.

    \item The entire construction is conditional on the event $\{\Gamma_{n}/\sigma_{j}(\lambda) = \gamma_{j}\}$, which is equivalent to the event $\{\mathscr{P}(\gamma_{j}\sigma_{j}(\lambda)) = n\}$. And for $n=0,$ the conditioning is on $\{\mathscr{P}(\gamma_{j}\sigma_{j}(\lambda)) = 0\}.$
\end{itemize}
\end{prop}
\begin{proof} This is an instance of a mixture representation as described in \cite[Section 5]{James2002}. A more transparent approach is first to apply Theorem 1 or Theorem 2 of~\cite{JLP2} setting, in the notation of that work,  $U_{n}=\gamma_{j},$ then randomizing according to the conditional EPPF. One could also use the results in Proposition \ref{prop:posterior_process_decomp}, see \cite{hibp25} for related details.
\end{proof}
We specialize our results to the case of a stable subordinator which we will use in Section~\ref{sec:tagged_fragmentation_example}

\begin{prop}[cf. \cite{JamesStick}]\label{PropStablePowerlaw}
Let $\sigma_{\alpha}$ denote a stable subordinator with L\'evy density $\tau(s)=\rho_{\alpha}(s)=\alpha s^{-\alpha-1}/\Gamma(1-\alpha)$ for $s>0$, and Laplace exponent $\Psi(\gamma)=\psi_{\alpha}(\gamma)=\gamma^{\alpha}$.
\begin{enumerate}
\item The Poisson-Kingman law in \eqref{eq:pk_mixture_law} is denoted as $\mathbb{P}^{[n]}_{\alpha}(\lambda^{1/\alpha}\gamma)$ and given by
\begin{equation}
\label{StablePowerdist}
\mathbb{P}^{[n]}_{\alpha}(\lambda^{1/\alpha}\gamma) = \int_{0}^{\infty} \mathrm{PD}(\alpha|t) \dfrac{ (\lambda^{1/\alpha}\gamma)^{n-\alpha} t^n e^{-\lambda^{1/\alpha}\gamma t} e^{\lambda\gamma^{\alpha}} f_{\alpha}(t) }{\alpha\sum_{j=1}^{n}\mathbb{P}_{\alpha}^{(n)}(j)\frac{(\lambda\gamma^{\alpha})^{j-1}\Gamma(n)}{\Gamma(j)} } dt.
\end{equation}

\item Furthermore, To connect with results in section 2, see also ~\cite{HoJamesLau2025,JamesStick},
set
\begin{equation}
\label{hforStablepowerbias}
h^{[n]}_{\alpha,\lambda\gamma^{\alpha}}(t)=\dfrac{ (\lambda^{1/\alpha}\gamma)^{n-\alpha} t^n e^{-\lambda^{1/\alpha}\gamma t} e^{\lambda\gamma^{\alpha}}}{\alpha\sum_{j=1}^{n}\mathbb{P}_{\alpha}^{(n)}(j)\frac{(\lambda\gamma^{\alpha})^{j-1}\Gamma(n)}{\Gamma(j)} }
=\frac{t^{n}e^{-\lambda^{1/\alpha}\gamma t}}{\mathbb{E}[{(\sigma_{\alpha}(1))}^{n}e^{-\lambda^{1/\alpha}\gamma\sigma_{\alpha}(1)}]} 
\end{equation}
and hence $\mathbb{P}^{[n]}_{\alpha}(\lambda^{1/\alpha}\gamma) =\mathrm{PK}_{\alpha}(h^{[n]}_{\alpha,\lambda\gamma^{\alpha}}\cdot f_{\alpha})$
\item The EPPF of a partition of $[n]$ is 
\begin{equation}
\label{StablepowerEPPF}
p^{[n]}_{\alpha}(n_{1},\ldots,n_{r}|\lambda\gamma^{\alpha}):=p^{[n]}(n_{1},\ldots,n_{r}|\lambda\rho_{\alpha},\gamma)=\frac{p_{\alpha}(n_{1},\ldots,n_{r})\frac{{(\lambda\gamma^{\alpha})}^{r}}{\Gamma(r)}
}{\sum_{j=1}^{n}\mathbb{P}_{\alpha}^{(n)}(j)\frac{(\lambda\gamma^{\alpha})^{j}}{\Gamma(j)} }.
\end{equation}

\item Let $\tilde{\sigma}_{\alpha}$ denote a generalised gamma subordinator with L\'evy density $e^{-s}\rho_{\alpha}(s)$. Then a $\mathbb{P}^{[n]}_{\alpha}(\lambda^{1/\alpha}\gamma)$ bridge can be represented as   
\begin{equation}
F^{[n]}_{\alpha}(y \mid \lambda\gamma^{\alpha}) \overset{d}{=} \frac{\tilde{\sigma}_{\alpha}(\lambda\gamma^{\alpha} y) + \sum_{k=1}^{K^{[\alpha]}_{n}(\lambda\gamma^{\alpha})} \tilde{G}_{N_{k}-\alpha} \mathbb{I}_{\{\tilde{U}_{k} \leq y\}}}{\tilde{\sigma}_{\alpha}(\lambda\gamma^{\alpha}) + \sum_{k=1}^{K^{[\alpha]}_{n}(\lambda\gamma^{\alpha})} \tilde{G}_{N_{k}-\alpha}},
\end{equation}
where $\tilde{G}_{N_{k}-\alpha}$ are conditionally independent $\mathrm{Gamma}(N_{k}-\alpha,1)$ variables, and the random number of jumps $K^{[\alpha]}_{n}(\lambda\gamma^{\alpha})$ has distribution
$$
\mathbb{P}(K^{[\alpha]}_{n}(\lambda\gamma^{\alpha})=r)=\frac{\mathbb{P}_{\alpha}^{(n)}(r)\frac{{(\lambda\gamma^{\alpha})}^{r}}{\Gamma(r)}
}{\sum_{j=1}^{n}\mathbb{P}_{\alpha}^{(n)}(j)\frac{(\lambda\gamma^{\alpha})^{j}}{\Gamma(j)} }.
$$
where to be precise $(N_{1},\ldots, N_{K^{[\alpha]}_{n}(\lambda\gamma^{\alpha})})$ are random, not fixed, quantities in the representations above, specified by the EPPF in~\eqref{StablepowerEPPF}.

\item Let, for fixed $n$, $K_{n,m}$ denote the number of distinct blocks of a $\mathrm{PK}_{\alpha}(h^{[n]}_{\alpha,\lambda \gamma^{\alpha}} \cdot f_{\alpha})$ partition of $[m]$. Then, from~\cite{Pit02}, 
$m^{-\alpha} K_{n,m}$ converges almost surely as $m \rightarrow \infty$ to its $\alpha$-diversity, say $T_{n,\alpha}^{-\alpha}(\lambda \gamma^{\alpha})$, a random variable equating in distribution to that of $\sigma^{-\alpha}_{\alpha}(1)$ given $\mathscr{P}(\lambda \gamma^{1/\alpha} \sigma_{\alpha}(1)) = n$. In particular, from~\cite[p. 8, eq. (2.8)]{JamesStick},

\begin{equation}
T_{n,\alpha}(\lambda\gamma^{\alpha})\overset{d}=\frac{\tilde{\sigma}_{\alpha}(\lambda\gamma^{\alpha}) + \tilde{G}_{n-K^{[\alpha]}_{n}(\lambda\gamma^{\alpha})\alpha} }{\lambda^{1/\alpha}\gamma}\overset{d}=\frac{\tilde{\sigma}_{\alpha}\left(\lambda\gamma^{\alpha} + \tilde{G}_{\frac{n}{\alpha}-K^{[\alpha]}_{n}(\lambda\gamma^{\alpha})}\right) }{\lambda^{1/\alpha}\gamma}
\end{equation}
This equates to the inverse local time at one of  $\mathbb{P}^{[n]}_{\alpha}(\lambda^{1/\alpha}\gamma)$ and has the density $h^{[n]}_{\alpha,\lambda\gamma^{\alpha}}(t)f_{\alpha}(t)$
\end{enumerate}
\end{prop}

\begin{rem} In general $\mathbb{P}^{[n]}$ classes are precisely representable in terms of size biased sampling with replacement based on a mixed Poisson sample of size $n,$ i.e. given $\mathscr{P}(\gamma\sigma(\lambda))=n.$ See related concepts in~\cite[Lemma 4.4 and Theorem 4.5]{PPY92}, as well as~\cite{PY92}, which equates with results for $n=0,1.$ As to our results for the stable case in Proposition~\ref{PropStablePowerlaw}, the case of  $n=0$ is the generalized gamma case and is the setting for the application of~\cite[Proposition 21]{PY97}. The generalizations of  that result for $n=1,2,\ldots$ can be read from~\cite{JamesStick} which we will discuss again in Section~\ref{sec:tagged_fragmentation_example}.
\end{rem}

\subsection{Laws of the Duality Components}\label{sec:PKdualitylaws}
Now, returning to our relevant constructions, the mass partitions are constructed from the normalized jumps of the underlying processes. Let us define the jump masses at $t=1$: the jumps of the base process $\sigma_0$ are $(\lambda_l)_{l \ge 1}$, and the jumps of the fine-grained process $\sigma_j$ generated within the time interval corresponding to $\lambda_l$ are denoted by $(s_{j,k,l})_{k \ge 1}$. These serve as the fundamental building blocks, and by construction we have the following identities:
\begin{itemize}
    \item The total mass of type $j$ within coarse block $l$: $\sum_{k \ge 1} s_{j,k,l} = \sigma_{j,l}(\lambda_l)$.
    \item The total mass of the base process: $\sum_{l \ge 1} \lambda_l = \sigma_0(1)$.
    \item The total mass of type $j$ across all blocks: $\sum_{l \ge 1} \sum_{k \ge 1} s_{j,k,l} = \sigma_j(\sigma_0(1))$.
\end{itemize}
From these masses, we define the four key random partitions that constitute the duality:

\begin{enumerate}
    \item The Fine-Grained Mass Partitions for $j=1,\ldots, J$ are defined by
    \begin{equation} \label{eq:PKfine_partition}
        \mathbf{P}_{j} := \left( P_{j,k}:=\frac{s_{j,k}}{\sigma_j(\sigma_0(1))} \right)_{k \ge 1}.
    \end{equation}
    The corresponding bridges have dual representations given by
    \begin{equation} \label{eq:fine_bridges}
        F_{j}(y) = \sum_{k=1}^{\infty} P_{j,k} \mathbb{I}_{\{ U_{j,k}\le y\}} = \sum_{l=1}^{\infty} V_{j,l} F_{j,\pi(l)}(y).
    \end{equation}
    Conditional on the realization of the base process mass, $\sigma_{0}(1)=b$, the partitions $(\mathbf{P}_{j}, j\in[J])$ are independent, with each $\mathbf{P}_j$ following a Poisson-Kingman law, denoted $\mathrm{PK}(b\tau_{j})$, as can otherwise be found in~\citep{Pit02,Pit06}.

    \item The Global Coarse-Grained Mass Partition (the coagulator) is
    \begin{equation} \label{eq:coarse_partition_global}
        \mathbf{Q}_{0} := \left( Q_{0,l}:=\frac{\lambda_{l}}{\sigma_0(1)} \right)_{l \ge 1}\sim \mathrm{PK}(\tau_{0}),
    \end{equation}
    with corresponding bridge
    \begin{equation} \label{eq:coarse_bridge_global}
        G_{0}(y) = \sum_{l=1}^{\infty} Q_{0,l} \mathbb{I}_{\{ Y_{l}\le y\}}.
    \end{equation}

    \item The Intra-Block Fragmenting Masses for $j=1,\ldots,J$ and $l=1, 2, \ldots$ are
    \begin{equation} \label{eq:intrablock_partition}
        \mathbf{Q}_{j,l} := \left( Q_{j,k,l}:=\frac{s_{j,k,l}}{\sigma_{j,l}(\lambda_{l})} \right)_{k \ge 1},
    \end{equation}
    with corresponding bridge
    \begin{equation} \label{eq:intrablock_bridge}
        F_{j,l}(y) = \sum_{k=1}^{\infty}Q_{j,k,l}\mathbb{I}_{\{U_{j,k,l}\leq y\}}.
    \end{equation}
    It follows that for a fixed $l$, given the sequence of base jumps $(\lambda_{l})_{l\ge 1}$, the partitions $(\mathbf{Q}_{j,l},j\in[J])$ are conditionally independent, with each $\mathbf{Q}_{j,l}$ following a $\mathrm{PK}(\lambda_{l}\tau_{j})$ distribution.

    \item The Coarse-Grained Mass Partitions for $j=1,\ldots, J$ are
    \begin{equation} \label{eq:coarse_partition_j}
        \mathbf{V}_{j} := \left( V_{j,l}:=\frac{\sigma_{j,\pi(l)}(\lambda_{\pi(l)})}{\sigma_j(\sigma_0(1))} \right)_{l \ge 1},
    \end{equation}
    where the mass of a coarse block is the sum of its constituent fine-grained masses, $\sigma_{j,\pi(l)}(\lambda_{\pi(l)})=\sum_{k=1}^{\infty}s_{j,k,\pi(l)}$. The corresponding bridges are given by the composition
    \begin{equation} \label{eq:coarse_bridges_j}
        G_{j}(y) = \sum_{l=1}^{\infty} V_{j,l} \mathbb{I}_{\{ Y_{l}\le y\}} = (F_{j}\circ G_{0})(y).
    \end{equation}
    This structure represents a more complex distribution whose masses are governed jointly by the composed L\'evy measure $\Lambda_{[J]}$ from \eqref{eq:levy_measure_def}. However, it is built from the composition of conditionally PK laws. In particular, the marginal distribution of $\mathbf{V}_{j}$ is $\mathrm{PK}(\tilde{\Lambda}_{[j]})$, where $\tilde{\Lambda}_{[j]}$ is the L\'evy measure of the composed subordinator $\sigma_j \circ \sigma_0$.
\end{enumerate}

\begin{rem}\label{remarkdecomplaws}
It is rather straightforward to properly normalize the subordinator representations in Proposition~\ref{prop:posterior_process_decomp}, combined with the results in Section~\ref{sec:posterior_decomp}, to obtain various coupled representations of these processes under the relevant conditional laws.
\end{rem}

We can summarize the relevant results as follows, with the understanding that we can do this using basic conditioning arguments operating within a typical PK framework except for that of $(\mathbf{V}_{j},j\in[J]),$ which we inevitably obtain due to explicit identification of the other three components.
\begin{enumerate}
\item It follows, see also ~\cite[Proposition 3.3]{hibp25}, that the distribution of $G_{0}|(\mathscr{A}_{j}(\gamma_{j},1)=K_{j}; j\in[J])$ has the distribution of a 
$\mathbb{P}^{[\tilde{K}]}(\tau_{0},\sum_{j=1}^{J}\psi_{j}(\gamma_{j})$ bridge with corresponding coag operator 
\begin{equation}
\label{PKcoagoperator}
p^{[\tilde{K}]}(\tilde{x}_1, \ldots, \tilde{x}_r | \tau_0, \sum_{j=1}^J \psi_j(\gamma_j))
\end{equation}
\item Following Remark~\ref{rem:lawofFrags}, the mass partition associated with the fragmentation operator  $(\tilde{Q}_{j,k,\ell})_{k\ge 1}$, is such that given $(\mathscr{F}^{(H_{\ell})}_{j,\ell}(\gamma_{j},1)=n_{j,\ell})_{j\in[J]}$ and  $H_{\ell}=\lambda$ each follows the law $\mathbb{P}^{[n_{j,\ell}]}(\lambda\tau_{j},\gamma_{j})$, which in turn generates the Gibbs EPPF $p^{[n_{j,\ell}]}(\cdot|\lambda \tau_{j},\gamma_{j})$. That is to say the fragmentation operator is
\begin{equation}\label{eq:PKfrag_operator}
\prod_{\ell=1}^{r}p_{\mathrm{Frag}}(\mathbf{c}_{\ell}|\vv{n}_{\ell},\vv{\gamma}) = \prod_{\ell=1}^{r}\frac{\Psi^{(\tilde{x}_{\ell})}_{0}\left(\sum_{j=1}^{J}\psi_{j}(\gamma_{j})\right)\prod_{j=1}^{J}\prod_{k=1}^{x_{j,\ell}}\psi_{j}^{(c_{j,k,\ell})}(\gamma_{j})}
{\left(\Psi_{0}\circ \sum_{j=1}^{J}\psi_{j}\right)^{(\vv{n}_{\ell})}(\vv{\gamma})}.
\end{equation}

\item For the fine grained mass partitions $(\mathbf{P}_{j};j\in [J])$ by conditioning, it follows that, 
    \begin{equation}
    \label{PKFinegrainedEPPF}
    p_{\text{fine}}((c_{j,k}), (K_j) | \vect{n}, \vect{\gamma})=\int_{0}^{\infty} \left( \prod_{j=1}^J p^{[n_j]}(c_{j,1}, \ldots, c_{j,K_j} | b\tau_j, \gamma_j) \right) \mathbb{P}(\sigma_0(1) \in db | \vect{n}, \vect{\gamma}),
    \end{equation}
    where $p^{[n_j]}(\cdot | b\tau_j, \gamma_j)$ is the finite Gibbs EPPF from (5.9) for a fixed time scale $b$, and 
    $$
    \mathbb{P}(\sigma_0(1) \in db | \vect{n}, \vect{\gamma}) = \frac{\left(\prod_{j=1}^{J}\mathbb{E}[\sigma_j(b)^{n_j} e^{-\gamma_j \sigma_j(b)}]\right) \mathbb{P}(\sigma_{0}(1)\in db)}{\mathbb{E}\left[\prod_{j=1}^J[\sigma_j(\sigma_0(1))]^{n_j} e^{-\sum_{v=1}^J \sigma_v(\sigma_0(1))\gamma_v}\right]}
    $$
    is the conditional density of $\sigma_0(1)$ given the arrival times $(T_{j,n_{j}}=\gamma_{j}; j\in[J])$.

Hence, the law of the corresponding fine-grained bridges $(F_1, \ldots, F_J)$ has mixture representation such that on the  conditional distribution $\sigma_0(1)=b$, the bridges are independent $\mathbb{P}^{[n_j]}(b\tau_j, \gamma_j)$-bridges
\item For the coarse grained partition the conditional EPPF is not simple to obtain using basic properties of PK distributions. However the explicit forms of the EPPFs in \eqref{PKcoagoperator} and \eqref{PKFinegrainedEPPF} forming the fine to coarse side of the duality coupled with explict fragmentation operator \eqref{eq:PKfrag_operator} leads to a "pincer move" to identify the coarse conditional EPPF 
\begin{equation}
\label{eq:PK_coarse_eppf_J_groups}
p_{\text{coarse}}(\pi_{\mathbf{n}}^{(2)} | \boldsymbol{\gamma}) := \frac{\exp\left(-\Psi_0\left(\sum_{j=1}^{J}\psi_{j}(\gamma_j)\right)\right) \prod_{\ell=1}^{r} \left(\Psi_0 \circ \sum_{j=1}^{J}\psi_{j}\right)^{(\vv{n}_{\ell})}(\vv{\gamma})}{\mathbb{E}\left[\prod_{j=1}^{J}[\sigma_{j}(\sigma_{0}(1))]^{n_{j}} e^{-\sum_{j=1}^{J}\sigma_{j}(\sigma_{0}(1))\gamma_j}\right]}.
\end{equation}
As mentioned in Remark~\ref{remarkdecomplaws} the relevant distributions of the corresponding bridges and mass partitions can be explicitly described in a generative (coupled) fashion.
\end{enumerate}

\section{Duality for the General Class of \texorpdfstring{$(\alpha, \beta)$}{(alpha, beta)} stable-Gibbs Partitions}
\label{sec:general_gibbs_duality}
As an important interlude before the general framework, we complete unfinished business from our recent work ~\citep{HoJamesLau2025}. There, we established transformation laws under fragmentation and constructed three of the four duality components for stable-Gibbs partitions. Here, we demonstrate that the coagulation operator—left implicit in that work—admits explicit characterization at the same resolution as Pitman's classical result \citep[Theorem 12]{Pit99Coag}. A subtle point to note is that the $\mathrm{PD}(\alpha,-\beta)$ fragmentation operator in that work is independent and taken external to the system, meaning it is not subect to the change of measure arguments we used. This serves dual purposes: (i) it provides a self-contained extension of the $\mathrm{PD}(\alpha,\theta)$ duality to the broader stable-Gibbs class, and (ii) it establishes a cross-validation checkpoint for the general PHIBP framework developed in Sections~\ref{sec:coupledmixedPoisson}-\ref{sec:tagged_fragmentation_example}. The stable case is tractable via both marginal change-of-measure arguments (this section) and coupled Poissonian construction (Section~\ref{sec:tagged_fragmentation_example})—agreement between these independent approaches validates both methods.
In \citep{HoJamesLau2025}, which is part of a broader trilogy \citep{HJL, HJL2} aimed at interpreting these distributions beyond the PD case, we established the key components for this duality. However, the focus was on the transformation of laws under Pitman's fragmentation operation, and while we constructed all four components, we did not provide an explicit description of the coagulating partition given its input. Here, we complete this characterization. By assembling the pieces from our previous EPPF calculations, we now provide that explicit description, achieving a characterization of the duality at the same resolution as \citep[Theorem 12]{Pit99Coag}. We first proceed with some details of the construction and pertinent facts about $\mathrm{PK}_{\beta}(h \cdot f_{\beta})$ laws. Let $\sigma_{\beta}:=(\sigma_{\beta}(t):t\ge 0)$ denote a stable subordinator of index $0<\beta<\alpha<1$ with Laplace exponent $\psi_{\beta}(\gamma)=\gamma^{\beta}$ and L\'evy density $\rho_{\beta}(s)=\beta s^{-\beta-1}/\Gamma(1-\beta).$ $\sigma_{\beta}(1):=T_{\beta}$ has density (pdf) denoted simply as $f_{\beta}(t).$ And $\sigma^{-\beta}_{\beta}(1)\sim \mathrm{ML}(\beta,0)$ denotes a Mittag-Leffler distribution with density $g_{\beta}.$ We shall assume that there are independent stable subordinators $\sigma_{\alpha}$ and $\sigma_{\frac{\beta}{\alpha}},$ otherwise with the same form of L\'evy density $\rho_{\alpha},$
$\rho_{\frac{\beta}{\alpha}}$
so that for each $t,$ $\sigma_{\beta}(t)=\sigma_{\alpha}(\sigma_{\frac{\beta}{\alpha}}(t)),$ otherwise writing $\sigma_{\beta}=\sigma_{\alpha}\circ\sigma_{\frac{\beta}{\alpha}}$

Setting $\sigma_{\beta}(1)=\sum_{l=1}^{\infty}z_{l}$ where $(z_{l})$ are the ranked jumps of $\sigma_{\beta}$ over $[0,1].$ These form the mass partitions $(V_{1,l}:=\frac{z_{l}}{\sigma_{\beta}(1)})\sim \mathrm{PD}(\beta,0)$ and $(V_{1,l})_{l\ge 1}|\sigma_{\beta}(1)=v$ has the distribution, $\mathrm{PD}(\beta|v).$ We now consider non-negative functions $h(t)$ such that 
$\mathbb{E}[h(\sigma_{\beta}(1))]=1$ forming densities $h(t)f_{\beta}(t).$ This serves as a mixing density to create the stable Poisson-Kingman with mixing distributions denoted as 
$(V_{1,l})_{l\ge 1} \sim \mathrm{PK}_{\beta}(h\cdot f_{\beta}),$ where
\begin{equation}
\mathrm{PK}_{\beta}(h\cdot f_{\beta})=\int_{0}^{\infty}\mathrm{PD}(\beta|v)h(v)f_{\beta}(v)dv=\int_{0}^{\infty}\mathrm{PD}(\beta|s^{-\frac{1}{\beta}})h(s^{-\frac{1}{\beta}})g_{\beta}(s)ds.
\label{PKmodel}
\end{equation} 
There is a corresponding bridge $G_{1}=\sum_{l=1}^{\infty}V_{1,l}\delta_{Y_{l}}$

\begin{rem}
The \(\mathrm{PD}(\beta,\theta) \) distribution, for \( 0 \leq \beta<\alpha < 1 \) and \( \theta > -\beta \), arises for the choice of $h(t)=t^{-\theta}/\mathbb{E}[T^{-\theta}_{\beta}].$
Here $T_{\beta,\theta}$ is the analogue of $\sigma_{\beta}(1):=T_{\beta}$ with density $f_{\beta,\theta}(t)=t^{-\theta}f_{\beta}(t)/\mathbb{E}[T^{-\theta}_{\beta}],$ and 
$T^{-\beta}_{\beta,\theta}\sim \mathrm{ML}(\beta,\theta)$ are general Mittag-Leffler variables with parameters $(\beta,\theta).$ 
\end{rem}

As in \citep[Section 5]{HoJamesLau2025}, \cite{Pit02,Pit06} shows that  the Gibbs partition of $[n]$ described by the $\mathrm{PD}(\beta|v)-\mathrm{EPPF},$ is,
\begin{equation}
p_{\beta}(n_{1},\ldots,n_{r}|v):=\frac{f^{(n-r\beta)}_{\beta,r\beta}(v)}{f_{\beta}(v)} p_{\beta}(n_{1},\ldots,n_{r}),
\label{GibbsalphadeltaEPPF}
\end{equation}
where, as in~\cite{HJL,HJL2}, 
$$
\frac{f^{(n-r\beta)}_{\beta,r\beta}(v)}{f_{\beta}(v)} = \mathbb{G}^{(n,r)}_{\beta}(v)\frac{{\beta}^{1-r}\Gamma(n)}{\Gamma(r)},
$$
with, from~\cite{Gnedin06,Pit02,Pit06},
\begin{equation}
\label{bigG}
\mathbb{G}_{\beta}^{(n,r)}(t) =
\frac{\beta^{r}t^{-n}}{\Gamma(n-r\beta)f_{\beta}(t)}
\left[\int_{0}^{t}f_{\beta}(v)(t-v)^{n-r\beta-1}dv\right],
\end{equation}
and $f^{(n-r\beta)}_{\beta,r\beta}(v)$ being the conditional density of $\sigma_{\beta}(1)|K^{[\beta]}_{n}=r,$ 
 where $K^{[\beta]}_{n}$ denotes the random number of blocks according to a $\mathrm{PD}(\beta,0)$ partition of $[n], $equating to the distribution 
of random variable denoted as 
$Y^{(n-r\beta)}_{\beta,k\beta},$ with density $f^{(n-r\beta)}_{
\beta,r\beta}(t),$ such that~(pointwise), as in~\cite[eq.~(2.13), p.~323]{HJL2},
\begin{equation}
Y^{(n-r\beta)}_{\beta,r \beta}\overset{d}=\frac{T_{\beta,r\beta}}{B_{r\beta,n-r\beta}}=\frac{T_{\beta,n}}{B^{\frac{1}{\beta}}_{r,\frac{n}{\beta}-r}},
\label{jamesidspecial}
\end{equation}
where variables in each ratio are independent, and throughout, $B_{a,b}$ denotes a $\mathrm{Beta}(a,b)$ random variable. Where furthermore, 
$T_{\beta,\theta}$ is a variable with density $f_{\beta,\theta}(t)=t^{-\theta}f_{\sigma_{\beta}(1)}(t)/\mathbb{E}[\sigma^{-\theta}_{\beta}(1)]$ and hence $T^{-\beta}_{\beta,\theta}\sim \mathrm{ML}(\beta,\theta)$ denotes a generalized Mittag-Leffler variable. Hence, the EPPF of a $\mathrm{PK}_{\beta}(h\cdot f_{\beta})$ partition of $[n]$ with $1\leq r\leq n$ blocks of size $(n_{1},\ldots,n_{r})$ can be expressed as 
\begin{equation}
 p_{\beta}(n_{1},\ldots,n_{r}) \Phi^{[\beta]}_{n,r}
\label{EPPFbetaGibbs} 
\end{equation}
where 
\begin{equation}
    \Phi^{[\beta]}_{n,r} = \mathbb{E}\left[h(\sigma_{\beta}(1)) \mid K_{n}^{[\beta]} = r\right]=\int_{0}^{\infty} h(v) f^{(n-r\beta)}_{\beta,r\beta}(v) \,dv=\mathbb{E}[h(Y^{(n-r\beta)}_{\beta,r \beta})].
\end{equation}

Set mass partitions $\mathbf{V}_{1}:=(V_{1,l})_{l\ge 1},$
$\mathbf{P}_{1}:=(P_{1,k})_{k\ge 1}$ and $\mathbf{Q}_{0}:=(Q_{l})_{l\ge 1}$ with corresponding bridges $(G_{1},F_{1},G_{0})$
Then the following points are established in~\citep{HoJamesLau2025}.
\begin{enumerate}
\item The fine mass partition $\mathbf{P}_{1}\overset{d}=\mathrm{Frag}_{\alpha,-\beta}(\mathbf{V}_{1}),$ where the iid collection of  $\mathrm{PD}(\alpha,-\beta)$ fragmenting masses are independent of $\mathbf{V}_{1}\sim \mathrm{PK}_{\beta}(h\cdot f_{\beta}),$ has a marginal distribution
\begin{equation}
\label{lawofPDfine}
\int_{0}^{\infty}\mathrm{PD}(\alpha|s )\left[ \int_{0}^{\infty}h(sy^{1/\alpha})f_{\frac{\beta}{\alpha}}(y)dy \right] f_{\alpha}(s)ds
\end{equation}
with an EPPF for a partition of $[n]$ into $K$ realized blocks, $C_{1},\ldots,C_{K}$, with sizes $(c_{1},\ldots,c_{K}),$ of
$$
p_{\alpha}(c_{1},\ldots,c_{K}) \sum_{r=1}^{K}\mathbb{P}^{(K)}_{\frac{\beta}{\alpha}}(r)\Phi^{[\beta]}_{n,r}.
$$
\item Where there are the identities using $K^{[\beta]}_{n}=K^{[\frac{\beta}{\alpha}]}_{K^{[\alpha]}_{n}},$
\begin{enumerate}
\item For $K^{[\frac{\beta}{\alpha}]}_{K}$ having pmf $\mathbb{P}^{(K)}_{\frac{\beta}{\alpha}}(r)$ for $r=1,\ldots,K,$ there is the distributional identity
$$
Y^{(n-K^{[
{\beta}/{\alpha}]}_{K}\beta)}
_{\beta,K^{[
{\beta}/{\alpha}]}_{K}\beta}\overset{d}=Y^{(n-K\alpha)}_{\alpha,K\alpha}\times T^{\frac{1}{\alpha}}_{\frac{\beta}{\alpha}},
$$
\item and  $\mathbb{E}[h(\sigma_{\beta}(1)) \mid K_{n}^{[\alpha]}=K]$ equal to
\begin{equation} \label{eq:gibbs_factor_mixing}
\sum_{j=1}^{K}\mathbb{P}^{(K)}_{\frac{\beta}{\alpha}}(j)\Phi^{[\beta]}_{n,j}=\int_{0}^{\infty}\int_{0}^{\infty}h(sy^{1/\alpha})f_{\frac{\beta}{\alpha}}(y)f^{(n-K\alpha)}_{\alpha,K\alpha}(s)dsdy
\end{equation}  
\end{enumerate} 
\item $(\mathbf{P}_{1},\mathbf{Q}_{0})$ are specified to have the joint law 
\begin{equation}
\mathrm{P}^{\frac{\beta}{\alpha}}_{\alpha}(h):=\int_{0}^{\infty}\int_{0}^{\infty}\mathrm{PD}(\alpha|s)\mathrm{PD}({\beta}/
{\alpha}|y)h\big(sy^{\frac{1}{\alpha}}\big)f_{\frac{\beta}{\alpha}}(y)f_{\alpha}(s)dyds.
\label{jointmeasure2}
\end{equation}
such that $G_{1}(y)\overset{d}=F_{1}(G_{0}(y))$ for $y\in[0,1]$
\item The distribution of the coagulator $\mathbf{Q}_{0}|\mathbf{P}_{1}$ or $\mathbf{Q}_{0}|C_{1},\ldots,C_{K}$ or its EPPF are not described. 
\end{enumerate}

\begin{rem}As noted in~\cite{HoJamesLau2025},it follows that with respect to the distribution in~\eqref{jointmeasure2}, when $h(t)=t^{-\theta}/\mathbb{E}\big[T^{-\theta}_{\beta}\big]$ for $\theta>-\beta,$ $\mathbf{P}_{1}\sim \mathrm{PD}(\alpha,\theta)$ is independent of $\mathbf{Q}_{0}\sim \mathrm{PD}(
{\beta}/{\alpha},
{\theta}/{\alpha}),$ which corresponds to the specifications in~\citep{Pit99Coag}. That is
$$
h(sy^{1/\alpha}) = \frac{s^{-\theta}}{\mathbb{E}\left[T^{-\theta}_\alpha\right]} \times \frac{y^{-\frac{\theta}{\alpha}}}{\mathbb{E}\left[T^{-\frac{\theta}{\alpha}}_{\beta/\alpha}\right]}
$$
\end{rem}

The following result describing formally a duality for a large $(\alpha,\beta)$-Gibbs class extending that of~\citep{Pit99Coag}, while not appearing explicitly in our work in~\citep{HoJamesLau2025} is now seen to follow from those results.

\begin{prop}\label{Gibbshduality}[cf.~\citep{HoJamesLau2025}]
Consider the mass partitions $(\mathbf{P}_{1}, \mathbf{V}_{1})$, where the coarse partition $\mathbf{V}_{1} \sim \mathrm{PK}_{\beta}(h \cdot f_{\beta})$ and the fine partition is generated as $\mathbf{P}_{1} \overset{d}{=} \mathrm{Frag}_{\alpha,-\beta}(\mathbf{V}_{1})$. Let the partition process evolve according to the dynamics laid out in \textbf{Setup~\ref{setup:canonical_duality}}. Then the following result holds with the representation $p_{\text{coag}}\times p_{\text{fine}}=p_{\text{frag}}\times p_{\text{coarse}},$ as indicated read from left to right The joint EPPF of the pair $(\pi_n^{(1)}, \pi_n^{(2)})$ satisfies the following identity, equating its dual coagulation and fragmentation forms:
\begin{equation}
\begin{aligned}
\label{Gibbsduality}
    &\frac{ p_{\frac{\beta}{\alpha}}(x_{1},\ldots,x_{r}) \Phi^{[\beta]}_{n,r} }{ \sum_{j=1}^{K}\mathbb{P}^{(K)}_{\frac{\beta}{\alpha}}(j)\Phi^{[\beta]}_{n,j} } \times p_{\alpha}(c_{1},\ldots,c_{K}) \sum_{j=1}^{K}\mathbb{P}^{(K)}_{\frac{\beta}{\alpha}}(j)\Phi^{[\beta]}_{n,j}\\
    &\qquad = \prod_{l=1}^{r} p_{\alpha,-\beta}(\mathbf{c}_{l}) \times p_{\beta}(n_{1},\ldots,n_{r}) \Phi^{[\beta]}_{n,r}.
\end{aligned}
\end{equation}
\end{prop}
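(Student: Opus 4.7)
The plan is to verify the identity by identifying the four EPPFs on either side with the four natural components of the coupled coagulation-fragmentation system, and then to reduce the resulting algebraic identity to the classical Pitman duality at $\theta=0$. From~\eqref{EPPFbetaGibbs}, the marginal EPPF of the coarse partition $\pi_n^{(2)}$ with block sizes $(n_1,\ldots,n_r)$ is $p_{\mathrm{coarse}}(n_1,\ldots,n_r)=p_\beta(n_1,\ldots,n_r)\Phi^{[\beta]}_{n,r}$, and from item (1) following~\eqref{EPPFbetaGibbs}, the marginal EPPF of the fine partition $\pi_n^{(1)}$ with block sizes $(c_1,\ldots,c_K)$ is $p_{\mathrm{fine}}(c_1,\ldots,c_K)=p_\alpha(c_1,\ldots,c_K)\sum_{j=1}^K\mathbb{P}^{(K)}_{\beta/\alpha}(j)\Phi^{[\beta]}_{n,j}$. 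By definition of $\mathrm{Frag}_{\alpha,-\beta}$ acting independently on each coarse block, the fragmentation EPPF is $p_{\mathrm{frag}}=\prod_{l=1}^r p_{\alpha,-\beta}(\mathbf{c}_l)$.

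Next, I would construct the joint EPPF of $(\pi_n^{(1)},\pi_n^{(2)})$ along the generative (fragmentation) path: draw $\pi_n^{(2)}$ from $p_{\mathrm{coarse}}$, then independently fragment each block according to $p_{\alpha,-\beta}$. This immediately yields the right-hand side of~\eqref{Gibbsduality}, establishing one half of the identity by construction. The coagulation factorization on the left-hand side then follows by Bayes' rule, $p_{\mathrm{coag}}=p_{\mathrm{joint}}/p_{\mathrm{fine}}$. Substituting the three EPPFs above and invoking the classical Pitman identity~\eqref{Pitmanduality} at $\theta=0$, namely $\prod_{l=1}^r p_{\alpha,-\beta}(\mathbf{c}_l)\cdot p_\beta(n_1,\ldots,n_r)=p_{\beta/\alpha}(x_1,\ldots,x_r)\cdot p_\alpha(c_1,\ldots,c_K)$, the factor $p_\alpha(c_1,\ldots,c_K)$ cancels against its counterpart inside $p_{\mathrm{fine}}$, and what remains is exactly
\begin{equation*}
p_{\mathrm{coag}}(x_1,\ldots,x_r)=\frac{p_{\beta/\alpha}(x_1,\ldots,x_r)\,\Phi^{[\beta]}_{n,r}}{\sum_{j=1}^K\mathbb{P}^{(K)}_{\beta/\alpha}(j)\Phi^{[\beta]}_{n,j}}.
\end{equation*}

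The substantive technical content---the identification of $p_{\mathrm{fine}}$ via the distributional identity $Y^{(n-K^{[\beta/\alpha]}_K\beta)}_{\beta,K^{[\beta/\alpha]}_K\beta}\overset{d}{=}Y^{(n-K\alpha)}_{\alpha,K\alpha}\cdot T^{1/\alpha}_{\beta/\alpha}$---is already provided in~\cite{HoJamesLau2025}. The only genuinely new step is the Bayes-rule rearrangement, for which the classical Pitman duality at $\theta=0$ supplies exactly the algebraic cancellation required. Thus there is no real analytic obstacle; the main conceptual point, rather than a computational hurdle, is to recognize that the $h$-mixing enters the coagulation EPPF only through an $r$-indexed tilt: the $\alpha$ substructure of the classical duality is preserved, while the effect of the mixing density $h$ is encoded entirely through $\Phi^{[\beta]}_{n,r}$, with the normalization in the denominator being exactly $\mathbb{E}[h(\sigma_\beta(1))\mid K^{[\alpha]}_n=K]$ by~\eqref{eq:gibbs_factor_mixing}.
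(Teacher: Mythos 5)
Your proposal is correct and follows essentially the same route as the paper: the right-hand side is obtained by construction from the independence of the $\mathrm{Frag}_{\alpha,-\beta}$ operator and the known $\mathrm{PK}_{\beta}(h\cdot f_{\beta})$ EPPF, and the coagulation EPPF is then identified by dividing the joint law by the fine marginal established in~\citep{HoJamesLau2025}. Your explicit invocation of the classical $\theta=0$ Pitman identity~\eqref{Pitmanduality} to carry out the cancellation is a useful elaboration of a step the paper leaves implicit, but it is not a different argument.
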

\begin{proof}
Due to the independence of the $\mathrm{Frag}_{\alpha,-\beta}$ operator from its input $\mathbf{V}_{1}$ and the known EPPF of $\mathbf{V}_{1}$ the right hand side of \eqref{Gibbsduality} is established. Since the marginal distribution of $\mathbf{P}_{1}$ is established in ~\citep{HoJamesLau2025} these identify the distribution of the coagulation operator.  
\end{proof}
\section{The Stable Case: Distributional Operations and Fragmentation Invariance}
\label{sec:tagged_fragmentation_example}

This section completes the circle begun in Section~\ref{sec:general_gibbs_duality}, which established stable-Gibbs duality through marginal change-of-measure arguments. Here, we derive the same results via the coupled Poissonian construction with $J=1$ and stable subordinators.
The agreement between these independent approaches provides crucial cross-validation. Section~\ref{sec:general_gibbs_duality} shows \emph{what} the duality laws must be by integrating over marginals with an externally chosen fragmentation operator. This section reveals \emph{why} those laws arise from the endogenous generative mechanism. This perspective uncovers structure invisible to marginal methods. Proposition~\ref{prop:fraginvariance} establishes invariance of the $\mathrm{PD}(\alpha,-\beta)$ fragmentation law: it remains unchanged whether viewed unconditionally or conditioned on fragment counts $\mathscr{F}^{(H_{\ell})}_{\ell}(\zeta^{1/\beta},1) = n_{\ell}$. This invariance, unexplained from the marginal perspective, emerges naturally from the Poissonized construction as an intrinsic property.
Beyond validation, this section provides a concrete example of the general machinery from Sections~\ref{sec:coupledmixedPoisson}--\ref{sec:section:PK} and leverages specialized distributional tools from~\cite{JamesStick}. Specifically, we employ general projective distributional maps (Proposition 1.1, Corollary 2.1, Lemma 2.1) applicable to arbitrary subordinators. These maps enable the vast array of distributions in Theorem~\ref{thm:master_duality} and Proposition~\ref{prop:fraginvariance}, establishing a principle extendable to any subordinator via $\mathbb{P}^{[n]}$ laws.

\subsection{A Tale of Two Views: The Coupled Process Construction}
We begin by constructing the coupled processes that form the heart of the framework, presenting them from two complementary perspectives.

\subsubsection{Fine-to-Coarse View}
Let $(s_{k},U_{k})_{k\ge 1}$ be the points of a Poisson random measure (PRM) with mean measure $\rho_{\alpha}(s)ds\,du$. Independently, let $(\lambda_{l},Y_{l})_{l\ge 1}$ be the points of a PRM with mean measure $\rho_{\frac{\beta}{\alpha}}(\lambda)d\lambda\,dy$, where $(\lambda_{l})$ are the ranked jumps of a stable subordinator $\sigma_{\frac{\beta}{\alpha}}$. Precisely,  for our construction, the corresponding unconditioned mass partitions are
$$
\mathbf{P}_{\alpha}:=\left(\frac{s_{k}}{\sigma_{\alpha}(\sigma_{\beta/\alpha}(1))}\right)_{k\ge 1}\sim \mathrm{PD}(\alpha,0), \quad 
\mathbf{Q}_{\frac{\beta}{\alpha}}:=\left(\frac{\lambda_{l}}{\sigma_{\frac{\beta}{\alpha}}(1)}\right)_{l\ge 1}\sim \mathrm{PD}(\frac{\beta}{\alpha},0).
$$
where here the  $(s_{k})$ are the ranked jumps of  stable subordinator $\sigma_{\alpha},$ run up to the random time $\sigma_{\beta/\alpha}(1).$ Note that the setup here is similar to \cite[Section 4.4.4]{BerFrag} where it follows from 
Lemma 4.11 of that work, see also~\cite{PY97}, that $\mathbf{P}_{\alpha}$ has the $\mathrm{PD}(\alpha,0)$ law under this scaling independent of ~$\sigma_{\beta/\alpha}(1).$ The composition $\sigma_\beta = \sigma_{\alpha} \circ \sigma_{\frac{\beta}{\alpha}}$ defines the coarse-grained process. The normalized jumps of this composed subordinator, $(z_l)_{l \ge 1}$, form the coarse-grained mass partition which is the ultimate target of this view:
$$
\mathbf{P}_{\beta}:=\left(\frac{z_{l}}{\sigma_{\beta}(1)}\right)_{l\ge 1}\sim \mathrm{PD}(\beta,0).
$$
We build the coupled processes $(I,Z)$ for $y \in [0,1]$ and a time/intensity parameter $\zeta > 0$:
\begin{equation}
\label{stablecoursetofine}
(I(\zeta^{1/\beta},y),Z(\zeta^{1/\beta},y)):= \left( \sum_{k=1}^{\infty}\mathscr{P}_{1,k}(s_{k}\zeta^{1/\beta})\mathbb{I}_{\{U_{k}\leq y\}}, \sum_{k=1}^{\infty}\mathscr{P}_{1,k}(s_{k}\zeta^{1/\beta})\mathbb{I}_{\{w_{k}\leq y\}} \right),
\end{equation}
where $(\mathscr{P}_{1,k})$ are independent standard Poisson processes and $w_{k}=G^{-1}_{\frac{\beta}{\alpha},0}(U_{k})$ maps the uniform labels of the fine partition to the labels of the coarse partition.

\subsubsection{Coarse-to-Fine View}
We now view the same system through the lens of composition. The composed $\beta$-stable subordinator $\sigma_\beta=\sigma_{\alpha}\circ\sigma_{\frac{\beta}{\alpha}}$ and its jumps $(z_l)_{l \ge 1}$ are the starting point. Each coarse jump $z_l$ can be represented as $z_l \overset{d}{=} \sigma_{\alpha}(\lambda_l)$, where $(\lambda_l)$ are the jumps of the directing subordinator $\sigma_{\frac{\beta}{\alpha}}$. More formally, for each $l$, the jump $z_l$ is the sum of points $(s_{k,l})_{k \ge 1}$ from a Poisson process with L\'evy density $\lambda_{l}\rho_{\alpha}(s)ds$. This gives the coarse-to-fine construction:
\begin{equation}
\label{stableCoarsetoFine}
(I(\zeta^{1/\beta},y),Z(\zeta^{1/\beta},y)):= \left( \sum_{l=1}^{\infty}\sum_{k=1}^{\infty}\mathscr{P}_{1,k,l}(s_{k,l}\zeta^{1/\beta})\mathbb{I}_{\{U_{k,l}\leq y\}}, \sum_{l=1}^{\infty}\left[\sum_{k=1}^{\infty}\mathscr{P}_{1,k,l}(s_{k,l}\zeta^{1/\beta})\right]\mathbb{I}_{\{Y_{l}\leq y\}} \right).
\end{equation}
From this view, we can construct the fragmenting mass partitions for each coarse block $l$, whose law is the central object of interest:
$$
\left(Q_{k,l}=\frac{s_{k,l}}{\sigma_{\alpha,l}(\lambda_{l})}\right)_{k\ge 1} \quad \text{with corresponding bridge} \quad F_{1,l}(y)=\sum_{k=1}^{\infty}Q_{k,l}\mathbb{I}_{\{U_{k,l}\leq y\}}.
$$

\begin{rem}[The Law of the Fragments]\label{rem:stablefFragdist}
In this stable setting, one can formally show that the fragmenting bridges $(F_{1,l})_{l\ge 1}$ are i.i.d.\ with law $\mathrm{PD}(\alpha,-\beta)$. This follows from analyzing the joint law of a composed jump $\sigma_{\alpha}(\lambda)$ and the underlying jump $\lambda$, which is $\mathbb{P}(\sigma_{\alpha}(\lambda)\in dz)\rho_{\frac{\beta}{\alpha}}(\lambda)d\lambda$. A change of variable $t=z\lambda^{-1/\alpha}$ reveals the conditional law of the scaled jump to be governed by the $\beta$-tilted density $f_{\alpha,-\beta}(t)=t^{\beta}f_{\alpha}(t)/\E[\sigma^{\beta}_{\alpha}(1)]$. This argument, a variant of~\cite[Proposition 33, (iii)]{PY97}, confirms that the operator is indeed $\mathrm{Frag}_{\alpha,-\beta}$.
\end{rem}

\subsection{The Four-Component Process in the Stable Setting}
We begin by constructing the four coupled processes that form the heart of the PHIBP framework, specializing the general construction from Section~\ref{sec:coupledmixedPoisson} to the stable case for a single group ($J=1$). The building blocks are two independent stable subordinators:
\begin{itemize}
    \item The coagulating subordinator, $\sigma_{\beta/\alpha}$, is a stable subordinator of index $\beta/\alpha$, with $0 < \beta < \alpha < 1$. Its Laplace exponent is $\Psi_0(\gamma) = \gamma^{\beta/\alpha}:=\psi_{\frac{\beta}{\alpha}}(\gamma)$, and its L\'evy density is $\rho_{\beta/\alpha}(\lambda)$.
    \item The fine-grained subordinator, $\sigma_{\alpha}$, is a stable subordinator of index $\alpha$. Its Laplace exponent is $\psi_1(\gamma) = \gamma^\alpha:=\psi_{\alpha}(\gamma)$, and its L\'evy density is $\rho_{\alpha}(s)$.
\end{itemize}
The composition $\sigma_{\alpha} \circ \sigma_{\beta/\alpha}$ is a stable subordinator of index $\alpha \cdot (\beta/\alpha) = \beta$, which we denote $\sigma_\beta$.

Following Theorem~\ref{coupledsupertheorem}, we define the four-component process for a time/intensity parameter $\zeta^{1/\beta} > 0$ and a label-space parameter $y \in [0,1]$.

\begin{equation} \label{eq:stable_4_tier_representation}
\begin{pmatrix}
I(\zeta^{1/\beta}, y) \\[2.5ex]
\mathscr{A}(\zeta^{1/\beta}, y) \\[2.5ex]
\left(\mathscr{F}^{(H_{\ell})}_{\ell}(\zeta^{1/\beta}, y)\right)_{\ell\ge 1} \vphantom{\begin{cases} \sum_{k=1}^{X_{\ell}} C_{k,\ell} \mathbb{I}_{\{\tilde{U}_{k,\ell}\leq y\}} \\ 0 \end{cases}} \\[2.5ex]
Z(\zeta^{1/\beta}, y)
\end{pmatrix}
:=
\begin{pmatrix}
\sum_{k=1}^{\mathscr{A}(\zeta^{1/\beta},1)}C_{k}\mathbb{I}_{\{\tilde{U}_{k}\leq y\}} \\[2.5ex]
\sum_{\ell=1}^{\varphi}X_{\ell}\mathbb{I}_{\{\tilde{Y}_{\ell}\leq y\}} \\[2.5ex]
\begin{cases}
    \sum_{k=1}^{X_{\ell}} C_{k,\ell} \mathbb{I}_{\{\tilde{U}_{k,\ell}\leq y\}} & \text{for } \ell \in \{1, \dots, \varphi\} \\
    0 & \text{for } \ell > \varphi
\end{cases}\\[2.5ex]
\sum_{\ell=1}^{\varphi}\left( \sum_{k=1}^{X_{\ell}} C_{k,\ell} \right) \mathbb{I}_{\{\tilde{Y}_{\ell}\leq y\}}
\end{pmatrix}
\end{equation}
The distributions of the random variables are direct specializations of the general framework, driven by the latent variable $\zeta$:
\begin{itemize}
    \item The number of coarse blocks, $\varphi$, follows a 
    $$
    \mathrm{Poisson}(\psi_{\frac{\beta}{\alpha}}(\psi_{\alpha}(\zeta^{1/\beta}))) = \mathrm{Poisson}(\psi_{\beta}(\zeta^{1/\beta})) = \mathrm{Poisson}(\zeta)
    $$
    distribution.
    
    \item The counts of fine blocks within each coarse block, $(X_\ell)_{\ell \ge 1}$, are i.i.d. from $\mathrm{MtP}(\rho_{\beta/\alpha}, \psi_{\alpha}(\zeta^{1/\beta})) = \mathrm{MtP}(\rho_{\beta/\alpha}, \zeta^{\alpha/\beta})$. With
    $$
    \psi_{\frac{\beta}{\alpha}}^{(x_{\ell})}(\zeta^{\alpha/\beta})=\int_{0}^{\infty}s^{x_{\ell}}e^{-\lambda\zeta^{\alpha/\beta}}\rho_{\frac{\beta}{\alpha}}(\lambda)d\lambda=\frac{\beta\Gamma(x_{\ell}-\frac{\beta}{\alpha})\zeta^{1-\frac{\alpha}{\beta}x_{\ell}}}{\alpha\Gamma(1-\frac{\beta}{\alpha})}
    $$
    and
    $$
    \mathbb{P}(X_{\ell}=x_{\ell})=\frac{\beta\Gamma(x_{\ell}-\frac{\beta}{\alpha})}{\alpha\Gamma(1-\frac{\beta}{\alpha})x_{\ell}!}
    $$
    \item The counts of individuals within each fine block, $(C_k)$ and equivalently $(C_{k,\ell})$, are i.i.d. from $\mathrm{MtP}(\rho_\alpha, \zeta^{1/\beta})$, With
    $$
    \psi_{{\alpha}}^{(c_{k,\ell})}(\zeta^{1/\beta})=\int_{0}^{\infty}s^{c_{k,\ell}}e^{-\lambda\zeta^{1/\beta}}\rho_{{\alpha}}(s)ds=\frac{\alpha\Gamma(c_{k,\ell}-{\alpha})\zeta^{\frac{\alpha}{\beta}-\frac{1}{\beta}c_{k,\ell}}}{\Gamma(1-{\alpha})}
    $$
    and
    $$
    \mathbb{P}(C_{k,\ell}=c_{k,\ell})=\frac{\alpha\Gamma(c_{k,\ell}-{\alpha})}{\Gamma(1-{\alpha})c_{k,\ell}!}
    $$
\end{itemize}
This construction places the stable duality within our general framework. The processes $I$ and $Z$ correspond to the fine and coarse count processes, $\mathscr{A}$ is the allocation process defining the fine partition, and $\mathscr{F}$ defines the fragmentation operator.

\begin{prop} The distribution of the fragmentation counts $(\mathscr{F}^{(H_{\ell})}_{\ell}(\zeta^{1/\beta},1),\ell\in[r])$ are iid $\mathrm{MtP}(\rho_{\beta},\zeta^{1/\beta})$
where 
$$
   {(\Psi_{0}\circ\psi_{1})}^{(n_{\ell})}(\zeta^{1/\beta})= \psi_{\beta}^{(n_{\ell})}(\zeta^{1/\beta})=\int_{0}^{\infty}z^{n_{\ell}}e^{-z\zeta^{1/\beta}}\rho_{{\beta}}(z)dz=\frac{\beta\Gamma(n_{\ell}-{\beta})\zeta^{1-\frac{1}{\beta}n_{\ell}}}{\Gamma(1-{\beta})}
    $$
    and
    $$
    \mathbb{P}(\mathscr{F}^{(H_{\ell})}_{\ell}(\zeta^{1/\beta},1)=n_{\ell})=\frac{\beta\Gamma(n_{\ell}-{\beta})}{\Gamma(1-{\beta})n_{\ell}!}
    $$
 
    \begin{enumerate}
    \item The distribution of $\zeta^{\alpha/\beta}H_{\ell}$ given $X_{\ell}=x_{\ell}$ and $\mathscr{F}^{(H_{\ell})}_{\ell}(\zeta^{1/\beta},1)=n_{\ell}$ is $\mathrm{Gamma}(x_{\ell}-\frac{\beta}{\alpha},1)$ 
    \item The distribution of $X_{\ell}$ given $\mathscr{F}^{(H_{\ell})}_{\ell}(\zeta^{1/\beta},1)=n_{\ell}$ equates to the number of distinct blocks in a $\mathrm{PD}(\alpha,-\beta)$ partition of $[n_{\ell}]$ denote this random variable as $K^{[\alpha,-\beta]}_{n_{\ell}}$ with probability mass function for $k=1,\ldots,n_{\ell},$
\begin{equation}
\label{alphabetafragblockdist}
     \mathbb{P}^{(n_{\ell})}_{\alpha,-\beta}(k)=\frac{\Gamma(n_{\ell})\Gamma(k-\frac{\beta}{\alpha})\Gamma(1-\beta)}{\Gamma(k)\Gamma(1-\frac{\beta}{\alpha})
     \Gamma(n_{\ell}-\beta)}\mathbb{P}^{(n_{\ell})}_{\alpha}(k)
    \end{equation}
    \item It follows that $\zeta^{\alpha/\beta}H_{\ell}$ given $\mathscr{F}^{(H_{\ell})}_{\ell}(\zeta^{1/\beta},1)=n_{\ell}$ is equal in distribution to 
    $$
    G_{K^{[\alpha,-\beta]}_{n_{\ell}}-\frac{\beta}{\alpha}}\overset{d}={G^{\alpha}_{n_{\ell}-\beta}}\times {T^{-\alpha}_{\alpha,-\beta}}
    $$
    where $T^{-\alpha}_{\alpha,-\beta}\sim \mathrm{ML}(\alpha,-\beta)$ independent of $G_{n_{\ell}-\beta}\sim \mathrm{Gamma}(n_{\ell}-\beta,1)$ More details of this distributional identity can be found in \cite[Corollary 2.1 and section 4.4]{JamesStick}.
    \item That is its density can be expressed as
\begin{equation}
\label{scaleHdenstiystablecase}
f_{\zeta^{\alpha/\beta}H_{\ell}}(w|n_{\ell},\zeta^{1/\beta})=\sum_{x_{\ell}=1}^{n_{\ell}}   \mathbb{P}^{(n_{\ell})}_{\alpha,-\beta}(x_{\ell})\frac{w^{x_{\ell}-\frac{\beta}{\alpha}-1}e^{-w}}{\Gamma(x_{\ell}-\frac{\beta}{\alpha})}
\end{equation}
\end{enumerate}
\end{prop}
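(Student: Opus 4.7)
The approach hinges on the fact that in the stable specialization with $J=1$, subordinating an $\alpha$-stable by an independent $(\beta/\alpha)$-stable subordinator again yields a $\beta$-stable subordinator, so $\Psi_{0}\circ\psi_{1}=\psi_{\beta}$ and in particular $(\Psi_{0}\circ\psi_{1})^{(n_{\ell})}(\gamma_{1})=\psi_{\beta}^{(n_{\ell})}(\gamma_{1})$. This single observation collapses the marginal pmf in \eqref{Ngroupldistforfrag} (taken at $J=1$) to the $\mathrm{MtP}(\rho_{\beta},\zeta^{1/\beta})$ pmf, which is the first assertion. Substituting the explicit stable form $\psi_{\beta}^{(n_{\ell})}(\zeta^{1/\beta})=\beta\Gamma(n_{\ell}-\beta)\zeta^{1-n_{\ell}/\beta}/\Gamma(1-\beta)$ together with $\psi_{\beta}(\zeta^{1/\beta})=\zeta$ makes the $\zeta$-powers cancel and delivers the claimed closed form for $\mathbb{P}(\mathscr{F}^{(H_{\ell})}_{\ell}(\zeta^{1/\beta},1)=n_{\ell})$.

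For item (1), I would invoke Proposition~\ref{HgivenXN}(1), which reduces the claim to identifying the marginal density of $H_{\ell}$ given $\tilde{X}_{\ell}=x_{\ell}$ read off from \eqref{HXdecomp}. With $\tau_{0}=\rho_{\beta/\alpha}$ and $\psi_{\alpha}(\zeta^{1/\beta})=\zeta^{\alpha/\beta}$ this density is proportional to $\lambda^{x_{\ell}-\beta/\alpha-1}e^{-\lambda\zeta^{\alpha/\beta}}$, i.e.\ a $\mathrm{Gamma}(x_{\ell}-\beta/\alpha,\zeta^{\alpha/\beta})$ density; the change of variable $w=\zeta^{\alpha/\beta}\lambda$ then produces the unit-scale $\mathrm{Gamma}(x_{\ell}-\beta/\alpha,1)$ law asserted.

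For item (2), I plan to apply \eqref{XgivenF} and evaluate each factor in closed form. The probability $\mathbb{P}(X_{\ell}=x_{\ell})$ is given. The non-trivial factor is $\mathbb{P}(\sum_{k=1}^{x_{\ell}}C_{k,\ell}=n_{\ell})$ for $C\sim\mathrm{MtP}(\rho_{\alpha},\zeta^{1/\beta})$: summing the joint pmf of $(C_{1},\ldots,C_{x_{\ell}})$ over ordered compositions of $n_{\ell}$ into $x_{\ell}$ parts produces $\Xi^{[n_{\ell}]}_{x_{\ell}}(\rho_{\alpha},\zeta^{1/\beta})$, which via the $\mathrm{PD}(\alpha,0)$ EPPF identity equals $(\alpha\Gamma(n_{\ell})/\Gamma(x_{\ell}))\zeta^{(x_{\ell}\alpha-n_{\ell})/\beta}\mathbb{P}^{(n_{\ell})}_{\alpha}(x_{\ell})$. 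Dividing by the explicit denominator $\psi_{\beta}^{(n_{\ell})}(\zeta^{1/\beta})$ from the first step makes all $\zeta$-powers cancel, and the residual gamma ratios reassemble exactly into~\eqref{alphabetafragblockdist}.

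For items (3) and (4), the distributional identity $G_{K^{[\alpha,-\beta]}_{n_{\ell}}-\beta/\alpha}\overset{d}{=}G^{\alpha}_{n_{\ell}-\beta}\times T^{-\alpha}_{\alpha,-\beta}$ is imported from \cite[Corollary 2.1, Section 4.4]{JamesStick}, and the mixture density \eqref{scaleHdenstiystablecase} is then an immediate consequence of mixing the conditional $\mathrm{Gamma}(x_{\ell}-\beta/\alpha,1)$ law from (1) against the pmf \eqref{alphabetafragblockdist} from (2). The main obstacle is really one of bookkeeping rather than conceptual depth: the nontrivial step is computing $\mathbb{P}(\sum C_{k,\ell}=n_{\ell})$ via the composition-sum identity for $\Xi^{[n_{\ell}]}_{x_{\ell}}(\rho_{\alpha},\cdot)$ and then carefully tracking powers of $\zeta$ and stable-beta-gamma identities so that the clean cancellations identifying each named distribution become visible.
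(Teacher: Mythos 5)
Your overall route is the paper's own: the headline claim via the composition identity $\Psi_{0}\circ\psi_{\alpha}=\psi_{\beta}$ applied to \eqref{Ngroupldistforfrag} at $J=1$, item (1) by reading the conditional law of $H_{\ell}$ off \eqref{HXdecomp} through Proposition~\ref{HgivenXN}(1) and rescaling, item (3) imported from \cite[Corollary 2.1, Section 4.4]{JamesStick}, and item (4) by mixing (1) against (2). The paper itself gives essentially nothing more than this, except that it records the key fact for item (2) as the generalized Stirling identity \eqref{genStirling}, i.e.\ $\mathbb{P}\big(\sum_{k=1}^{x_{\ell}}C_{k,\ell}=n_{\ell}\big)=\alpha^{x_{\ell}}x_{\ell}!\,S_{\alpha}(n_{\ell},x_{\ell})/n_{\ell}!$.

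The one step that would not go through as you wrote it is the bookkeeping in item (2). Summing the joint $\mathrm{MtP}(\rho_{\alpha},\zeta^{1/\beta})$ pmf over ordered compositions gives $\mathbb{P}\big(\sum_{k=1}^{x_{\ell}}C_{k,\ell}=n_{\ell}\big)=\frac{\gamma_{1}^{n_{\ell}}x_{\ell}!}{[\psi_{\alpha}(\gamma_{1})]^{x_{\ell}}n_{\ell}!}\,\Xi^{[n_{\ell}]}_{x_{\ell}}(\rho_{\alpha},\gamma_{1})$ with $\gamma_{1}=\zeta^{1/\beta}$, not $\Xi^{[n_{\ell}]}_{x_{\ell}}$ itself; in the stable case the prefactor $\gamma_{1}^{n_{\ell}}/[\psi_{\alpha}(\gamma_{1})]^{x_{\ell}}=\zeta^{(n_{\ell}-\alpha x_{\ell})/\beta}$ exactly kills the $\zeta$-power you extracted from $\Xi^{[n_{\ell}]}_{x_{\ell}}$, so this probability is $\zeta$-free (which is precisely \eqref{genStirling}). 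Hence the numerator $n_{\ell}!\,\mathbb{P}(X_{\ell}=x_{\ell})\mathbb{P}(\sum_{k}C_{k,\ell}=n_{\ell})$ is $\zeta$-free, and dividing it by $\psi_{\beta}^{(n_{\ell})}(\zeta^{1/\beta})\propto\zeta^{1-n_{\ell}/\beta}$ does \emph{not} make the $\zeta$-powers cancel: it leaves a spurious $\zeta^{n_{\ell}/\beta-1}$ and the result does not sum to one over $x_{\ell}$ (under your literal identification with $\Xi^{[n_{\ell}]}_{x_{\ell}}$ the leftover would be $\zeta^{\alpha x_{\ell}/\beta-1}$, again nonvanishing). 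The repair is routine: normalize instead by $n_{\ell}!\,\mathbb{P}\big(\mathscr{F}^{(H_{\ell})}_{\ell}(\zeta^{1/\beta},1)=n_{\ell}\big)=\beta\Gamma(n_{\ell}-\beta)/\Gamma(1-\beta)$, or equivalently write the joint weight as $\Psi_{0}^{(x_{\ell})}(\zeta^{\alpha/\beta})\,\Xi^{[n_{\ell}]}_{x_{\ell}}(\rho_{\alpha},\zeta^{1/\beta})$ and divide by $(\Psi_{0}\circ\psi_{\alpha})^{(n_{\ell})}(\zeta^{1/\beta})=\sum_{x}\Psi_{0}^{(x)}(\zeta^{\alpha/\beta})\Xi^{[n_{\ell}]}_{x}(\rho_{\alpha},\zeta^{1/\beta})$ as in \eqref{eq:composed_derivative_reps}; then the exponents $(1-\alpha x_{\ell}/\beta)+(\alpha x_{\ell}-n_{\ell})/\beta-(1-n_{\ell}/\beta)$ do vanish and the gamma ratios reassemble into \eqref{alphabetafragblockdist}. (In fairness, \eqref{XgivenF} as printed is itself only correct up to the factor $\Psi_{0}(\psi_{\alpha}(\gamma_{1}))/\gamma_{1}^{n_{\ell}}$, so part of this slip is inherited from the source.) With item (2) adjusted in this way, your argument coincides with the paper's.
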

\begin{proof}The distribution of the $(\mathscr{F}^{(H_{\ell})}_{\ell}(\zeta^{1/\beta},1),\ell\in[r])$  follows as a special case of  Lemma~\ref{post:marginalofN} utilizing the fact that $\sigma_{\alpha}\circ\sigma_{\frac{\beta}{\alpha}}=\sigma_{\beta}.$ Item 1. can be read from 1. of Proposition~\ref{HgivenXN}. Item 2.follows from part 2. of that result where the explicit distribution of $\mathscr{F}^{(H_{\ell})}_{\ell}(\zeta^{1/\beta},1)$ makes this more transparent. Items 3. and 4. follows from 1. and  2. 
\end{proof}

\begin{rem}For the result above we used the known fact that, for the stable case, the sum's distribution is given by generalized Stirling numbers, $S_{\alpha}(n,\ell)$:
\begin{equation}
\label{genStirling}
\mathbb{P}\left(\sum_{k=1}^{x_{\ell}}C_{k,\ell}=n_{\ell}\right)=\frac{\alpha^{x_{\ell}}x_{\ell}!}{n_{\ell}!}S_{\alpha}(n_{\ell},x_{\ell}).
\end{equation}
See \cite[p. 66, 72]{Pit06} and also~\cite{JamesStick,Pit97}.
\end{rem}

We can now describe the law of the fragmenting mass partition, which has the remarkable invariance property that its uncondontional and conditional laws are the same. This will also give us an opportunity to provide some clarifying calculations.

\begin{prop}\label{prop:fraginvariance}
Recall the definition from \eqref{StablePowerdist} (setting $n=n_{\ell}$):
\begin{equation}
\label{StablePowerdist2}
\mathbb{P}^{[n_{\ell}]}_{\alpha}(\lambda^{1/\alpha}\gamma) = \int_{0}^{\infty} \mathrm{PD}(\alpha|t) \dfrac{ (\lambda^{1/\alpha}\gamma)^{n_{\ell}-\alpha} t^{n_{\ell}} e^{-\lambda^{1/\alpha}\gamma t} e^{\lambda\gamma^{\alpha}} f_{\alpha}(t) }{\alpha\sum_{j=1}^{n_{\ell}}\mathbb{P}_{\alpha}^{(n_{\ell})}(j)\frac{(\lambda\gamma^{\alpha})^{j-1}\Gamma(n_{\ell})}{\Gamma(j)} } dt.
\end{equation}
\begin{enumerate}
    \item The distribution of $(Q_{k,\ell})_{k \geq 1} \sim \mathrm{PD}(\alpha, -\beta)$.
    
    \item The distribution of $(Q_{k,\ell})_{k \geq 1} \mid \mathscr{F}^{(H_{\ell})}_{\ell}(\zeta^{1/\beta}, 1) = n_{\ell}$ is:
    \begin{equation}
    \label{PDidFrag}
    \mathbb{E}\left[\mathbb{P}^{[n_{\ell}]}_{\alpha}\Big(G^{1/\alpha}_{K^{[\alpha, -\beta]}_{n_{\ell}} - \frac{\beta}{\alpha}}\Big)\right] = \mathrm{PD}(\alpha, -\beta).
    \end{equation}

    \item The distribution of $(Q_{k,\ell})_{k\ge 1}$ given $K^{[\alpha,-\beta]}_{n_{\ell}}=k$ (or equivalently given $X_{\ell}=k$) and $\mathscr{F}^{(H_{\ell})}_{\ell}(\zeta^{1/\beta}, 1) = n_{\ell}$ is
    \[
    \mathbb{E}\left[\mathbb{P}^{[n_{\ell}]}_{\alpha}\Big(G^{1/\alpha}_{k - \frac{\beta}{\alpha}}\Big)\right].
    \]
\end{enumerate}
\end{prop}

\begin{proof}
Statement (i) has already been verified. Statement (ii) follows from \cite[Corollary 2.1 and section 4.4]{JamesStick}. However, since these results are not well known and will be invoked again, we provide details for clarity. Using the density in \eqref{scaleHdenstiystablecase}, we may focus on terms in \eqref{StablePowerdist2} involving directly its argument:
$$
 \frac{ (w^{1/\alpha})^{n_{\ell}-\alpha} t^{n_{\ell}} e^{-w^{1/\alpha} t} e^{w} f_{\alpha}(t) }{\alpha\sum_{j=1}^{n_{\ell}}\mathbb{P}_{\alpha}^{(n_{\ell})}(j)\frac{w^{j-1}\Gamma(n_{\ell})}{\Gamma(j)} }\times f_{\zeta^{\alpha/\beta}H_{\ell}}(w|n_{\ell},\zeta^{1/\beta}).
$$
Given the definition in~\eqref{alphabetafragblockdist}, after cancellation and integration with respect to $w$, the expression simplifies to: 
$$
\frac{\Gamma(1-\beta)}{\Gamma(1-\frac{\beta}{\alpha})
     \Gamma(n_{\ell}-\beta)}\int_{0}^{\infty}y^{n_{\ell}-\beta-1}e^{-yt}dy \times t^{n_{\ell}}f_{\alpha}(t)
$$
which equates to the density of $T_{\alpha,-\beta}$, completing the proof.
\end{proof}

\begin{rem}
The proof above demonstrates a particular case of the projective distributional maps developed in full generality in \cite[Proposition 1.1]{JamesStick}. These maps apply to any power-biased variable and can therefore be used for all subordinators in this work, not only for the stable case. What we showed above was a projection from a power $n$ index to a power $\beta$ index. More generally, the maps hold for any pair of indices $\vartheta_2 > \vartheta_1$, provided the relevant variables exist, but offer particular advantages in terms of explicit representations when working with an integer index $n > \vartheta_2$. Application to the general $J$-group case is also possible, although we do not explore that possibility here.  See \cite{JamesStick} for more details.
\end{rem}
\subsection{Poissonization and the Bridge to Finite Samples}
The total number of individuals observed in the system, $I(\zeta^{1/\beta},1)$, is a mixed Poisson variable. Following~\cite{PitmanPoissonMix}, we connect this to the arrival time of the $n$-th individual, $T_{n}=\Gamma_{n}/\sigma_{\beta}(1)$, via the fundamental relationship:
\begin{equation}
\{I(\zeta^{1/\beta},1) \ge n\} \iff \{T_{n} \le \zeta^{1/\beta}\}.
\end{equation}
This leads to the switching identity, $\mathbb{P}(T_{n}\in d\gamma)/d\gamma = (n/\gamma)\mathbb{P}(I(\gamma,1)=n)$, which guarantees $\mathbb{P}(T_n < \infty)=1$. The stable case provides an explicit form for the law of this arrival time, which is the final component needed for our main theorem.
\begin{lem}[The Arrival Time Distribution]\label{lem:stablearrival}
As established in~\cite[Section 4.4]{JamesStick}, the scaled arrival time has the representation $T_{n} \overset{d}{=} G^{1/\beta}_{K^{[\beta]}_{n}}$, where $K^{[\beta]}_{n}$ is the number of blocks in a $\mathrm{PD}(\beta,0)$ partition of $[n]$. It follows that $\mathbb{P}(T_n \leq \zeta^{1/\beta}) = \mathbb{P}(G_{K^{[\beta]}_{n}} \leq \zeta)$, where $G_k \sim \mathrm{Gamma}(k,1)$.
\begin{enumerate}
    \item The density of $G_{K_n^{[\beta]}}$ is given by $f_{G_{K_n^{[\beta]}}}(\zeta) = e^{-\zeta} \sum_{k=1}^{n} \mathbb{P}_{\beta}^{(n)}(k) \frac{\zeta^{k-1}}{\Gamma(k)}$.
    \item The probability of observing exactly $n$ individuals in the coarse partition is
    \begin{equation}\label{stableNpoisson}
    \mathbb{P}(\mathscr{P}(\sigma_{\beta}(\zeta))=n)=\frac{\Gamma(n)}{n!}\beta e^{-\zeta}\sum_{k=1}^{n}\mathbb{P}^{(n)}_{\beta}(k)\frac{\zeta^{k}}{\Gamma(k)}.
    \end{equation}
    \item The probability of observing exactly $K$ blocks in the fine partition is
    \begin{equation}\label{stableKpoisson}
    \mathbb{P}(\mathscr{A}(\zeta^{1/\beta},1)=K)=\mathbb{P}(\mathscr{P}(\sigma_{\frac{\beta}{\alpha}}(\zeta))=K)=\frac{\beta\Gamma(K)}{\alpha K!} e^{-\zeta}\sum_{j=1}^{K}\mathbb{P}^{(K)}_{\frac{\beta}{\alpha}}(j)\frac{\zeta^{j}}{\Gamma(j)}.
    \end{equation}
\end{enumerate}
\end{lem}

\subsection{The Master Duality Equation}
We now assemble these components. The PHIBP framework specifies the joint law of the combinatorial structures conditional on the latent variable $\zeta$. This leads to our central result, which reveals the Poissonized joint EPPF of the system, conditioned on the event that the latent time variable is $G_{K^{[\beta]}_{n}}=\zeta$.

\begin{thm}[The Generative Master Equation of Duality]
\label{thm:master_duality}
Consider the PHIBP processes in the stable setting. The joint law of the combinatorial components, conditioned on the event that the latent time variable is $G_{K^{[\beta]}_{n}}=\zeta$, satisfies the following master identity:
\begin{equation}
\label{PHIBPGibbsduality}
\begin{aligned}
    &\underbrace{\frac{ p_{\frac{\beta}{\alpha}}(x_{1},\ldots,x_{r}) \frac{\zeta^{r}}{\Gamma(r)} }{ \sum_{j=1}^{K}\mathbb{P}^{(K)}_{\frac{\beta}{\alpha}}(j)\frac{\zeta^{j}}{\Gamma(j)}}}_{\text{Coagulator EPPF, } p(\pi^{(2)}|\pi^{(1)},\zeta)} \times 
    \underbrace{\frac{p_{\alpha}(c_{1},\ldots,c_{K}) \sum_{j=1}^{K}\mathbb{P}^{(K)}_{\frac{\beta}{\alpha}}(j)\frac{\zeta^{j}}{\Gamma(j)}}{\sum_{k=1}^{n}\mathbb{P}^{(n)}_{\beta}(k)\frac{\zeta^{k}}{\Gamma(k)}}}_{\text{Fine Partition EPPF, } p(\pi^{(1)}|n,\zeta)} \times f_{G_{K_n^{[\beta]}}}(\zeta)\\
    &\qquad = \underbrace{\prod_{l=1}^{r} p_{\alpha,-\beta}(\mathbf{c}_{l})}_{\text{Fragmentation EPPF, } p(\pi^{(1)}|\pi^{(2)})} \times \underbrace{\frac{p_{\beta}(n_{1},\ldots,n_{r}) \frac{\zeta^{r}}{\Gamma(r)}}
    {\sum_{k=1}^{n}\mathbb{P}^{(n)}_{\beta}(k)\frac{\zeta^{k}}{\Gamma(k)}}}_{\text{Coarse Partition EPPF, } p(\pi^{(2)}|n,\zeta)}
    \times f_{G_{K_n^{[\beta]}}}(\zeta).
\end{aligned}
\end{equation}
This equation, expressed in terms of the EPPFs, shows that the fine-to-coarse (LHS) and coarse-to-fine (RHS) perspectives are balanced by the latent time variable $\zeta$. The fragmenter term is independent of $\zeta$, a special feature of the stable case. This master equation has the following consequences:

\begin{description}
    \item[1. Foundational Duality:] To recover the classic duality for $\mathrm{PD}(\alpha,0)$ and $\mathrm{PD}(\beta,0)$, one integrates both sides of the master equation with respect to the mixing density $f_{G_{K_n^{[\beta]}}}(\zeta)d\zeta$ over $(0, \infty)$. This operation marginalizes out the latent time, collapsing the dynamic, Poissonized structure into the static identity.

    \item[2. Pitman-Yor Family :] To recover the duality involving a $\mathrm{PD}(\beta,\theta)$ process for $\theta>-\beta$, the mixing density $f_{G_{K_n^{[\beta]}}}(\zeta)$ is replaced on both sides with the density of the latent time variable corresponding to the $\mathrm{PD}(\beta,\theta)$ process. As detailed in~\cite[Section 4.4]{JamesStick}, this variable has the law of $G_{\frac{\theta}{\beta}+K_{n}^{(\beta,\theta)}}\overset{d}=G^{\beta}_{\theta+n}\times T^{-\beta}_{\beta,\theta}$, where $K_{n}^{(\beta,\theta)}$ is the number of distinct blocks in a $\mathrm{PD}(\beta,\theta)$ partition of $[n]$. 

    \item[3. Landscape of New Dualities:] The principle in (2.) can be generalized. The framework of~\cite[Corollary 2.1, Lemma 2.1]{JamesStick} provides a calculus of conditioned processes, $\mathbb{P}^{[n]}$, each with its own characteristic latent time distribution. By substituting the appropriate density into the master equation, one can generate a new duality relation for any of these processes. This transforms the master equation into a general tool for producing such results.
\end{description}
\end{thm}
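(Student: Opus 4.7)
The plan is to treat \eqref{PHIBPGibbsduality} as the stable-case specialization of the universal Poissonized duality in Theorem~\ref{thm:unified_duality_identity}, transported from the $n$-th arrival time $T_n$ to the auxiliary parameter $\zeta$ via the identification $T_n \overset{d}{=} G_{K_n^{[\beta]}}^{1/\beta}$ established in Section~\ref{sec:tagged_fragmentation_example}. Under the change of variable $\zeta = \gamma^\beta$, the density $f_{\mathbf{T}_{1,n}}(\gamma)d\gamma$ is mapped to $f_{G_{K_n^{[\beta]}}}(\zeta)d\zeta$, after which I would substitute the explicit stable cumulants $\psi_\alpha^{(c)}(\gamma) = \frac{\alpha \Gamma(c-\alpha)}{\Gamma(1-\alpha)}\gamma^{\alpha-c}$, $\psi_{\beta/\alpha}^{(x)}(\gamma^\alpha) = \frac{(\beta/\alpha)\Gamma(x-\beta/\alpha)}{\Gamma(1-\beta/\alpha)}\gamma^{\beta-\alpha x}$, and $\psi_\beta^{(n)}(\gamma)=\frac{\beta\Gamma(n-\beta)}{\Gamma(1-\beta)}\gamma^{\beta-n}$ into each of the four conditional EPPFs appearing in that identity.

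The verification splits cleanly by factor. The fragmenter $\prod_{\ell=1}^r p_{\mathrm{Frag}}(\mathbf{c}_\ell | \vect{n}_\ell,\vect{\gamma})$ from \eqref{eq:p_frag_definition2} becomes $\zeta$-free in the stable regime: the $\gamma$-powers in the numerator---$\gamma^{\beta-\alpha x_\ell}$ from $\psi_{\beta/\alpha}^{(x_\ell)}(\gamma^\alpha)$ and $\gamma^{\alpha x_\ell - n_\ell}$ from the product $\prod_k \psi_\alpha^{(c_{k,\ell})}(\gamma)$---combine to $\gamma^{\beta-n_\ell}$, exactly matching the denominator $\psi_\beta^{(n_\ell)}(\gamma)\propto \gamma^{\beta-n_\ell}$; collecting the remaining Gamma ratios yields precisely $p_{\alpha,-\beta}(\mathbf{c}_\ell)$, invoking the $(\alpha,-\beta)$ EPPF formula with $p_\alpha$ expressed via Pochhammer symbols. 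This is where the stable-beta-gamma algebra asserts itself and provides the $\zeta$-independence that drives Pitman's original theorem. The coagulator \eqref{eq:full_coag_operator_expression2}, the fine-partition EPPF \eqref{pfinePKEPPFcond}, and the coarse-partition EPPF \eqref{eq:PK_coarse_eppf_J_groups} are then handled identically; the relevant mixed-moment denominators $\mathbb{E}[\sigma_0(1)^{\tilde K} e^{-\sigma_0(1)\zeta^{\alpha/\beta}}]$ and $\mathbb{E}[\sigma_\beta(1)^n e^{-\sigma_\beta(1)\zeta^{1/\beta}}]$ are identified with $\sum_j \mathbb{P}^{(K)}_{\beta/\alpha}(j)\zeta^j/\Gamma(j)$ and $\sum_k \mathbb{P}^{(n)}_\beta(k)\zeta^k/\Gamma(k)$, respectively, via \eqref{stableKpoisson} and \eqref{stableNpoisson}. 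Assembling the four factors with the transported arrival-time density produces both sides of \eqref{PHIBPGibbsduality} directly.

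The three consequences then follow by elementary manipulation. For item (1), I would integrate both sides of \eqref{PHIBPGibbsduality} against $d\zeta$ over $(0,\infty)$. Since the fragmenter is $\zeta$-independent and $f_{G_{K_n^{[\beta]}}}$ is a proper density, the fine-side weight $\frac{\zeta^r/\Gamma(r)}{\sum_k \mathbb{P}^{(n)}_\beta(k)\zeta^k/\Gamma(k)}\,f_{G_{K_n^{[\beta]}}}(\zeta)$ integrates to the mixing weight $\mathbb{P}^{(n)}_\beta(r)$ up to a combinatorial factor, reproducing the classical $p_\beta$ and, on the coagulator side, $p_{\beta/\alpha}(x_1,\ldots,x_r)\, p_\alpha(c_1,\ldots,c_K)$. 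This recovers \eqref{Pitmanduality} at $\theta=0$. For item (2), \cite[Section~4.4]{JamesStick} identifies the latent-time law for $\mathrm{PD}(\beta,\theta)$ as that of $G_{\theta/\beta + K_n^{(\beta,\theta)}}$; substituting its density in place of $f_{G_{K_n^{[\beta]}}}$ and repeating the integration recovers Pitman's full $(\alpha,\theta)/(\beta,\theta)$ duality. Item (3) is then immediate, since the master identity is linear in the mixing density: any latent-time density arising from \cite[Corollary~2.1, Lemma~2.1]{JamesStick} produces a valid duality for the corresponding $\mathbb{P}^{[n]}$ class.

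The main obstacle is the algebraic bookkeeping in the stable specialization---specifically, tracking the cancellation of $\gamma$-powers in the fragmenter and the Gamma-ratio constants that identify the residue with $p_{\alpha,-\beta}$. This is mechanical but must be carried out with care, as a single slip would obscure the $\zeta$-independence that is the conceptual engine of the result. Once that cancellation is confirmed, the remaining identifications are direct corollaries of Proposition~\ref{PropStablePowerlaw} and the stick-breaking calculus of \cite{JamesStick}, and the three consequences fall out as applications of the master equation's linearity in the mixing density.
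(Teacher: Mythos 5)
Your proposal is correct and follows essentially the same route the paper takes: specializing the general Poissonized duality of Theorem~\ref{thm:unified_duality_identity} (for $J=1$) with the explicit stable cumulants, identifying the moment denominators via \eqref{stableNpoisson}--\eqref{stableKpoisson}, transporting the arrival-time density through $T_n \overset{d}{=} G_{K_n^{[\beta]}}^{1/\beta}$, and exploiting the $\zeta$-free $\mathrm{PD}(\alpha,-\beta)$ fragmenter, with the consequences obtained by integrating against the appropriate mixing density. The only quibble is cosmetic: in consequence (1) the weighted terms integrate \emph{exactly} to $p_\beta(n_1,\ldots,n_r)$ (resp.\ $p_{\beta/\alpha}\cdot p_\alpha$) because the normalizing sum cancels against the same sum inside $f_{G_{K_n^{[\beta]}}}(\zeta)$, not merely "up to a combinatorial factor."
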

\begin{proof}For some further details, as otherwise the results can be deduced from our calculations above, it follows that the coarse partition, which corresponds generally to $p_{\mathrm{coarse}}$ in \eqref{eq:PK_coarse_eppf_J_groups} for $J=1,$ can be expressed as 
$e^{-\psi_{\beta}(\zeta^{1/\beta})}\prod_{\ell=1}^{r}\psi^{(n_{\ell})}_{\beta}(\zeta^{1/\beta})$ divided by $\mathbb{E}[(\sigma_{\beta}(1))^{n}e^{-\zeta^{1/\beta}\sigma_{\beta}(1)}].$ This equates with $p^{[n]}_{\beta}(n_{1},\ldots,n_{r}|\zeta)$ as represented in~\eqref{StablepowerEPPF}. Directly the coag operator is $p^{[K]}_{\frac{\beta}{\alpha}}(x_{1},\ldots,x_{r}|\zeta).$ The fragmentation operator arises as a consequence of Proposition~\ref{prop:fraginvariance} or can be computed directly using our general formulae. As to the fine partition EPPF, as a check against our general formulae,  we can derive it directly from Lemma~\ref{lem:stablearrival} where its exchangeably ordered form is given by $\prod_{k=1}^{K}\mathbb{P}(C_{k}=c_{k})\times \mathbb{P}(\mathscr{P}(\sigma_{\frac{\beta}{\alpha}}(\zeta))=K)$ divided by  $\mathbb{P}(\mathscr{P}(\sigma_{\beta}(\zeta))=n)$ yielding the expression in this Theorem. 
\end{proof}

\subsection{Identification of the  Duality \texorpdfstring{$\mathbb{P}^{[n]}$}{P[n]} Laws}
For fixed $\zeta$, Theorem~\ref{thm:master_duality} serves to further solidify what we have established in our developments in 
Section~\ref{sec:section:PK}. The surprise is, of course, the invariance of the $\mathrm{PD}(\alpha, -\beta)$ laws that act as the fragmenters, subject to conditioning under the counts $\mathscr{F}^{(H_{\ell})}_{j,\ell}$. We summarize the conditioned laws of the other components
Although we can do this otherwise from Theorem~\ref{thm:master_duality} and our developments in Section~\ref{sec:PKdualitylaws}, it is noteworthy that we can use the following fact with $h^{[n]}_{\beta,\zeta}(t)$ defined in~\eqref{hforStablepowerbias}
$$
\mathbb{P}^{[n]}_{\beta}(\zeta^{1/\beta}) =\mathrm{PK}_{\beta}(h^{[n]}_{\beta,\zeta}\cdot f_{\beta}){\mbox { with }}
\Phi^{[\beta]}_{n,r}:=\frac{\frac{\zeta^{r}}{\Gamma(r)}}
    {\sum_{k=1}^{n}\mathbb{P}^{(n)}_{\beta}(k)\frac{\zeta^{k}}{\Gamma(k)}}
$$
in conjunction with~\cite{HoJamesLau2025} as displayed in Section~\ref{sec:general_gibbs_duality}, to immediately conclude the following results:
\begin{enumerate}
\item The distribution of the coarse mass partition $\mathbf{V}_{\beta}$ given $I(\zeta^{1/\beta},1)=Z(\zeta^{1/\beta},1)=n,$ is
$$
\mathbb{P}^{[n]}_{\beta}(\zeta^{1/\beta})
$$
\item The distribution of the fine mass partition $\mathbf{P}_{\alpha}=\mathrm{Frag}_{\alpha,-\beta}(\mathbf{V}_{\beta})$  given $I(\zeta^{1/\beta},1)=Z(\zeta^{1/\beta},1)=n,$ is, as a special case of~\eqref{lawofPDfine},
\begin{equation}
\label{lawofPDfinepowern}
\int_{0}^{\infty}\mathrm{PD}(\alpha|s) \left[ \int_{0}^{\infty}h^{[n]}_{\beta,\zeta}(sy^{1/\alpha})f_{\frac{\beta}{\alpha}}(y)dy \right] f_{\alpha}(s)ds.
\end{equation}

\item The distribution of the coagulator $\mathbf{Q}_{\frac{\beta}{\alpha}}$ given $\mathscr{A}(\zeta^{1/\beta},1)=K,$ and $I(\zeta^{1/\beta},1)=n,$
is 
$$
\mathbb{P}^{[K]}_{\frac{\beta}{\alpha}}(\zeta^{\alpha/\beta})
$$
\end{enumerate}

\begin{rem}
See \cite[Section 6]{HoJamesLau2025} for related details.
\end{rem}

\section{The Dynamic Framework: An Extension of the Duality}\label{sec:DynamicCouplin}
We now formalize the continuous-time framework previewed in Section~\ref{section1:Simtimefragcoag}. The static, four-part duality established in Theorem~\ref{coupledsupertheorem} is built upon an intrinsic L\'evy-It\^o structure. Recognizing this foundational property allows for the immediate and natural extension of the entire framework to continuous time.
This extension is achieved by leveraging the established theory of homogeneous fragmentations developed by Bertoin~\cite{BerFrag}. The move to continuous time becomes a direct matter of identifying our mean measure $\nu(d\mathbf{t}, d\lambda) = \left( \prod_{j=1}^J \mathbb{P}(\sigma_j(\lambda) \in dt_j) \right) \tau_0(\lambda) d\lambda,$ see~\eqref{eq:joint_measure_def} and   \ref{eq:joint_measure_decomp}, as being "analogous" to the  dislocation measure within his framework. This act of identification—a consequence of our framework's design—alleviates the technical burden of constructing and verifying the time dynamics from first principles. The importance of this dynamic extension lies in its implications. It yields tractable and novel duality couplings between $\Lambda$-Fleming-Viot processes, structured $\Lambda$-coalescents, and Homogeneous Fragmentation Processes (HFP), with computational tractability enabled by the mixed Poisson representations of Theorem~\ref{coupledsupertheorem}. Furthermore, this dynamic view solidifies a fundamental shift in the modeling perspective: it moves the focus from time-indexed mass partitions, which describe relative abundances, to a more robust model of absolute abundance rates. These rates are dictated by the jumps of the underlying subordinators over disjoint time intervals, providing a direct, mechanistic description of innovation and population growth through time. In essence, this is a continuous-time realization of the ever-present but almost forgotten species sampling component articulated by Pitman~\cite{PitmanPoissonMix}. 

To amplify our use of the term analogous in reference to $\nu,$ it induces laws on spaces of mass partitions which is the usual framing of dislocation measures. When subordinators do not permit zeros this is effectively one to one. However, as a subtle point our mean abundance rates perspective via jumps endures when proper definition of relative rates based on zero total mass does not. In effect one can push to normalization on the thinned spaces off of zero sets as described in Section~\ref{sec:posterior_decomp}. There is also the subtlety of issues addressed by a rates based approach over a comopositional approach as discussed in the static PHIBP application to microbiome data in~\cite{hibp25}.

\subsection{The Two-Layered Temporal Structure}
The conceptual foundation for this dynamic view is the recognition that our hierarchical model is built upon a family of subordinated L\'evy processes. The system's evolution is not governed by a single, absolute time $t.$ Instead, its temporal structure is two-layered:

\begin{itemize}
\item A single global subordinator, $\sigma_{0}(t)$ serves as a system-wide master clock. Its jumps correspond to major events that affect all populations simultaneously, representing a shared evolutionary or demographic history. 
\item A family of independent local subordinators, $\sigma_j(t)$, act as local clocks. Each is specific to a population $j\in [J]$ and marks events relevant only to that population's unique trajectory.
\end{itemize}
The critical mechanism is subordination: the local clocks do not advance according to absolute time $t$, but according to the time measured by the master clock $\sigma_{0}(t).$ The complete description of the system's state is captured by the joint evolution:
\begin{equation}
\label{eq:subordination_revised}
\left( \sigma_j(\sigma_0(t)), \quad j \in {1, \dots, J}, \quad \sigma_0(t) \right)_{t \ge 0}.
\end{equation}
Including $\sigma_{0}(t)$ explicitly is essential, as it carries the information about the timing and magnitude of shared, system-wide events.
\subsection{The Poisson Random Measure Construction}
The formal bridge from this conceptual structure to a well-defined, continuous-time stochastic process is forged by extending the model's foundational Poisson Random Measure (PRM) from a static space of potential jumps to a dynamic space of time and jumps. This yields the central mathematical construction:
\begin{equation}
\label{homfragdynamics}
dt \otimes \nu(d\mathbf{t}, d\lambda).
\end{equation}
This object is the engine of our entire dynamic theory:
\begin{itemize}
\item The term $dt$ represents the infinitesimal flow of objective, external time.
\item The L\'evy measure $\nu(d\mathbf{t},d\lambda)$ on  the product space encodes the full universe of potential changes: $\mathbf{t}$ represents event types (which sub-population is affected), while $\lambda$ represents event magnitudes (the size of a fragmentation or coalescence).
\item The tensor product $\otimes$ constructs a new measure on the product space of time and events, signifying that for any infinitesimal interval $dt,$ the process has access to the entire set of possible events specified by $\nu$.
\end{itemize}
This construction is rigorously justified by the L\'evy-It\^o decomposition. This theorem guarantees that a process driven by a PRM with mean measure \eqref{homfragdynamics} is a L\'evy process, inheriting the properties of stationary and independent increments. The resulting dynamics are time-homogeneous, providing a principled foundation for modeling steady-state evolutionary dynamics in the spirit of Bertoin's~\cite{BerFrag} framework for homogeneous fragmentations (HFP).

\subsection{Coupled duality for $\Lambda$-Fleming Viot, $\Lambda$-Coalescents and HFP processes}
The static construction of Theorem~\ref{coupledsupertheorem} extends naturally to continuous time. At $t=1,$ the coupled processes were defined via the jumps $(s_{j,k})_{k\ge 1},$ $(\lambda_{l})_{l\ge 1},$ $(\sigma_{j,l}(\lambda_{l}))_{l\ge 1}$ and $((s_{j,k,l})_{k\ge 1})_{l\ge 1}=(s_{j,k})_{k\ge 1}$ and corresponding masses $(U_{j,k,l},Y_{l})$ encode corresponding couplings and dualities on spaces of random bridges and mass partitions as illustrated in Section~\ref{sec:PKdualitylaws}.  We extend this to continuous time by working with $((\sigma_{j}\circ \sigma_{0})(t); j\in[J],\sigma_{0}(t))_{t\ge 0}$ so that constructs over any non-overlapping intervals are independent and otherwise  emulate the static constructions, where, for instance,  
$(I_{j}, \mathscr{A}_{j}, (\mathscr{F}^{(\lambda_l)}_{j,l})_{l \ge 1}, Z_j, j\in [J])([0,t])=(I_{j}, \mathscr{A}_{j}, (\mathscr{F}^{(\lambda_l)}_{j,l})_{l \ge 1}, Z_j, j\in [J])([0,s])+(I_{j}, \mathscr{A}_{j}, (\mathscr{F}^{(\lambda_l)}_{j,l})_{l \ge 1}, Z_j, j\in [J])((s,t])$ is a decomposition over time into two independent pieces where at any fixed time the static properties hold. Hence, this leads to a corresponding coupled sets of bridges say $(F_{j}(\sigma_{0}(t),y), G_{0}(t,y),(F_{j,l}(t,y))_{l\ge 1}, G_{j}(t,y))_{j\in [J]},$ and their coupled mass partitions,  which are a flow of bridges in the sense of \cite{BerLegall00}, satisfying the compositional semi-group property,  and hence are $\Lambda$-Fleming Viot processes that are generator dual to corresponding $\Lambda$-coalescents and by time reversal HFP. Note we have expressed the fine bridges $(F_{j}(\sigma_{0}(t),y), j\in[J])$ as time changed processes, this description carries over to their corresponding HFP, viewed as HFP in the sense of Bertoin~\cite{BerFrag} subject to a further time change. One can work with any of these coupled constructs directly in continuous time to obtain equivalent representations of the same underlying process. The mixed Poisson formulation extends the framework to subordinators that can hit zero with positive probability—a regime where bridge-based constructions are not well-defined. As detailed in Section~\ref{sec:posterior_decomp}, this framework does not require conditioning the process away from zero. Instead, bridges are understood to be well-defined at the jump times of the underlying Poisson process, as these are precisely the moments when the total mass is guaranteed to be positive and normalization is possible. The conditioning on an observation, which is central to the decomposition, inherently singles out these jump events without artificially altering the state space of the underlying subordinator. This is exploited in the fixed time application to microbiome modelling in~\cite{hibp25}.

\begin{rem}[Modularity-extensions]
\label{rem:modularity}
The modularity of our L\'evy-It\^o system allows extensions to abstract Polish spaces $\Omega$ via a mean measure on that space, say $W(d\omega)$, as follows: 
\[
dt \otimes \nu(dt, d\lambda) \otimes W(d\omega).
\]
For instance, $W$ could be It\^o's excursion measure $n(de)$ on the space of excursions, coupling each lineage with an excursion path, or any other $\sigma$-finite measure subject to standard integrability conditions and the creativity of the modeller.
\end{rem}

This principle of subordinated time, made concrete by the measure in \eqref{homfragdynamics}, brings to life four distinct but intimately coupled stochastic flows.

\subsection{The Four-Component Dynamic Flow}
\begin{enumerate}
    \item \textbf{\texttt{Z(t)}: The Structured Coalescent Process (The Backward View of Ancestry)}\\
    Viewed backward in time, the framework defines a structured coalescent process, $Z(t) = (Z_1(t), \dots, Z_J(t))$, on the lineages sampled from the $J$ groups. This process defines a novel class of structured $\Lambda$-coalescents. The subordination principle manifests with mathematical precision in its coagulation law. A fundamental result in the theory of L\'evy processes states that the Laplace exponent of a subordinated process is the composition of the individual exponents. Accordingly, the exponent governing mergers in group~$j$, $\Psi_{\text{coag}}(\gamma)$, is given by $\Psi_0(\psi_j(\gamma))$. This is the direct spectral consequence of the time-domain subordination in \eqref{eq:subordination_revised}. This structure provides the model's key explanatory power: jumps in the local clock $\sigma_j(t)$ drive local, within-group mergers, whereas jumps in the master clock $\sigma_0(t)$ drive synchronous, system-wide merger events, providing a mechanism for correlated demographic histories.

    \item \textbf{\texttt{I(t)} \& \texttt{A(t)}: The Diversification and Allocation Engine (The Forward View of Ecology)}\\
    Viewed forward in time, the very same underlying PRM generates a dual flow modeling the emergence of diversity.
    \begin{itemize}
        \item The \textbf{Fragmentation Flow \texttt{I(t)}} acts as an engine of novelty, describing the continuous birth of new fine-grained features (e.g., alleles, traits). Its rate is governed by the ticking of the subordinated clocks.
        \item The \textbf{Dynamic Allocation Flow \texttt{A(t)}} evolves as a time-continuous Allocation Process, modeling how these new features are partitioned among populations.
    \end{itemize}
    Together, they provide a mechanistic model for ecological diversification and niche construction. The shared master clock, $\sigma_0(t)$, ensures that major bursts of innovation and reallocation can be coupled across otherwise distinct populations.

\item \textbf{\texttt{F(t)}: The Conditional Lineage-Specific Flow and Post-Colonization Adaptation}\\
The framework provides a granular, conditional view of evolution through the process $F^{(H_{\ell})}_{j,\ell}(t)$, which we interpret as a formal model for post-colonization adaptation. This dynamic unfolds in two stages. First, a jump in the master subordinator, $\sigma_0(t)$, establishes a new lineage~$\ell$ across all environments. This system-wide event is analogous to a founder event~\citep{Mayr1954}, which creates the conditions for subsequent divergent evolution. 

Following this founding, the process $F^{(H_{\ell})}_{j,\ell}(t)$ describes the subsequent, private evolutionary trajectory of that lineage as it undergoes local adaptation to the specific selective pressures of environment~$j$~\citep{Kawecki2004}. Taken together, the overall system provides a generative mathematical process for the patterns of adaptive radiation~\citep{Schluter2000}, whereby a single ancestral lineage diversifies to fill distinct ecological niches.
\end{enumerate}

\subsection{Broader Implications, Validation and Fragmentation Time Invariance}
For further clarity, \( (\mathbf{I}(t), \mathbf{Z}(t)) \), when viewed over time \( t \geq 0 \) moving forwards:
\begin{itemize}
\item \( \mathbf{I}(t) \) are time-changed homogeneous fragmentation processes (HFP), with each component driven by the subordinator \( \sigma_j \) but run over the random time \( \sigma_0(t) \);
\item \( \mathbf{Z}(t) \), viewed marginally, are homogeneous fragmentation processes (HFP) based on the composition \( \sigma_j \circ \sigma_0 \), with \( \sigma_0(1) \) replaced by \( \sigma_0(t) \) throughout.
\end{itemize}
Viewing time backwards, the \( \mathbf{Z} \) processes are marginally \( \Lambda \)-coalescents. That is to say subject to time change for $\mathbf{I},$ the marginal distributions of the  component vectors of \( (\mathbf{I}, \mathbf{Z}) \) are familiar objects appearing in the literature ~\cite{BerFrag, Pit06}.

To add for further clarity
\begin{itemize}
\item \( \mathbf{A}(t) \) is the dynamic Allocation process which is encoded by the partitions of \(\mathbf{G}_{0}(t)\), an HFP driven by \(\sigma_{0}\), where the partitions are restrictions to the total block counts of the partition produced by \(\mathbf{I}(t)\).
\item For illustration, we can take \(J=1\), let \([m]=\{1,2,\ldots,m\}\), set \(\Pi_{\mathbf{I},m}(t)\) to be the partition produced by \(\mathbf{I}(t)\) restricted to \([m]\) with corresponding block count \(K_{\mathbf{I},m}(t)\). \(\mathbf{A}_{m}(t)\) is encoded by the information in \(\Pi_{\mathbf{G}_{0},K_{\mathbf{I},m}(t)}(t)\), which is the partition of \(\mathbf{G}_{0}(t)\) restricted to \([K_{\mathbf{I},m}(t)]\).
\item Perform (simultaneously) the Coag or Assembly based on \((\Pi_{\mathbf{I},m}(t),\mathbf{A}_{m}(t))\) equivalently \((\Pi_{\mathbf{I},m}(t),\Pi_{\mathbf{G}_{0},K_{\mathbf{I},m}(t)}(t))\) to get \(\Pi_{\mathbf{Z},m}(t)\).
\item Apply for time \(t+s\), to verify that viewing \(\mathbf{G}_{0}\) as a forward in time (HFP) is crucial to maintain the usual expected dynamics for \(\Pi_{\mathbf{I},m}(t), \Pi_{\mathbf{I},m}(t+s)\) with \(K_{\mathbf{I},m}(t+s)\ge K_{\mathbf{I},m}(t)\) and similarly for \(\Pi_{\mathbf{Z},m}(t),\Pi_{\mathbf{Z},m}(t+s)\) viewed backward in time. If one mistakenly viewed \(\mathbf{G}_{0}\) as a coalescent in forward time, this consistent evolution would break, as the allocation would be merging blocks while the fine partition is fragmenting.
\item The fragmentation operates in an analogous manner.
\end{itemize}
Crucially, at every time instance, this is not a sequential procedure (as described in the static \textbf{Setup~\ref{setup:canonical_duality}}). All components—the fine fragmentation, the allocation, and the coarse assembly—are generated in lockstep.

\begin{rem} (Dynamic Extension and the Hidden Mixed Poisson Structure).  
The continuous-time process version of the coupled mixed Poisson processes in Section~\ref{sec:coupledconstruct} and Theorem~\ref{coupledsupertheorem} is obtained by running the entire static construction through the random clock \( \sigma_0(t) \) for \( t \geq 0 \), with the dynamics dictated by the underlying Homogeneous Fragmentation Process. Crucially, the mixed Poisson framework reveals itself as the hidden sampling component inherent to any species sampling representation of an HFP. Just as a typically expressed HFP evolves over spaces of time-dependent mass partitions, its coupled mixed Poisson process evolves in parallel, encoding the combinatorial structure of partitions sampled from these evolving masses. The following corollaries characterize the snapshot behavior of this coupled system at each fixed \( t \). The distributional time-invariance  means that the statistical properties of the various components are identical at any snapshot in time $t > 0,$ though the specific realizations of the partitions will, of course, differ.
\end{rem}

\begin{cor}[Time-Invariance of the Fragmentation Kernel]
\label{cor:time_invariance}
Consider the continuous-time lift of the coupled process from Theorem~\ref{coupledsupertheorem}, obtained by replacing the L\'evy measure $\tau_0$ of the master subordinator with $t \cdot \tau_0$.
Then, for any $t > 0$, the following components remain time-invariant:
\begin{itemize}
\item The conditional distribution of the \textbf{global jump sizes} $H_{\ell}$ given the associated counts $\vec{n}_{\ell}$,
\item The distribution of all block counts, except for number of blocks dictated by $\varphi.$
\item The \textbf{fragmentation kernel} $p_{\mathrm{frag}}(\cdot \mid \vec{n}_{\ell}, \vec{\gamma})$.
\end{itemize}
The time-dependent quantities are:
\begin{itemize}
\item The \textbf{total number of species} $\varphi(t) \sim \mathrm{Poisson}\left(t\Psi_0\left(\sum_{j=1}^{J} \psi_j(\gamma_j)\right)\right)$,
\item The \textbf{coagulation operator} $p_{\mathrm{coag}}(\cdot\mid\cdot,\vec{\gamma},t)$, based on jumps of the subordinator $\sigma_{0}$ up till time $t$
\item The evolving states $\mathbf{I}(t)$ and $\mathbf{Z}(t)$. and corresponding EPPFs $p_{fine}$ and $p_{coarse}$
\end{itemize}
The duality in Theorem~\ref{coupledsupertheorem} continues to hold at each time $t$ with reflected through~Theorem~\ref{thm:unified_duality_identity} and more generally Section~\ref{Sec:jointEPPF} with $t\tau_{0}$ in place of $\tau_{0}.$
\end{cor}
\begin{cor}[Time-Invariance of Species-Level Fragmentation]
\label{cor:frag_invariance}
Consider the continuous-time lift of the coupled process from Theorem~\ref{coupledsupertheorem} at each fixed time $t$. While three components evolve with the growing number of species $\varphi(t)$:
\begin{itemize}
\item $I_j(\gamma_j, y) = \sum_{\ell=1}^{\varphi(t)} \mathscr{F}^{(H_\ell)}_{j,\ell}(\gamma_j, y)$,
\item $\mathscr{A}_{j}(\gamma_j, y) = \sum_{\ell=1}^{\varphi(t)} X_{j,\ell} \mathbb{I}_{\{\tilde{Y}_{\ell} \leq y\}}$,
\item $Z_j(\gamma_j, y) = \sum_{\ell=1}^{\varphi(t)} \mathscr{F}^{(H_\ell)}_{j,\ell}(\gamma_j, 1)\mathbb{I}_{\{\tilde{Y}_{\ell} \leq y\}}$,
\end{itemize}
the fourth component remains statistically invariant:
\begin{itemize}
\item $\mathscr{F}^{(H_{\ell})}_{j,\ell}(\gamma_j, y)$ has distribution independent of both $t$ and $\varphi(t)$.
\end{itemize}
The fragmentation process for each species is born with identical statistical properties, unaffected by the total number of species in the system.
\end{cor}

We now provide a remark clarifying why this phenomena holds via scale invariance of $\mathrm{MtP}$ (mixed truncated Poisson laws).

\begin{rem}Remark (Componentwise MtP invariance across the four-part system). The MtP distributional invariance under L\'evy scaling applies uniformly to all four components of the coupled system via the inner count variables $C_{j,k}, (C_{j,k,\ell})$ the allocation counts $X_{j,\ell},$ and the species-level fragmenters $\mathscr{F}^{(H_\ell)}_{j,\ell}$— which is directly seen once $\sigma_{j}\circ\sigma_{0}$ is viewed as a single subordinator. Scaling the parent L\'evy measure $\tau_{0}$ to  $t\times \tau_{0}$ only changes the number of observed macro-jumps $\varphi(t)$; the conditional kernels governing per-lineage refinement and allocation remain unchanged. In effect, the entire four-part architecture is, in reference to the fractal structure of the dolls, “Matryoshka-stable”: thinning and scaling decide how many dolls appear, but the kernel painted on each doll—governing the 4 component system does not depend on $t.$ This yields clean separations for inference (global intensity vs. local refinement), robust simulation (size control without re-tuning kernels), and a snapshot-by-snapshot persistence of the coag/frag duality.
\end{rem}

\subsection{Simultaneous Duality for Beta-Coalescents and Time-Changed $\alpha$-Stable HFP}\label{subsec:beta_simultaneous_duality}

For $J=1$ with subordinators $\sigma_{\alpha}(t)$ and $\sigma_{\beta/\alpha}(t)$ ($0 < \beta < \alpha < 1$), the $\mathrm{Beta}(2-\beta,\beta)$-coalescent $Z_{\beta}(t)$ and time-changed $\alpha$-stable HFP $I_{\alpha}(\sigma_{\beta/\alpha}(t))$ emerge as simultaneous duals satisfying the triple ordering~\eqref{eq:ordering}, with state-dependent operators from Sections~\ref{Sec:jointEPPF}-\ref{subsec:marginalized_duality}.

\begin{rem}(Distinction from Time-Reversal Duality)
Unlike Bertoin's time-reversal duality~\citep{BerFrag} (time-reversed $\beta$-stable HFP = $\mathrm{Beta}(2-\beta,\beta)$-coalescent), we establish forward-time coupling: at any fixed $t$, $Z_{\beta}(t)$ (simultaneously a $\mathrm{Beta}(2-\beta,\beta)$-coalescent backward and $\beta$-stable HFP forward) couples with $I_{\alpha}(\sigma_{\beta/\alpha}(t))$ via Theorem~\ref{coupledsupertheorem}. Crucially, $Z_{\beta}(t)$ evolves on physical time while $I_{\alpha}$ runs on subordinated clock $\sigma_{\beta/\alpha}(t)$.
\end{rem}

\subsubsection{$J$-Spider Extension}
Setting $\tau_j(s) = \frac{\theta_j}{\Gamma(1-\alpha)}s^{-\alpha-1}$, $\tau_0(\lambda) = \rho_{\beta/\alpha}(\lambda)$, yields $J$ coordinated pairs $\mathbf{I}(t) = (I_{\alpha,p_j}(\sigma_{\beta/\alpha}(t)))_{j \in [J]}$ and $\mathbf{Z}(t) = (Z_{\beta,p_j}(t))_{j \in [J]}$ with $p_j = \theta_j / \sum_{k=1}^J \theta_k$, where using $\sum_{j=1}^J \sigma_j(y) \overset{d}{=} \sigma_{\alpha}(y) \times \left[\sum_{j=1}^J \theta_j / \alpha\right]^{1/\alpha}$:

\begin{itemize}
\item \textbf{Consistency:} $\sum_{j=1}^J I_{\alpha,p_j}(\sigma_{\beta/\alpha}(t)) \stackrel{d}{=} I_{\alpha}(\sigma_{\beta/\alpha}(t))$, $\sum_{j=1}^J Z_{\beta,p_j}(t) \stackrel{d}{=} Z_{\beta}(t).$
\item \textbf{Spider structure:} Master clock $\sigma_0(t)$ coordinates all $J$ components~\citep{BPY, JamesYor}, with $p_j$ as ray weights.
\item \textbf{Correlation:} Jumps in $\sigma_0(t)$ induce simultaneous cross-population mergers and fragmentations.
\item \textbf{Complete System} This system is completed by corresponding Allocation and Fragmentation processes (\texttt{A(t)}, \texttt{F(t)}).
\item \textbf{Generalized Gamma extensions} As to modelling capabilities, (ARG) applications or otherwise, a more flexible duality structure of Spiders is enabled by generalized gamma process specifications in Remark~\ref{rem:gengammspiders} which is closer to the specifications in~\cite{JamesYor}. Importantly, complex but implementable computational inferential and predictive procedures can be directly adapted from the explicitly executed developments  in~\cite{hibp25}.
\end{itemize}

\begin{rem}(Parameter Range)
Duality requires $0 < \beta < \alpha < 1$ (ensuring $\beta/\alpha \in (0,1)$). As $\beta \to 0$, rescaling $t \to t/\beta$ for the Kingman limit causes $\sigma_{\beta/\alpha}(t/\beta) \to \infty$ (total fragmentation), destroying duality. Non-trivial coupling exists for fixed $\beta \in (0, \alpha)$.
\end{rem}


\begin{rem}[Analogy to Multi-Deme Population Genetics]
The framework provides a new generative engine for multi-deme models in population genetics. The groups $[J]$ are interpreted as distinct demes, the species $(Y_l)$ as alleles, the local subordinators $(\sigma_j)$ as within-deme evolutionary processes (e.g., genetic drift), and the global process $\sigma_0$ as the shared demographic history of the metapopulation. Within this analogy, the sharing of species across groups implicitly models migration or shared ancestry, providing a structure related to multi-type or structured coalescent processes \citep{BerestyckiEtheridgeVeber2013,Johnston2023,mohle2024multi}, including recent models emerging from individual-based dynamics with demographic bottlenecks \citep{PraEtheridge2025Deme}. More than just an analogy, this construction introduces two important, synergistic innovations for such models. First, the shared global process $\sigma_0$ acts as a tether linking the evolutionary dynamics across all demes. This naturally induces correlated demographic events, such as simultaneous mergers across multiple groups from a single punctuated event—a "lock-step" effect that is difficult to model tractably with classical approaches. Second, the framework exhibits \textit{simultaneous structural duality}. This property ensures that for each deme $j$, the genealogical coagulation process is maintained in a precise, dual relationship with the corresponding species fragmentation process. By combining this per-deme duality with the global tethering, the framework moves beyond describing only the marginal genealogy (the structured coalescent) to provide, for the first time, a tractable, coupled representation of finer-grained ancestral processes (like a structured Ancestral Recombination Graph) that are dynamically correlated across the entire metapopulation. The multi-group PHIBP structure underlying this framework has been successfully applied to complex microbiome data \citep{hibp25}, demonstrating its computational tractability and scalability (allowing for the addition of new demes after initial observations). While that work focused on the species sampling aspects without explicitly invoking the duality, it establishes the practical viability of the four-component architecture that enables the simultaneous structural duality developed here. See Section~\ref{sec:hARG} for broader modeling capabilities.
\end{rem}

\section{Cloud duality: The h-biased L\'evy–It\^o Coupled Duality Constructor}\label{sec:dualitymachine}

We now record an abstract ``constructor'' that isolates the mechanism behind Theorem~\ref{coupledsupertheorem}: an h-biased L\'evy–It\^o/PRM architecture on Polish spaces, showing that the duality is driven by point-process regrouping (geometry layer) together with mixed-Poisson thinning/sampling (sampling layer), rather than features specific to interval partitions.

\subsection{Setup and h-biased framework}\label{subsec:setup}

The architecture of this model is best understood not as a specific solution, but as a general-purpose \emph{L\'evy-It\^o coupled duality constructor}. Built from a single Poisson random measure (PRM) in the spirit of Pitman's \emph{big Poisson} formulations, this construction gains operational clarity and modeling flexibility when recast through the \emph{h-biased sampling} framework of Pitman and Yor~\citep{PY92} and Perman, Pitman and Yor~\citep{PPY92}, which we extend here to conditionally independent general Cox processes driven by a common PRM $N_0$. This perspective provides a principled mechanism for translating interpretable features of abstract objects into the intensity structure of the underlying Poisson random measure.

Our construction is the special case where the `size' function $h$ is the identity: the points of the underlying PRMs are the jump sizes themselves ($s_{j,k,l}$ and $\lambda_l$), so a point's value \emph{is} its size for biasing. The framework's true extensibility is revealed by recognizing that $h$ can be any arbitrary positive measurable function mapping points from an abstract Polish space to a `size' value. To formalize this, let $S_{j}$ for $j = 0, \ldots, J$ be abstract Polish spaces supporting points ${(\mathbf{e}_{j,k})}_{k \ge 1}$ of independent PRMs with respective $\sigma$-finite mean measures $\mathbf{n}_{j}(d\mathbf{e}_{j})$, and let $h_{j}\colon S_{j} \to (0,\infty)$ be measurable. The random measures central to our work are specific instances of the h-biased measures $\sigma_{j}(dv) = \int_{S_{j}} h_{j}(\mathbf{e}_{j})N_{j}(d\mathbf{e}_{j}, dv)$, where $N_j$ are PRMs on $S_j \times [0,1]$ with mean measure $\mathbb{E}[N_{j}(d\mathbf{e}_{j}, dv)]=\mathbf{n}_{j}(d\mathbf{e}_{j})dv$. For example, as indicated in \citep[p. 33]{PPY92}, in the excursion case of their Theorem~3.1, $S$ is the product of an interval on the local time scale and a path space of Markov excursions, with $h(L, \mathbf{e})$ equal to the duration of excursion $\mathbf{e}$ during local time $L$.

We have used $[0,1]$ to accommodate the $\mathrm{Uniform}[0,1]$ variables $(Y_{l},U_{j,k,l})$, but this choice is not essential: one may replace $[0,1]$ by any Polish mark space endowed with a suitable non-atomic distribution, as in~\cite{hibp25}. Via standard quantile transforms, the $\mathrm{Uniform}[0,1]$ variables can be viewed simply as generic marks, so the underlying PRM structure is unchanged. In particular, the lift to continuous time in $t$ remains an \emph{external} time mechanism encoded by the product measure $dt\otimes \nu$, rather than something intrinsic to the mark space. Moreover, the within-group sampling operations of~\citep{hibp25}, together with the various standard PRM decompositions, as well as those developed by Pitman and Yor in the context of excursions, Bessel bridges, and self-similar Markov processes~\citep[e.g.][]{PitmanYor1982BesselBridges,PY2001}, can be incorporated into this framework and continue to apply verbatim under such a change of mark space.

\subsection{The cloud duality principle}\label{subsec:cloudduality}
We introduce here the notion of \emph{cloud duality}: the principle that coagulation-fragmentation dualities, including Pitman's classical construction as a special case, are fundamentally about deterministic reorganizations of point clouds on Poisson random measures, with the various object-level dualities (on partitions, trees, loops, etc.)  arising as projections under different sampling schemes. We develop concrete instantiations of this principle in Section~\ref{subsec:examples}, showing how it covers loop-soup constructions, stable tree dualities, and population genetic models within a single framework.
To make this principle concrete, we show how our four-component system instantiates cloud duality in full generality.

By identifying our jumps as $h_{j}(\mathbf{e}_{j,k,l}) = s_{j,k,l}$ and $h_{0}(\mathbf{e}_{0,l}) = \lambda_{l}$, the entire four-component system operates on arbitrary spaces, and the coupled duality of Theorem~\ref{coupledsupertheorem} extends to this general setting:
\begin{equation} \label{coupledcoarsetofinevectorQUADabstract}
\begin{pmatrix} I_j(\gamma_j, y) \\ \mathscr{A}_j(\gamma_j, y) \\ (\mathscr{F}^{(\lambda_l)}_{j,l}(\gamma_j, y))_{l\ge 1} \\ Z_j(\gamma_j, y) \end{pmatrix}
:=
\begin{pmatrix}
\sum_{l=1}^{\infty}\sum_{k=1}^{\infty}\mathscr{P}_{j,k,l}(h_{j}(\mathbf{e}_{j,k,l})\gamma_{j})\mathbb{I}_{\{U_{j,k,l}\leq y\}} \\
\sum_{l=1}^{\infty}\sum_{k=1}^{\infty}\mathbb{I}_{\{\mathscr{P}_{j,k,l}(h_{j}(\mathbf{e}_{j,k,l})\gamma_{j})>0\}}\mathbb{I}_{\{Y_{l}\leq y\}} \\
\left(\sum_{k=1}^{\infty}\mathscr{P}_{j,k,l}(h_{j}(\mathbf{e}_{j,k,l})\gamma_{j})\mathbb{I}_{\{U_{j,k,l}\leq y\}}\right)_{l\ge 1}\\
\sum_{l=1}^{\infty}\left[\sum_{k=1}^{\infty}\mathscr{P}_{j,k,l}(h_{j}(\mathbf{e}_{j,k,l})\gamma_{j})\right]\mathbb{I}_{\{Y_{l}\leq y\}}
\end{pmatrix}
\end{equation}
The compound Poisson representations and resulting joint EPPFs follow directly from Theorem~\ref{coupledsupertheorem}, expressed in terms of the pushforward of the native measures $\mathbf{n}_j$, as can be seen in the EPPF-related calculations in \citep[Sections 3, 5, 8]{James2002}.
\medskip

\subsubsection{Geometry vs.\ sampling (at the level of point clouds)}
The coupled duality has two logically distinct layers on a single underlying Poisson random measure. The first is geometric: one starts
from the latent \emph{point cloud of atoms}
\[
\Bigl(\mathbf{e}_{0,l},\;(\mathbf{e}_{j,k,l})_{j\in[J],\,k\ge1}\Bigr)_{l\ge1},
\]
where each $\mathbf{e}_{j,k,l} \in S_j$ lives in its respective native space $S_j$, forming a family of \emph{child sub-clouds} (indexed by $l$) attached to a parent cloud. Coagulation
and fragmentation are then deterministic maps on this cloud: coagulation groups atoms into
$l$-packets (``clouds-of-clouds''), while fragmentation opens each packet back into its constituent
atoms; no conditioning, biasing, or thinning is involved in these rearrangements. The second layer is
probabilistic: the mixed Poisson/thinning mechanism (implemented by the marks $Y_l,U_{j,k,l}$ and the
Poisson counts) samples from these clouds according to the size maps $h_j: S_j \to \mathbb{R}_+$. It is this sampling layer---implemented via our mixed-Poisson marking/thinning scheme and the
Poisson decompositions of Section~\ref{sec:posterior_decomp}---that yields the compound Poisson
representations and the (joint) EPPFs. The induced dualities on the native spaces $S_j$ are thus realizations of this rate-based Poisson architecture, with the various partition- and bridge-valued descriptions arising as projections of the same underlying mixed Poisson sampling scheme.

\emph{In particular, the Allocation processes $(\mathscr{A}_j)_{j\in[J]}$ belong to the sampling
layer: they record which parent index $\ell$ is actually ``lit up'' in group $j$ under the mixed
Poisson sampling, dictating for each parent cloud $\ell$ in which native spaces $S_j$ (if any) its sub-clouds are actually sampled. This role becomes especially prominent in more exotic geometric choices of the spaces $S_j$.}
\subsection{Pushforward exponents and thinning decompositions on $S_{j}$}\label{subsec:pushforward}

Here, defining the Laplace exponent
\[
\psi_{j}(\gamma_{j})=\int_{\mathbf{S}_{j}}\left( 1 - e^{-h_{j}(\mathbf{e}_{j}) \gamma_{j}} \right) \mathbf{n}_{j}(d\mathbf{e}_{j}),
\]
the resulting thinning on the basic spaces $\mathbf{S}_{j}$ manifests as follows:
\begin{itemize}
    \item \textbf{The Latent Level ($j=0$):} The measure $\mathbf{n}_0$ decomposes into components
    that are respectively inactive and active relative to the total sampling effort
    $\sum_{j=1}^{J}\psi_j(\gamma_j)$:
    \[
    \mathbf{n}_{0}(d\mathbf{e}_{0})
    =
    e^{-h_{0}(\mathbf{e}_{0}) \sum_{j=1}^{J} \psi_{j}(\gamma_{j})} \mathbf{n}_{0}(d\mathbf{e}_{0})
    +
    \left( 1 - e^{-h_{0}(\mathbf{e}_{0}) \sum_{j=1}^{J} \psi_{j}(\gamma_{j})} \right) \mathbf{n}_{0}(d\mathbf{e}_{0}).
    \]
    The $\varphi$ selected points $(\mathbf{e}_{0,
    \ell},\,\ell\in[\varphi])$ in $\mathbf{S}_{0}$ are sampled from
    the distribution proportional to the active part
    \begin{equation}
    \label{SVPdist}
    \left( 1 - e^{-h_{0}(\mathbf{e}_{0}) \sum_{j=1}^{J} \psi_{j}(\gamma_{j})} \right) \mathbf{n}_{0}(d\mathbf{e}_{0}),
    \end{equation}
    and their associated sizes are $H_{\ell} = h_0(\mathbf{e}_{0,\ell})$. In addition, there is an
    unselected latent cloud $(\mathbf{e}'_{0,l})_{l\ge 1}$ with L\'evy measure
    \[
    e^{-h_{0}(\mathbf{e}_{0}) \sum_{j=1}^{J} \psi_{j}(\gamma_{j})} \mathbf{n}_{0}(d\mathbf{e}_{0}).
    \]

    \item \textbf{The Feature Level ($j=1, \ldots, J$):} For each $j\ge 1$, the measure $\mathbf{n}_j$
    decomposes according to the local sampling effort $\gamma_j$:
    \[
    \mathbf{n}_{j}(d\mathbf{e}_{j})
    =
    e^{-h_{j}(\mathbf{e}_{j})\gamma_{j}} \mathbf{n}_{j}(d\mathbf{e}_{j})
    +
    \bigl(1 - e^{-h_{j}(\mathbf{e}_{j})\gamma_{j}}\bigr) \mathbf{n}_{j}(d\mathbf{e}_{j}).
    \]
    
    The \emph{active} feature points have normalized law
    \[
    \frac{\bigl(1 - e^{-h_{j}(\mathbf{e}_{j})\gamma_{j}}\bigr) \mathbf{n}_{j}(d\mathbf{e}_{j})}{\psi_j(\gamma_j)},
    \qquad
    \psi_j(\gamma_j)
    =
    \int_{\mathbf{S}_j} \bigl(1 - e^{-h_j(\mathbf{e}_j)\gamma_j}\bigr) \mathbf{n}_j(d\mathbf{e}_j),
    \]
    which is the same for all latent parents. Given a latent atom $\mathbf{e}_{0,l}$ with size
    $\lambda_l = h_0(\mathbf{e}_{0,l})$, the associated feature-level atoms $(\mathbf{e}_{j,k,l})_{k\ge 1}$ in $\mathbf{S}_j$ form the union
    of two independent PRMs on $\mathbf{S}_j$ with mean measures
    \[
    \lambda_l e^{-h_j(\mathbf{e}_j)\gamma_j} \mathbf{n}_j(d\mathbf{e}_j)
    \quad\text{and}\quad
    \lambda_l \bigl(1 - e^{-h_j(\mathbf{e}_j)\gamma_{j}}\bigr) \mathbf{n}_j(d\mathbf{e}_j).
    \]
    Thus $\lambda_l$ scales both the inactive and active clouds of feature atoms, and in
    particular the aggregate contribution $\sum_{k\ge1} h_j(\mathbf{e}_{j,k,l})$, whereas the
    conditional law of an individual active atom on $\mathbf{S}_j$ has a shape that does not
    depend on $\lambda_l$ in this unconditional construction. The resulting state
    dependence after Poisson sampling is described in Section~\ref{sec:posterior_decomp}
    (cf.\ Section~2.2). See Salminen, Vallois and Yor~\cite[Theorem~10]{SVY2007} for related
distributions on excursion spaces under an It\^o-type measure tilted as in~\eqref{SVPdist}.
\end{itemize}

This modularity allows for the synthesis of novel dualities for objects like trees, graphs, or spatial processes, each with tractability via the
underlying counting process, in the sense that the joint laws can always be expressed through mixed Poisson counts and the decompositions of Section~\ref{sec:posterior_decomp}. Our results here (in particular Theorem \ref{coupledsupertheorem} expressed as \eqref{coupledcoarsetofinevectorQUADabstract} and the decompositions dictated by Section~\ref{sec:posterior_decomp}) complete the first part of the analysis:
they provide a general L\'evy–It\^o/PRM construction and an abstract coupled duality formulated at the level of the underlying Poisson point clouds, from which the various projected dualities on native spaces are obtained. In direct analogy with the case of operations on spaces of bridges and
mass partitions ($\mathbf{P}_{j} = (P_{j,k})_{k\ge 1}$, $\mathbf{Q}_0 =(Q_{0,l})_{l\ge 1}$, $\mathbf{V}_{j} = (V_{j,l})_{l\ge 1}$, and
$\mathbf{Q}_{j,l} = (Q_{j,k,l}))$ etc., what remains—and is genuinely application-specific—is the second part: formulating and interpreting the
concrete duality operations on the chosen state spaces (e.g. loops, trees, or graphs) that instantiate this abstract scheme. To be clear, this is not the
duality reflected directly on the mixed Poisson space, which is immediate, but rather what that duality projects to in terms of dual operations on the native
space of points.

\subsection{Multiple representations and algorithmic flexibility}\label{subsec:multiplerepresentations}

The $h$-biased mixed Poisson framework developed in Sections \ref{sec:coupledmixedPoisson}--\ref{sec:section:PK} delivers the \emph{hard duality}: explicit compound Poisson representations, joint EPPFs, tractable conditional laws, and the full analytic machinery that makes the duality computationally executable. This calculus emerges automatically from the mixed truncated Poisson structure and the thinning decompositions of Section~\ref{sec:posterior_decomp}.

The \emph{structural} coupling, however, exists at a more primitive level. The fundamental object is a tethered Cox process: given parent atoms $(\mathbf{e}_{0,l})_{l \geq 1}$ with associated marks $(Y_l)_{l \geq 1}$, the child processes $N_j$ have conditional intensity
\[
\mathbb{E}[N_j(d\mathbf{e}_j, dv) \mid (\mathbf{e}_{0,l}, Y_l)_{l \geq 1}] = \left(\sum_{l=1}^\infty f(\mathbf{e}_{0,l})\delta_{Y_l}(dv)\right) \mathbf{n}_j(d\mathbf{e}_j),
\]
where $f$ is any non-negative function satisfying $\sum_{l=1}^\infty f(\mathbf{e}_{0,l}) < \infty$ almost surely.

This Cox structure encodes a pathwise duality that persists regardless of how one samples from $N_j$. The duality arises from how the child atoms $(\mathbf{e}_{j,k,l})_{k \geq 1}$ decompose into clouds governed by $f(\mathbf{e}_{0,l})\mathbf{n}_j(d\mathbf{e}_j)$, versus how they assemble into the aggregate configuration $(\mathbf{e}_{j,k})_{k \geq 1}$ without reference to individual parent contributions.

The PRM framework itself provides substantial structure even without the $h$-biased layer: the pathwise coupling, the decomposition into parent-indexed clouds, coagulation-fragmentation as deterministic regrouping, the lift to continuous time via $dt \otimes \nu$, and the conditional independence structure given the parent configuration. One may sample directly from the native measures $\mathbf{n}_j$ without size-biasing, employ series representations, or use specialized algorithms for particular measure classes.

What the $h$-biased mixed Poisson layer adds is the Poisson calculus---the compound Poisson representations, explicit joint EPPFs, and tractable conditional laws that make the duality analytically executable. Without this layer, the architecture remains intact, but the distributional calculations must be developed independently for each choice of native measures and sampling scheme. If one simply wants a coupled coarse-fine system based on Beta coalescents, the Cox structure tells you how to build it. But deriving the joint partition probabilities or predictive distributions then becomes calculus one must carry out oneself---work that is perfectly legitimate, but work nonetheless.

\begin{rem}
This separation clarifies the scope of Theorem 2.1. The theorem provides both the structural coupling (robust to representation) and the computational machinery (requiring the $h$-biased framework). For applications needing only pathwise simulation---such as sampling $\mathrm{Beta}(2-\delta_j, \delta_j)$ coalescents directly via native measures across $0 < \delta_j < 2$---the Cox structure suffices. For applications requiring explicit partition probabilities or predictive distributions, the mixed Poisson framework substantially simplifies relevant PRM calculus.
\end{rem}

\subsection{Examples and applications}\label{subsec:examples}

We record two   geometric instantiations of the constructor—one in loop-soup/CLE settings and one in stable-tree settings—followed by a third example (ARG/TMRCA) meant primarily as a \emph{modeling} template for domain experts rather than a canonical geometric identification.

\subsubsection{Loop-soup/CLE instantiation}
For instance, in relation to questions of constructive theory, one could take
$(S_{j}, \mathcal{S}_{j}, \mathbf{n}_{j})$ as the Polish space of loops with SLE$_\kappa$
loop measure $\mu_\kappa := \mathbf{n}_{j}$—for example, a conformal loop-soup intensity
corresponding to CLE$_\kappa$ as in \citep{SW12}—or, in the cable-graph setting, as the
space of Brownian loops on a cable graph endowed with the critical loop-soup intensity of
\citep[Section~5.4]{werner2025switchingidentitycablegraphloop}, where each
$\mathbf{e}_{j,k,l}$ represents an individual loop. In this interpretation, the latent Poisson
random measure may be viewed as producing a \emph{cloud of loop-atoms} organized into parent-indexed
\emph{loop packets} $\bigl(\mathbf{e}_{0,l},(\mathbf{e}_{j,k,l})_{j\in[J],\,k\ge1}\bigr)_{l\ge1}$.
Coagulation/fragmentation on the native loop spaces then has a direct geometric reading as
grouping loops into packets (coagulation) and opening packets into their constituent loops
(fragmentation), i.e.\ deterministic rearrangements of the same underlying cloud.
In this loop-soup specialization, the \emph{geometric layer} is simply the deterministic regrouping of the same underlying loop configuration into packets versus individual loops, while the \emph{sampling layer} selects packets/loops according to $h_j$ (e.g.\ length, diameter, occupation time), producing the corresponding compound Poisson representations and EPPFs.

Our general duality result applies directly to such choices: choosing bespoke functions $h_j$
such as conformal radius, loop length, loop diameter, or total occupation time along a
distinguished edge yields novel coupled four-component dual systems, which can then be raised to
continuous-time processes via \eqref{homfragdynamics}. The additional mixed-Poisson
marking/thinning layer of the constructor then selects from these loop clouds according to the
observables $h_j$; it is this sampling layer—together with the Poisson decompositions of
Section~\ref{sec:posterior_decomp}—that produces the compound Poisson representations and the (joint) EPPFs. In particular, the Poissonian framework already supplies the structural duality in these loop-soup settings. Moreover, these sampling schemes admit Poisson decompositions of the same general type as in \citep{PPY92, PY92}.
To incorporate the switching property developed in \cite{werner2025switchingidentitycablegraphloop}---which operates in the same spirit as the Bessel bridge decompositions of \cite{PitmanYor1982BesselBridges}---one may employ a signed decomposition of the parent measure:
\[
\mathbf{n}_{0}(d\mathbf{e}) = \mathbf{n}_{0,+}(d\mathbf{e}) + \mathbf{n}_{0,-}(d\mathbf{e}),
\]
where $\mathbf{n}_{0,+}$ and $\mathbf{n}_{0,-}$ correspond respectively to configurations with an even and odd number of excursions connecting designated points. This creates two coupled $4 \times J$-component dualities over restricted configuration spaces that combine additively. Sampling from this decomposition proceeds either by parity-based colouring (in Werner's terminology, the uniform random even subgraph selection) or by the hierarchical schemes developed in \cite{hibp25}.
The choice of child objects $\mathbf{e}_{j,k,l}$ in this setting is flexible and points to natural pairings. For instance, one could take $S_1$ to be $\alpha$-stable L\'evy trees with $h_1(\text{tree}) = $ total mass, modeling genealogical structures within spatial habitats defined by the parent loops; or $S_1$ could be SLE$\kappa$ curves (for $\kappa \neq 4$) with $h_1(\text{curve}) = $ Loewner capacity, capturing conformal interfaces nested within loop clusters, such as level lines of the Gaussian free field at different heights. In each case, the size function $h_1$ determines the intensity of attachment to parent loops of size $h_0$, and Werner's switching property extends automatically: conditioning on connectivity through the child objects corresponds to overlaying an odd number of connecting structures, with the parity being conditionally independent of the occupation field $\Lambda$.

\subsubsection{Stable-tree instantiation}\label{subsec:stable_tree}
A second geometric specialization—which, like \cite{Haas2},  realizes coarse-fine duality directly at the level of tree structures rather than merely mass partitions—is obtained by taking $S_0$ and $S_1$ to be Polish spaces of compact rooted measured $\mathbb{R}$--trees (or excursions coding such trees), and choosing measurable size maps $h_0,h_1$ (e.g.\ $h(\mathcal T)=\mu(\mathcal T)$) together with $\sigma$--finite measures $\mathbf{n}_0,\mathbf{n}_1$ so that the pushforwards $\mathbf{n}_j\circ h_j^{-1}$ match the relevant stable/PD L\'evy intensities (hence, after projection to masses, reproduce the \emph{laws} of the coarse and fine Poisson--Dirichlet spinal mass partitions in \cite[Corollary~10]{Haas2}). In this setting, the coupled PRM constructor yields a pathwise coarse--fine coupling at the level of tree-marked point clouds. At the cloud level, the coarse--fine duality is realized by deterministic transformations of the same parent-indexed point cloud: coagulation groups child tree-marks into $\ell$-packets, while fragmentation opens each packet back into its constituent marks. The resulting coarse and fine tree-valued objects are then the images of these transformed clouds under a measurable spine-gluing map (geometry layer), with the Poisson--Dirichlet structure entering through the mixed-Poisson sampling layer via $h_0,h_1$. This contrasts with the Haas--Pitman--Winkel construction, which starts from a single stable CRT and \emph{decomposes} it along a distinguished spine; here we instead start from a parent-indexed PRM \emph{cloud of tree-marks} and \emph{assemble} them via the gluing map. After projection to masses, the induced coarse/fine spinal partitions agree in law with \cite[Corollary~10]{Haas2}; we emphasize that our goal is \emph{not} to identify the resulting tree-valued objects with the
particular HPW stable-CRT realization, unless one imposes additional calibration of the mark laws and the spine-gluing map to enforce such an identification. By way of analogy, the mere appearance of $\mathrm{PD}(1/2,0)$ laws for ranked excursion lengths does not characterize Brownian motion
among strong Markov processes: the same Poisson--Dirichlet length statistics can arise from different excursion-shape (It\^o measure) specifications. Likewise, matching the induced Poisson--Dirichlet spinal mass laws fixes only the ``mass layer'' of our construction; the tree-valued geometry remains dependent on the choice of mark laws and the spine-gluing map. Regardless of the specific choice of mark laws and spine-gluing map, we obtain a four-component simultaneous structural duality coupled system on trees that can be lifted to continuous time as in Section~\ref{sec:DynamicCouplin}. 

\begin{rem}[Geometric interpretation]
In general, a refined geometric understanding—namely, expressing the coagulation/fragmentation operations directly in terms of transformations of the underlying parent-indexed point clouds on the native spaces $S_j$, and identifying observables $h_j$ with natural geometric or analytic meaning in specific models—requires additional, model-specific work that lies beyond the scope of the present article.
\end{rem}

\subsubsection{Applied instantiation (ARG/TMRCA)}\label{sec:hARG}
Beyond such geometric settings, and more directly relevant to continuous-time population genetics, consider $(S_j, \mathcal{S}_j, \mathbf{n}_j)$ as a space
of labeled genealogical trees or ancestral recombination graphs (ARGs), where $\mathbf{e}_{j,k,l}$ represents a specific genealogical structure. Rather than
specifying complex L\'evy densities, the modeler can work with interpretable functions $h_j$: for instance, $h_j(\mathbf{e}_{j,k,l})$ could be the total
branch length (relevant for mutation accumulation), the time to most recent common ancestor (TMRCA), the number of recombination events, or a measure of
tree imbalance. This function-based specification translates biological assumptions—such as "lineages with longer branches contribute more
variation"—directly into the model's sampling mechanism, bypassing the indirect task of reverse-engineering the corresponding L\'evy measure. The framework then
automatically generates the coupled four-component system with tractable joint EPPFs, providing both the genealogical structure and its dual fragmentation
dynamics. 

\subsection{The skeletal principle: inducing duality on derived objects}\label{subsec:skeletal}

An instructive feature of the stable-tree instantiation in Section~\ref{subsec:stable_tree} is that the trees themselves are not PRM atoms—they are \emph{assembled} from atom-indexed marks via a spine-gluing map, in the spirit of Haas, Pitman, and Winkel's spinal decompositions \cite{Haas2}. A similar principle underlies the Qian-Werner coupling of Brownian loop soups \cite{QianWerner2019}. These constructions illustrate a general pattern that extends well beyond objects directly built from PRM atoms: if you know how to build one component from a PRM, its coarse and fine companions—obeying the cloud-based simultaneous structural duality in lockstep—are already facilitated by the duality in the PRM skeleton, with the geometric design features remaining degrees of freedom for the builder.

Many stochastic objects on abstract spaces $S_j$—while not themselves point clouds—are either \emph{purely functionally determined} by an underlying PRM, or are determined by a PRM \emph{together with auxiliary structure} (such as local times, gluing maps, or spatial scaffolding) that governs how atoms are assembled. For such objects, our cloud duality framework provides a \emph{rigid but highly flexible skeletal construction}: the duality architecture is inherited automatically from the PRM level, with the induced duality on the derived space arising as a deterministic projection.

More precisely, suppose $\mathcal{X}_j$ is constructed from PRM atoms $(\mathbf{e}_{j,k})_{k \geq 1}$ on $S_j$—either as a direct measurable functional $\mathcal{X}_j = \Phi_j\bigl((\mathbf{e}_{j,k})_{k \geq 1}\bigr)$, or via such a functional composed with auxiliary processes that provide temporal or spatial scaffolding. Then the four-component coupled system of Theorem~\ref{coupledsupertheorem} immediately induces a coupled system $(\mathcal{X}^{\mathrm{fine}}_j, \mathcal{X}^{\mathrm{coarse}}_j)$ on the image space, with:
\begin{itemize}
    \item Coarse and fine companions evolving in lockstep under the cloud-based simultaneous structural duality;
    \item Joint laws computable via the compound Poisson calculus;
    \item Continuous-time coupled dynamics inherited from the lift $dt \otimes \nu$.
\end{itemize}

\begin{exam}[Coupled Bessel processes with PD-CRP counts]\label{exam:bessel_skeletal}
The squared Bessel process of dimension $\delta > 0$ admits a classical representation via its excursion point process—a PRM on path space with intensity governed by the It\^o excursion measure—together with the local time process at zero, which provides the temporal scaffolding along which excursions are concatenated. For dimensions $0 < \delta_2 < \delta_1$, coupling two Bessel processes $R^{(\delta_1)}$ and $R^{(\delta_2)}$ via shared excursion PRMs and compatible local times (with appropriate thinning of excursions) yields nested partition structures on excursion counts. The induced partitions follow Chinese Restaurant Process dynamics with $\mathrm{PD}(\delta_i/2, 0)$ asymptotics, and the joint EPPFs are computable from the stable-case formulas of Section~\ref{sec:tagged_fragmentation_example}. This illustrates how the duality framework applies to excursion-based constructions; the classical work of Pitman and Yor on Bessel process decompositions \citep{PitmanYor1982BesselBridges, PY97} provides natural reference points for specialists pursuing rigorous geometric identification.
\end{exam}

\begin{exam}[Coupled Brownian loop soups]\label{exam:loop_soup_skeletal}
The Brownian loop soup exemplifies a structurally simpler situation: the derived object \emph{is} the PRM itself—requiring no gluing or scaffolding—though the atoms live on an exotic space of continuous paths. The loop soup on a domain $D \subset \mathbb{R}^d$, introduced by Lawler and Werner~\cite{LawlerWerner2004}, is a Poisson point process on the space of unrooted loops with intensity $\alpha \cdot \mu^{\mathrm{loop}}_D$, where $\mu^{\mathrm{loop}}_D$ is the (infinite, $\sigma$-finite) Brownian loop measure. Though each atom is an entire continuous path—not a point in the classical sense—the soup itself is a PRM on the Polish space of loops, and the skeletal principle applies directly.

For intensities $\alpha_2 < \alpha_1$, our framework yields coupled soups $(\mathcal{L}^{[\alpha_1]}, \mathcal{L}^{[\alpha_2]})$ satisfying $\mathcal{L}^{[\alpha_2]} \subset \mathcal{L}^{[\alpha_1]}$ almost surely, with the ``missing'' loops $\mathcal{L}^{[\alpha_1]} \setminus \mathcal{L}^{[\alpha_2]}$ forming an independent soup of intensity $(\alpha_1 - \alpha_2)$. The cluster structure of loops—which loops intersect, which together bound a given region, which contribute to a given local time—then inherits a nested partition structure from the cloud level, with explicit joint laws computable via compound Poisson calculus. This illustrates how the duality framework applies to loop-soup constructions, providing a ``mashup'' perspective that complements the elegant coupling constructed by Qian and Werner \citep{QianWerner2019}.
\end{exam}

The skeletal principle thus effects a clean separation of concerns. The \emph{geometric wiring}—how objects on $S_j$ are constructed from PRM atoms, and what auxiliary structure (if any) governs their assembly—is application-specific and may require substantial domain expertise: excursion theory and local times for Bessel processes, the loop measure for Brownian soups, spine-gluing maps for tree-valued processes. But once this wiring is specified, the \emph{duality architecture} is inherited automatically from the cloud level. The cloud duality framework provides the skeleton; the application supplies the flesh.

\begin{rem}[Scope of the cloud duality principle]\label{rem:scope_cloud}
Our results supply the duality architecture and Poisson calculus, with direct uplift to continuous-time dynamics; the geometric wiring of specific models, and probabilistic elements both within and beyond our framework, remain degrees of freedom for the domain expert. The scope of this generative principle is thus left to the imagination and utility of the user.
\end{rem}

%
%

\begin{funding}
This work was supported in part by RGC-GRF grants 16301521 of the HKSAR.
\end{funding}






\bibliographystyle{imsart-number} 
\bibliography{bibliographyfullupdated}

\end{document}